\newcommand{\R}{\mathbb{R}}
\newcommand{\N}{\mathbb{N}}
\newcommand{\Z}{\mathbb{Z}}
\newcommand{\C}{\mathbb{C}}
\newcommand{\LL}{\mathcal{L}}
\newcommand{\GL}{\mathcal{G}}
\newcommand{\EL}{\mathcal{E}}
\newcommand{\XL}{\mathcal{X}}
\renewcommand{\epsilon}{\varepsilon}
\renewcommand{\rho}{\varrho}
\renewcommand{\d}{\ensuremath{\,\mathrm{d}}}
\DeclareMathOperator{\intM}{intM}
\newcommand{\transpose}{\intercal}
\DeclarePairedDelimiter{\abs}{\lvert}{\rvert}
\newcommand{\norm}{%
    \@ifstar
    \normStar%
    \normNoStar%
}
\newcommand{\set}{%
    \@ifstar
    \setStar%
    \setNoStar%
}
\newcommand{\normNoStar}[2][]{\lVert #2 \rVert_{#1}}
\newcommand{\normStar}[2][]{\left\lVert #2 \right\rVert_{#1}}
\newcommand{\restrict}[2]{\left . #1 \right\rvert_{#2}}
\newcommand{\setStar}[2][]{
    \left\{#2%
    \ifthenelse{\equal{#1}{}}{}{\ \middle\vert \ #1}%
    \right\}%
}
\newcommand{\setNoStar}[2][]{
    \{#2%
    \ifthenelse{\equal{#1}{}}{}{\mathrel{\vert}#1}
    \}
}
\newcommand{\difference}[1]{\Delta_{#1}}
\newcommand{\Qp}{\ensuremath{Q^{(p)}}}
\newcommand{\Qpt}{\ensuremath{\widetilde Q^{(p)}}}
\newcommand{\Rpt}{\ensuremath{\widetilde R^{(p)}}}
\newcommand{\Gp}{\ensuremath{\GL^{(p)}}}
\newcommand{\numberthis}{\refstepcounter{equation}\tag{\theequation}}
\newlength{\adjustedalignmentskip}
\DeclareRobustCommand{\adjustedalignment}[3][1]{
    \setlength{\adjustedalignmentskip}{\widthof{$\displaystyle#2x$}- \widthof{$\displaystyle#3x$}}%
    \setlength{\adjustedalignmentskip}{#1\adjustedalignmentskip}%
    \hspace{\adjustedalignmentskip}#3%
}
\newcommand{\Mpq}[1][p,q]{\ensuremath{\intM^{\left(#1\right)}}}
\newcommand{\Mpz}{\Mpq[p,2]}
\renewcommand{\[}{\begin{equation*}}
\renewcommand{\]}{\end{equation*}}
\theoremstyle{plain}
\newtheorem{lemma}{Lemma}[section]
\newtheorem{theorem}[lemma]{Theorem}
\newtheorem{proposition}[lemma]{Proposition}
\newtheorem{corollary}[lemma]{Corollary}
\theoremstyle{definition}
\newtheorem{remark}[lemma]{Remark}
\newcommand{\mf}{{\frac 5 2}}
\author{
    Daniel Steenebr\"ugge\\
        RWTH Aachen University, Institut für Mathematik\\
        Templergraben 55, 52062 Aachen, Germany.\\
        \href{mailto:steenebruegge@instmath.rwth-aachen.de}{steenebruegge@instmath.rwth-aachen.de}\\
        \url{www.instmath.rwth-aachen.de/~steenebruegge/home}
    \and
    Nicole Vorderobermeier\\
        Universität Salzburg, Fachbereich Mathematik\\
        Hellbrunner Strasse 34, 5020 Salzburg, Austria.\\
        \href{mailto:nicole.vorderobermeier@sbg.ac.at}{nicole.vorderobermeier@sbg.ac.at}\\
        \url{uni-salzburg.at/index.php?id=209722}
}
\newcommand{\subjclass}[2][2020]{%
	\let\@oldtitle\@title%
	\gdef\@title{\@oldtitle\footnotetext{#1 \emph{Mathematics Subject Classification.} #2}}%
}
\newcommand{\keywords}[1]{%
	\let\@@oldtitle\@title%
	\gdef\@title{\@@oldtitle\footnotetext{\emph{Key words and phrases.} #1.}}%
}
\begin{document}
	\title{On the Analyticity of Critical Points of the Generalized Integral Menger Curvature in the Hilbert Case}
		
	\subjclass{35A20, 35A10, 35B65, 57K10}
	\keywords{Analyticity, knot energy, generalized integral Menger curvature, method of majorants, fractional Leibniz rule, bootstrapping}
	\maketitle
	
	    \begin{abstract}
		We prove the analyticity of smooth critical points for generalized integral Menger curvature energies 
		$\textnormal{intM}^{(p,2)}$, with $p \in (\tfrac 73, \tfrac 83)$, subject to a fixed length constraint. 
		This implies, together with already well-known regularity results, that finite-energy, critical $C^1$-curves $\gamma: \R/\Z \to \R^n$ of generalized integral Menger curvature $\textnormal{intM}^{(p,2)}$ subject to a fixed length constraint are not only $C^\infty$ but also analytic. 
		Our approach is inspired by analyticity results on critical points for O'Hara's knot energies based on Cauchy's method of majorants and a decomposition of the first variation.
		The main new idea is an additional iteration in the recursive estimate of the derivatives 
		to obtain a sufficient difference in the order of regularity.
	\end{abstract}

\section{Introduction}

An easy way to produce a knot is to tie a piece of string and glue the ends together. This motivates the mathematical definition of a knot as a continuous embedding of a circle into three-dimensional Euclidean space. Depending on the initial knotting, different knot types arise. We say that two knots are of the same knot class if there exists an ambient isotopy \cite[p.~4f.]{Cromwell:2004} that deforms one knot to the other avoiding any sort of self-intersections or ``cutting and glueing'' phenomena. 
In the following we aim to investigate the regularity of optimal shapes for knots within their knot classes. 

Lead by the question of determining optimal representatives for given polygonal knot classes, Fukuhara \cite{Fukuhara:1988:EnergyKnot} introduced the idea to apply some sort of self-repelling potentials to given knots and follow their gradient flow towards the local minimizer. Based on that concept, O'Hara later established the notion of knot energies by defining them as real-valued functionals on the space of knots that are bounded from below and self-repulsive, i.e.~the energy blows up on a sequence of knots that converges to a curve with self-intersections \cite{OHara:2003:EnergyKnotsConformGeom}.

O'Hara was also the first to suggest such a potential knot energy for actual knots \cite{OHara:1991:EnergyKnot}, motivated from the electrostatic potential energy, in particular the self-avoidance effects described by Coulomb's law. Later, this energy was named Möbius energy due to its invariance with respect to Möbius transformations shown by Freedman, He and Wang \cite{Freedmanetal:1994:MoebiusEnergy}. 

Motivated by applications to molecular biology, Gonzalez and Maddocks elaborated a completely different approach to define knot energies in search of a new notion for thickness of knots \cite{GonzalesMaddocks:1999:GlobalCurvature}.
For any continuous, closed, and rectifiable curve $\gamma:\R/\Z \rightarrow \R^3$, where $\R /\Z \cong S^1$ denotes the circle of length 1, they characterized the thickness of  $\gamma$  by
\begin{align*}
\Delta (\gamma) = \inf_{x,y,z\in \gamma } R(x,y,z) ,
\end{align*}
where $R(x,y,z)$ stands for unique circumcircle radius of three points $x,y,z\in \R^n$ given by 
\begin{align*}
R(x,y,z) = \frac{|y-z|\, |y-x| \, |z-x|}{2 |(y-x)\wedge (z-x)|} = \frac{|y-z|}{2 \sin \measuredangle (y-x,z-x)}.
\end{align*}
Observe that $\Delta(\gamma) = 0$ if $\gamma$ intersects itself. Therefore, the quotient  $\mathcal{L}(\gamma) / \Delta(\gamma)$ of length $\mathcal{L}(\gamma)$ over thickness $\Delta(\gamma)$ called \emph{ropelength} serves as a starting point for new self-repelling potential energies.
We remark that thickness is related to the
local radius of curvature since the circumcircle of three points $x,y,z$ on a sufficiently regular curve $\gamma$ converges to the osculating circle at $x$ for $y$ and $z$ converging to $ x$ on $\gamma$. The radius of the osculating circle at $x$ on $\gamma$  is the inverse of the local curvature. 

The ropelength imposes technical challenges as it consists of operators that are hard to differentiate. These drawbacks can be softened as suggested in \cite{GonzalesMaddocks:1999:GlobalCurvature} by exchanging point-wise maximization with integration, which leads to various \emph{integral Menger\footnote{Named after the Austrian-American mathematician Karl Menger (1902-1985) who used the radius of circumscribed circles on a curve to generalize the concept of curvature to metric spaces \cite{Menger:1930:UntersuchungenUeberAllgemeineMetrik}} curvature energies}
such as
\begin{align}\label{def:intMengcurv}
\mathcal{M}^{p}(\gamma) =   \iiint_{(\R/\Z)^3} \frac{|\gamma'(r)| \, |\gamma'(s)| \, |\gamma'(t)|}{R(\gamma(r),\gamma(s), \gamma(t))^p} \d r \d s\d t.
\end{align}
for any $p>0$. Note that for curves parametrized by arc-length $\lim_{p\rightarrow \infty} \mathcal{M}_p^{1/p} (\gamma) = \frac{1}{\Delta(\gamma)}$. Other variants between ropelength and $\mathcal M^p$ can be found in \cite{GonzalesMaddocks:1999:GlobalCurvature}.

For $p>3$, these functionals are indeed knot energies according to O'Hara's definition, as Strzelecki, Szumańska and von der Mosel showed in \cite{StrzeleckiSzumanskavonderMosel:2013:KnotEnergiesInvolvMengCurv}. Furthermore, they proved a geometric Morrey-Sobolev imbedding in three dimensions, in particular that finite energy of an arc-length parametrized curve implies $C^{1,1-\frac 3p}$-regularity and its image is $C^1$-diffeomorphic to the circle \cite{StrzeleckiSzumanskavonderMosel:2010:RegularizingandselfavoidanceeffectsofintegralMengercurvature}. Blatt completed the characterization of energy spaces in \cite{Blatt:2013:NoteMenger}. Generalizations of the integral Menger curvature to higher-dimensional objects can be found for example in \cite{StrzeleckivonderMosel:2009:IntegralMengercurvatureforsurfaces,BlattKolasinski:2012:SharpboundednessandregularizingeffectsoftheintegralMengercurvatureforsubmanifolds, Kolasinski:2014:GeometricSobolevlikeembeddingusinghighdimensionalMengerlikecurvature}.
For further information on the integral Menger curvature energies  we refer the reader for example to the survey paper \cite{StrzekeckivonderMosel:2013:MengerSurvey}.
 
In order to tackle the main question of the present paper, the regularity of optimal shapes for knots, we first need to choose a suitable knot energy and then conduct regularity studies on its stationary points. 
Now if we chose for this purpose the integral Menger curvature energies $\mathcal{M}^p$, $p>3$, we would arrive at nonlinear, non-local and degenerate Euler-Lagrange operators. A formula for the first variation can be found in \cite{Hermes:2012:Menger} and for its second and third variation in \cite{Knappmann:2020:Menger}. Hence, to produce non-degenerate energies from the integral Menger curvature energies given in \eqref{def:intMengcurv}, Blatt and Reiter \cite{BlattReiter:2015:Menger} suggested to generalize $\mathcal{M}^p$ to

\begin{align}\label{def:GenIntMengCurv}
\textnormal{intM}^{(p,q)}(\gamma) =   \iiint_{(\R/\Z)^3} \frac{|\gamma'(u_1)| \, |\gamma'(u_2)| \, |\gamma'(u_3)|}{R^{(p,q)}(\gamma(u_1),\gamma(u_2), \gamma(u_3))} \d u_1 \d u_2 \d u_3, 
\end{align}
where they decoupled the circumcircle radius to
\begin{align*}
R^{(p,q)}(x,y,z) = \frac{(|y-z|\, |y-x| \, |z-x|)^p}{|(y-x)\wedge (z-x)|^q} = \frac{|y-z|^p |y-x|^{p-q} |z-x|^{p-q}}{ \sin \measuredangle (y-x,z-x)^q}
\end{align*} 
for any $p,q>0$. Observe that $\mathcal{M}^{p}(\gamma) = 2^p \textnormal{intM}^{(p,p)}(\gamma) $. According to \cite{BlattReiter:2015:Menger}, these energies are well-defined knot energies for $p\geq \tfrac 23 q + 1$ and give finite energy values on closed curves for $p< q + \tfrac 23$.  Moreover, minimizers exist within every knot class for $p\in (\tfrac 23 q +1,q+\tfrac 23)$ and $q>1$. 

For  the particular case $q=2$ and therefore $p\in (\tfrac 73, \tfrac 83)$, the generalized integral Menger curvature energies in \eqref{def:GenIntMengCurv} indeed lead to non-degenerate Euler-Lagrange equations, for which Blatt and Reiter proved the following regularity result by a combination of potential estimates and Sobolev-embeddings.

\begin{theorem}\label{thm:smooth}\cite[Theorem~4]{BlattReiter:2015:Menger}
	For $p\in (\tfrac 73, \tfrac 83)$, let $\gamma :\R/\Z\rightarrow \R^n$ be a simple curve in $W^{\frac 32 p -2,2}(\R/\Z,\R^n)$ parametrized by arc-length. 
	If $\gamma$ is a stationary point of the generalized integral Menger curvature  $\Mpz$ 
	with respect to fixed length,
	then $\gamma\in C^\infty(\R/\Z,\R^n)$. 
\end{theorem}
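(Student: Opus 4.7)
The plan is to prove smoothness of $\gamma$ by a bootstrap argument applied to the Euler--Lagrange equation of $\Mpz$ under the arc-length constraint. First, I would compute the first variation of $\Mpz$ at $\gamma$ tested against admissible variations $\varphi \in C^\infty(\R/\Z,\R^n)$, incorporating a Lagrange multiplier for the length constraint. Since $\gamma$ is arc-length parametrized and stationary, the Euler--Lagrange equation takes the schematic form
\[
L\gamma = N(\gamma) + \lambda\,\kappa_\gamma,
\]
where $L$ is a nonlocal pseudodifferential operator whose order matches the scaling of the natural energy space $W^{(3p-4)/2,2}$, $N(\gamma)$ collects nonlocal nonlinear remainders of strictly lower differential order in $\gamma$, and $\lambda\,\kappa_\gamma$ is the Lagrange multiplier contribution enforcing the fixed length.

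The analytic backbone is a decomposition of the first variation into a principal linear part that behaves like a fractional Laplacian acting on $\gamma$ (modulo the tangential degeneracy coming from reparametrization invariance, which one handles by projecting to normal variations) plus a remainder $N(\gamma)$ of strictly lower differential order than $L\gamma$. Using Fourier analysis on $\R/\Z$, $L$ is invertible up to finitely many low Fourier modes, with inverse smoothing by precisely the order it differentiates. Hence, symbolically,
\[
\gamma = L^{-1}\bigl(N(\gamma) + \lambda\,\kappa_\gamma\bigr) + (\text{low-frequency correction}).
\]

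Starting from the initial regularity $\gamma \in W^{(3p-4)/2,2}$ guaranteed by finiteness of $\Mpz(\gamma)$ and Blatt's characterization of the energy space, the bootstrap then reads as follows. Assume inductively $\gamma \in W^{s,2}$ with $s \geq (3p-4)/2$. A fractional Leibniz rule together with careful product and kernel estimates for the nonlocal integrals defining $N$ would yield $N(\gamma) \in W^{s-\alpha,2}$ for some $\alpha$ strictly less than the order of $L$. Applying $L^{-1}$ then upgrades $\gamma$ to $W^{s+\delta,2}$ for a fixed gain $\delta > 0$ depending only on $p$. Iterating this step and invoking the Sobolev embedding yields $\gamma \in C^\infty(\R/\Z,\R^n)$.

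The main obstacle is the gain estimate in the bootstrap step, namely showing that $N(\gamma)$ is genuinely of strictly lower order than $L\gamma$ with a uniform quantitative gap $\delta > 0$. This requires isolating the most singular derivative configurations inside the nonlocal integrals coming from differentiating $R^{(p,2)}$, exploiting the cancellations produced by arc-length parametrization and by the permutation symmetry of $R^{(p,2)}$ in its three arguments, and carefully tracking which factor carries the worst derivative in a fractional product rule. A secondary technical point is controlling the Lagrange multiplier $\lambda$ in terms of current norms of $\gamma$ and showing that $\lambda\,\kappa_\gamma$ inherits the regularity of $\gamma$, so that the constraint term neither stalls nor disturbs the iteration.
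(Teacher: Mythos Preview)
This theorem is not proved in the present paper; it is quoted from \cite[Theorem~4]{BlattReiter:2015:Menger} and used as input for the analyticity result. The only description the paper gives of the original proof is that it proceeds ``by a combination of potential estimates and Sobolev-embeddings.'' So there is no proof here to compare your proposal against in detail.

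That said, your outline is essentially the strategy of Blatt--Reiter, and the machinery the present paper reviews in Sections~\ref{sec:Decomposition}--\ref{sec:RemainderTermNonCritical} makes the skeleton of that strategy concrete: the first variation splits as $\delta\Mpz(\gamma,h) = 12(\Qp(\gamma,h) + \Rp(\gamma,h))$, where $\Qp$ is a Fourier multiplier of order $3p-4$ (your operator $L$), $\Rp$ is the lower-order remainder (your $N(\gamma)$), and the Lagrange multiplier contribution is simply $\lambda\gamma''$ in arc-length parametrization, so no projection to normal variations is needed. The gap you flag as the ``main obstacle'' --- showing $\Rp$ is strictly lower order with a uniform gain --- is exactly the content of \cite[Lemma~4.2 and Lemma~4.4]{BlattReiter:2015:Menger}, and the paper revisits these in Lemmas~\ref{lemma:formRemainder} and~\ref{lemma:regularityIntegrandRemainder}. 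Your sketch is therefore a faithful high-level description of the Blatt--Reiter argument, but since the present paper offers no proof of its own for this statement, there is nothing further to compare.
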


Furthermore, Blatt and Reiter characterized $C^1$-curves of finite generalized integral Menger curvature energy by curves in the fractional Sobolev space $W^{\frac {3p-2}{q} -1,q}(\R /\Z,\R^n)$ \cite[Theorem~1]{BlattReiter:2015:Menger}.
This result, combined with \thref{thm:smooth}, raises the question whether stationary points of finite generalized integral Menger curvature energies are not only smooth but also analytic. 

For the Möbius energy the statement was elaborated by \cite{BlattVorderobermeier:2019:CriticalMoebius} based on the corresponding smoothness result established by \cite{BlattReiterSchikorra:2016:CriticalPointsOfMoebiusEnergyAreSmooth}.  The same holds for a non-degenerate range of O'Hara's energies as shown in \cite{Vorderobermeier:2019:CriticalOHara} based on \cite{BlattReiter:2013:StationaryPointsOfOHarasKnotEnergies}. Hence it seems reasonable that it is possible to transfer the techniques to the non-degenerate range of energies $\Mpz$ for $p\in(\tfrac 73,\tfrac 83)$.

The main method used in these papers to show analyticity is based on the method of majorants, motivated by the proof of Cauchy-Kovalevsky's theorem \cite{Kovalevskaya:1875:ZurTheoriederpartiellenDifferentialgleichungen}, see also \cite[pp.~17--33]{Cauchy:1882:OEuvrescompletesdAugustinCauchy}. For applying this technique, it is necessary to establish a recursive estimate for derivatives of critical curves.
It turns out that in such estimates, there appears 
a fractional derivative of the product of two functions, which subsequently leads to a fractional Leibniz rule. Unfortunately, in contrast to the bilinear Hilbert transform in \cite{BlattVorderobermeier:2019:CriticalMoebius} and fractional Leibniz rule in \cite{Vorderobermeier:2019:CriticalOHara}, in our case this recursive estimate does not produce a sufficient difference in the order of differentiability, which is why we repeat it to obtain a difference of order $1$. 
This repetition seems to lead to Gevrey regularity of order $2$ for critical curves, which is weaker than analyticity, cf. \cite[Definiton 4.1.19]{KrantzParks:2002:Aprimerofrealanalyticfunctions} or \cite[Chapter~1.4]{Rodino:1993:LinearPartialDiffOperatorInGevreySpaces},
but we are able to confirm the analyticity in the affirmative:

\begin{theorem}\label{thm:main}
	For $p\in (\tfrac 73, \tfrac 83)$, let $\gamma :\R/\Z\rightarrow \R^n$ be a simple curve in $C^\infty(\R/\Z,\R^n)$ parametrized by arc-length. 
	If $\gamma$ is a critical point of the generalized integral Menger curvature  $\Mpz$ 
	with respect to fixed length,
	then the curve $\gamma$ is analytic. 
\end{theorem}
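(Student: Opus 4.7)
The plan is to bootstrap from Theorem~\ref{thm:smooth} to analyticity by means of Cauchy's method of majorants: analyticity of $\gamma$ is equivalent to the existence of constants $C,M>0$ such that $\|\gamma^{(k)}\|_{L^\infty(\R/\Z)}\leq C\,M^{k}\,k!$ for every $k\in\N$, and this factorial growth of derivatives is the target to be established. Following the blueprint developed for the Möbius energy in \cite{BlattVorderobermeier:2019:CriticalMoebius} and for O'Hara's family in \cite{Vorderobermeier:2019:CriticalOHara}, I would first rewrite the criticality condition by decomposing the first variation into a nonlocal, translation-invariant principal operator $Q^{(p)}$ (of order in $(3,4)$, essentially a fractional Laplacian-type operator built from the symmetrized difference quotients appearing in $\Mpz$) applied to $\gamma$, plus lower-order nonlinear remainder terms and the Lagrange multiplier associated with the length constraint, schematically
\[
    Q^{(p)}\gamma = F(\gamma) + \lambda\,\gamma''.
\]

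The core recursive step is then to differentiate this identity $k$ times, exploit the commutation with $Q^{(p)}$, invert $Q^{(p)}$ in a suitable fractional Sobolev space, and apply a Kato--Ponce type fractional Leibniz rule to $\partial_x^k F(\gamma)$. This distributes the $k$ derivatives across the nonlinear factors and produces an inequality controlling $\|\gamma^{(k)}\|$, in an appropriate norm, by sums over partitions of $k$ of products of lower-order derivative norms weighted by binomial coefficients. This is where the principal obstacle flagged in the introduction materialises: whereas in the Möbius and O'Hara settings the effective regularity gap between the left-hand side and each factor on the right is at least $1$ (which is exactly what closes the majorant comparison), the natural gap produced by the operators associated to $\Mpz$ in the range $p\in(\tfrac{7}{3},\tfrac{8}{3})$ is strictly less than $1$, and a straightforward induction would only yield Gevrey-$2$ regularity.

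The key new idea is to iterate the recursive estimate once more: the highest-order factor appearing on the right is re-bounded by applying the same recursion, and a careful accounting of the resulting double partition sums, together with the mapping properties of $Q^{(p)}$, recovers an effective regularity gap of at least $1$ for every factor in the final inequality. With that strengthened recursion in place, $\|\gamma^{(k)}\|$ can be compared with the Taylor coefficients of an analytic majorant --- a solution of a scalar model inequality of the same combinatorial shape --- and induction on $k$ yields the bound $C\,M^{k}\,k!$, hence analyticity. The main difficulties I anticipate are, first, extracting $Q^{(p)}$ and $F$ explicitly from the first variation of $\Mpz$ with mapping properties sharp enough in our range of $p$ to make the fractional Leibniz rule applicable, and second, verifying that the combinatorial coefficients surviving the iterated recursion remain compatible with an \emph{analytic} --- not merely Gevrey --- majorant, since any uncontrolled growth of these coefficients would reintroduce the Gevrey-type loss the iteration is designed to overcome.
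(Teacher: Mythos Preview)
Your proposal is correct and follows essentially the same approach as the paper: the decomposition $\Qpt(\gamma) = \tfrac{\lambda}{12}\gamma'' - \Rpt(\gamma)$, the fractional Leibniz rule for the remainder, the one additional iteration of the recursive estimate to push the sub-unit regularity gap up to order $1$, and the comparison with an analytic majorant obtained via Cauchy--Kovalevsky are exactly the ingredients used. The only minor deviations are that the paper works with $H^{5/2}$-norms rather than $L^\infty$, builds a tailored Leibniz rule (Theorem~\ref{thm:fracLeibnizrule}) rather than invoking Kato--Ponce, and realizes the majorant concretely as the solution of a second-order ODE whose right-hand side is a composition of two explicit functions mirroring the two nested $\Phi$-layers of the iterated recursion.
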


A closer look at the proof of the main theorem reveals that smoothness of $\gamma$ is actually not needed. It suffices to assume that $\norm[H^{\frac 5 2}]{\gamma^{(i)}}$ is finite for $i \in \set{0,1,2}$, i.e. $\gamma \in H^{4 + \frac 1 2}(\R/\Z,\R^n)$. All additional regularity is obtained by an inductive argument.

However, by the characterization of the energy spaces in \cite[Theorem~1]{BlattReiter:2015:Menger} and the regularity result \thref{thm:smooth}, finite energy $\Mpz$ of a critical point implies its smoothness already. We conclude:

\begin{corollary}\label{cor:main}
	Let  $\gamma :\R/\Z\rightarrow \R^n$ be a simple curve in $C^1(\R/\Z,\R^n)$ parametrized by arc-length  with $\Mpz (\gamma) < \infty$ for $p\in(\tfrac 73, \tfrac 83)$.
	If $\gamma$ is a critical point of $\Mpz$ with respect to fixed length, then the curve $\gamma$ is analytic.
\end{corollary}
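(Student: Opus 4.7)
The strategy is to chain together the energy-space characterization of \cite{BlattReiter:2015:Menger} with \thref{thm:smooth} and \thref{thm:main}; once the curve is known to be smooth, the main theorem supplies analyticity directly, so the whole task reduces to bootstrapping from the mere hypotheses ``$C^1$ with finite energy'' up to smoothness.

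First, I would invoke \cite[Theorem~1]{BlattReiter:2015:Menger} in the case $q=2$: it characterizes $C^1$-curves with $\Mpz(\gamma) < \infty$ as precisely those lying in the fractional Sobolev space
\[
W^{\frac{3p-2}{2}-1,\,2}(\R/\Z,\R^n) = W^{\frac{3p}{2}-2,\,2}(\R/\Z,\R^n).
\]
For $p \in (\tfrac 73,\tfrac 83)$ this is exactly the regularity hypothesis of \thref{thm:smooth}, so no additional embedding work is needed. Since by assumption $\gamma$ is a critical point with respect to fixed length, it is in particular a stationary point in the sense of \thref{thm:smooth}, and that theorem upgrades $\gamma$ to $C^\infty(\R/\Z,\R^n)$.

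With $\gamma$ now smooth, simple, arc-length parametrized, and still a critical point of $\Mpz$ under the fixed-length constraint, the hypotheses of \thref{thm:main} are met verbatim. Applying \thref{thm:main} yields that $\gamma$ is analytic, which is the desired conclusion.

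There is no real obstacle here: the corollary is a bookkeeping statement that combines the energy-space identification with the two regularity theorems, as the paragraph preceding the statement already foreshadows. All the substantive work—the bootstrap from smoothness to analyticity via the method of majorants, the decomposition of the first variation, and the doubled recursive estimate for a full unit of gained regularity—is contained in the proof of \thref{thm:main} itself.
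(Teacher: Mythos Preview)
Your proposal is correct and matches the paper's own reasoning: the corollary is stated as an immediate consequence of the energy-space characterization \cite[Theorem~1]{BlattReiter:2015:Menger}, \thref{thm:smooth}, and \thref{thm:main}, exactly as you outline. No separate proof is given beyond the sentence preceding the corollary.
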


One should be able to drop the $C^1$-assumption in the previous result by obtaining sufficient initial regularity via techniques that proved to be successful in showing a geometric Morrey-Sobolev imbedding for $\mathcal{M}^p$ with $p>3$, cf.~\cite[Theorem~1.2]{StrzeleckiSzumanskavonderMosel:2010:RegularizingandselfavoidanceeffectsofintegralMengercurvature}.

Further analyticity and Gevrey regularity results in the context of non-local differential equations can be found for example in \cite{DallAcquaEtAl:2012,DallAcquaEtAl:2014,AlbaneseEtAl:2015:GevreyRegularityForIntDiffOp,Blatt:2020:AnalyticityIntegroDiffOp,Blatt:2020:AnalyticityNonlinerEllipticPDE}.

\subsection*{Exposé of the present work}
As stated above, the central idea of our proof is the same as in the one for the Cauchy-Kovalevsky theorem and goes as follows.
A curve $\gamma \in C^\infty(\R/\Z,\R^n)$ is analytic if and only if we have constants $C>0$ and $r>0$ such that for all $l \in \N_0$
\[
	 \norm[L^\infty]{\gamma^{(l)}} \le C \frac {l!} {r^l}.
\]
Suppose we had a recursive estimate
\[
	\norm[L^\infty]{\gamma^{(l+1)}} \le \Phi(l;\norm[L^\infty]{\gamma},\ldots,\norm[L^\infty]{\gamma^{(l)}})
\]
and an analytic function $c:(-\epsilon, \epsilon) \to \R$ with
\[
	c^{(l+1)}(0) = \Phi(l;c(0),\ldots,c^{(l)}(0)) \text{ and } c(0) \ge \norm[L^\infty]{\gamma}.
\]
Then, by induction,
\[
	\begin{split}
		0 \le \norm[L^\infty]{\gamma^{(l+1)}} \le \Phi(l+1;c(0),\ldots,c^{(l)}(0)) = c^{(l+1)}(0) \le C \frac {(l+1)!} {r^{l+1}}
	\end{split}
\]
for all $l \in \N$ and so, $\gamma$ is analytic.

The largest part is devoted to obtaining the required recursive estimate.
In Section \ref{sec:Decomposition}, we review a decomposition of the first variation of $\Mpz$ given in \cite[Section~4]{BlattReiter:2015:Menger} into a highest- and lower-order terms called $\Qpt$ and $\Rpt$.
In a critical point, $\Qpt$ is equal to $-\Rpt$ and we may use the regularity gap between those to establish the recursive estimate.
To that end, Section \ref{sec:EstimateMainTerm} yields an estimate of $\partial^{l+3} \gamma$ by $\partial^l\Qpt(\gamma)$.
In Section \ref{sec:RemainderTermNonCritical} we bound $\partial^l \Rpt(\gamma)$ by $\partial^{l+2} \gamma$ via a fractional Leibniz rule.
Looking at the norms involved, it turns out that these estimates combined only lead to a regularity gain strictly between $\frac 1 2$ and $1$ which is not enough for the recursive estimate, so in Subsection \ref{subsec:RecursiveEstimate}, we iterate the above once more using the fact that $\gamma$ is a critical point.
The remainder of Section \ref{sec:MainProof} is dedicated to finding the analytic majorant $c$ which fulfils the same recursive estimate with equality.
Finally, Section \ref{sec:Consequences} discusses a few simple consequences of the analyticity of critical points, not only for $\Mpz$, in geometric knot theory.

We expect that our approach extends to functionals whose first variation admits a decomposition with properties similar to those of $\Qpt$ and $\Rpt$.
A probably valid generalization would be to assume only an arbitrarily small gain in regularity, see page \pageref{rem:smallerRegularityGain}.

\subsection*{Notation}\label{subsec:Notation}
In the following we give a short introduction to the fractional Sobolev spaces we are using in this paper. First, we present a continuation of the usual periodic Sobolev spaces of integer order $W^{k,p}(\R/\Z, \R^n)$ to Sobolev–Slobodeckij spaces $W^{k+s,p}(\R/\Z, \R^n)$ for $0<s <1$. The latter are defined by
\[
	W^{k+s,p}(\R/\Z, \R^n) := \{ f\in W^{k,p}(\R/\Z, \R^n) \ | \ [f^{(k)}]_{W^{s,p}(\R/\Z, \R^n)} < \infty \}
\]
equipped with the norm 
\[
	\|f\|_{W^{k+s,p}(\R/\Z, \R^n)} := \|f\|_W^{k,p}(\R/\Z, \R^n) + [f^{(k)}]_{W^{s,p}(\R/\Z, \R^n)},
\]
where 
\[
	[f^{(k)}]_{W^{s,p}(\R/\Z, \R^n)} := \Big(\int_{\R/\Z} \int_{-1/2}^{1/2} \frac{|f^{(k)}(x) - f^{(k)}(y)|^p}{|x-y|^{1+ps}} \d x \d y \Big)^{\frac 1p}
\]
is the so-called Gagliardo seminorm of $k$-th derivative of $f$. 

Second, we consider Bessel potential spaces of order $s \geq 0$ on the circle $\R/\Z$ that are given by 
\begin{align*}
H^s(\R/\Z, \C^n) := \{f\in L^2 (\R/\Z, \C^n) \ | \ \| f \|_{H^s}:= \| f \|_{H^s(\R/\Z,\C^n)} := \sqrt{(f,f)_{H^s}} < \infty  \},
\end{align*}
equipped with the inner product  
\begin{align*}
(f,g)_{H^s} := (f,g)_{H^s(\R/\Z,\C^n)} := \sum_{k\in\Z} (1+k^2)^s \langle \widehat{f}(k), \widehat{g}(k) \rangle_{\C^n}.
\end{align*}
Here, $\hat f(k) = \int_0^1 e^{-2 \pi i k x} f(x) \d x$ is the $k$-th Fourier coefficient of $f$.
For any integer $k\geq 0$, the introduced space $H^k$ coincides with the classical Sobolev space $W^{k,2}$  and their norms $\|\cdot\|_{H^k}$ and $\|f\|_{W^{k,2}}:= (\sum_{\nu=0}^{k} \|f^{(\nu)}\|^2_{L^2})^{\frac{1}{2}}$ are equivalent (cf.~\cite[Lem.~1.2]{Reiter:2012:RepulsiveKnotEnergies} or \cite[7.62]{AdamsFournier:2003:SobolevSpaces}).
Moreover, Bessel potential spaces coincide with Sobolev–Slobodeckij spaces in the case of $p=2$, in particular, they have equivalent norms
as stated in \cite[Proposition~3.4]{DiNezzaetal:2012:HitchhikersGuide} for spaces on the domain $\R^d$. The proof can be modified to fit our situation.
It can be shown that $H^s(\R/\Z,\R^n)\subseteq H^t(\R/ \Z,\R^n)$ for any $0 \leq t<s$ (cf.~\cite[Chapter~4,~Proposition~3.4]{Taylor:1996:PartialDifferentialEquationsa}) as well as $H^s(\R/\Z,\R^n) \subseteq C(\R/\Z,\R^n)$ for any $s> \frac{1}{2}$ (cf.~\cite[Chapter~4,~Proposition 3.3]{Taylor:1996:PartialDifferentialEquationsa}).

Furthermore, $H^m$ satisfies the Banach algebra property for any $m\geq \frac 12$, i.e. there exists a positive constant $C = C(m)$ such that 
\begin{align}\label{prop:Banachalgebra}
\|fg\|_{H^m} \leq C \|f\|_{H^m} \|g\|_{H^m}
\end{align}
for all $f,g \in H^m(\R/\Z,\R)$  (cf.~\cite[Theor.~4.39]{AdamsFournier:2003:SobolevSpaces}).

For further information on fractional Sobolev spaces, we refer the reader for instance to \cite{AdamsFournier:2003:SobolevSpaces,DiNezzaetal:2012:HitchhikersGuide,RunstSickel:1996:SobolevSpacesOfFractionalOrder, Triebel:2010:TheoryOfFunctionSpaces}.

\newcommand{\G}[1][p]{\ensuremath{\mathcal G^{(#1)}}}
\newcommand{\T}[3][\g]{\triangle_{#2,#3}{#1}}
\newcommand{\g}{\gamma}
\newcommand{\dg}{{\gamma}'}
\newcommand{\br}[1]{\ensuremath{\left(#1\right)}}
\newcommand{\Wia}[1][(3p-2)/q-1,q]{\ensuremath{W_{\mathrm{ia}}^{#1}}}
\newcommand{\s}{\ensuremath{\sigma}}
\newcommand{\gp}{g^{p}}
\newcommand{\W}[1][(3p-2)/q-1,q]{\ensuremath{W^{\scriptstyle #1}}}

\section{Decomposition of the first variation}
\label{sec:Decomposition}

Hermes was the first to derive Gateaux differentiability and a formula for the first variation of the integral Menger curvature $\mathcal{M}_p =  2^p \textnormal{intM}^{(p,p)} $ for $p\geq 2$ in his PhD thesis \cite[Theorem\,2.33, Remark\,2.35]{Hermes:2012:Menger}. In \cite[Theorem\,3]{BlattReiter:2015:Menger}, Blatt and Reiter extended this result to the generalized integral Menger curvature $\textnormal{intM}^{(p,q)}$ for the sub-critical range $p\in (\tfrac 23 q+1, q+\tfrac 23)$ and $q>1$. However, they approached the problem differently to Hermes by taking advantage of studying a certain subdomain of integration as well as using the newly discovered characterization of energy spaces developed in \cite[Theorem\,1.1]{Blatt:2013:NoteMenger} and \cite[Theorem\,1]{BlattReiter:2015:Menger}. The latter states that injective curves in $C^1(\R/\Z,\R^n)$ parametrized by arc-length have finite generalized integral Menger curvature iff they belong to the fractional Sobolev space $W^{(3p-2)/q-1,q} (\R/\Z, \R^n)$ \cite[Theorem 1]{BlattReiter:2015:Menger}. 

To summarize Blatt and Reiter's findings regarding the first variation of  the generalized integral Menger curvature $\textnormal{intM}^{(p,q)}$, cf. \cite[Theorem 3]{BlattReiter:2015:Menger}, they showed that it is continuously differentiable on the subspace of all regular embedded $W^{(3p-2)/q-1,q}$-curves. Furthermore, they gave an explicit formula for the first variation  of $\textnormal{intM}^{(p,q)}$ at  any arc-length parametrized embedded $\gamma \in W^{(3p-2)/q-1,q}(\R/\Z, \R^n)$ in direction $h \in W^{(3p-2)/q-1,q}(\R/\Z, \R^n)$.

In the following we will restrict ourselves to the non-degenerate, sub-critical range of generalized integral Menger curvature, i.e. $q=2$ and $p\in(\tfrac 73, \tfrac 83)$. In \cite{BlattReiter:2015:Menger} it has turned out to be helpful in that case to decompose its first variation  into a term of highest order and lower order terms in order to study the regularity of critical points.

Hence we start with recapitulating a decomposition of the first variation introduced in \cite[(4.2) and Lemma 4.2]{BlattReiter:2015:Menger}. 
Note that we abbreviate
\[
\difference{x,y} f  := f(u+x) - f(u+y) 
\]
and 
\[
D:= \set*[w \leq 1 + 2v, v \geq -1 + 2w]{(v,w) \in \left(-\tfrac 1 2,0\right) \times \left(0,\tfrac 1 2\right)}.
\]
\sloppy
We observe for any arc-length parametrized embedded $\gamma \in W^{\frac 32 p-2,2}(\R/\Z, \R^n)$ and $h \in W^{\frac 32 p-2,2}(\R/\Z, \R^n)$ that
\[
\label{eq:firstVariation}
\numberthis
\delta \Mpz (\gamma,h) = 12(\Qp(\gamma,h) + R^{(p)} (\gamma,h)),
\]
\fussy
where the main term is given by
\[
\Qp(\gamma,h) := \iiint_{\R/\Z \times D}  \frac {\left< \frac {\difference{w,0} \gamma} {w} - \frac {\difference{v,0} \gamma} {v}, \frac {\difference{w,0} h} {w} - \frac {\difference{v,0} h} {v} \right>} {\abs{v-w}^p \abs{v}^{p-2} \abs{w}^{p-2}} \d w \d v \d u.
\]

The remainder term can be expressed as following. 
\begin{lemma}[\protecting{\cite[Lemma 4.2]{BlattReiter:2015:Menger}}]
	\label{lemma:formRemainder}
	The term $R^{(p)} (\gamma,h)$ is a finite sum of terms of the form 
	\[
	\iiint\limits_{\mathbb R / \mathbb Z \times D} \idotsint\limits_{[0,1]^K} g^p(u,v,w;s_{1},\dots,s_{K-2}) \otimes h'(u+s_{K-1}v + s_K w) 
	\d\theta_{1}\cdots\d\theta_{K}\d v \d w \d u
	\numberthis
	\label{lemma:formRemainder:formula}
	\]
	where $\G : (0,\infty)^3 \rightarrow \mathbb R$ is an analytic function, $\otimes$ may denote any kind of product
	structure, such as cross product, dot product, scalar or matrix multiplication, $s_{j}\in\set{0,\theta_{j}}$ for $j=1,\dots,K$,
	\begin{multline*}
	g^p(u,v,w;s_{1},\dots,s_{K-2})=
	\G \left(\frac{|\T 0w|}{|w|}, \frac{|\T 0v|}{|v|}, \frac{|\T vw|}{|v-w|}\right) 
	\Gamma(u,v,w,s_{1},s_{2})\cdot{}\\
	{}\cdot\left( \bigotimes_{i=3}^{K_1} \dg(u+ s_i v) \right) \otimes \left(\bigotimes_{j=K_1}^{K_2} \dg(u + s_i w) \right) \otimes \left(\bigotimes_{j=K_2}^{K-2} \dg(u+v + s_i (w-v)) \right),
	\end{multline*}
	and
	$\Gamma(u,v,w,s_{1},s_{2})$ is a term of one of the four types
	\begin{align*}
	&\frac{\br{\gamma'(u+s_1 w) - \gamma'(u+s_1 v)}\otimes\br{\gamma'(u+s_2 w) - \gamma'(u+s_2 v)}}{|v|^{p-2} |w|^{p-2} |v-w|^{p}}, \\
	&\frac{|\gamma'(u+s_1 w) - \gamma'(u+s_2 w)|^2}{|v|^{p-2} |w|^{p-2} |v-w|^{p}}, \\ 
	& \frac{|\gamma'(u+s_1 v) - \gamma'(u+s_2 v)|^2}{|v|^{p-2} |w|^{p-2} |v-w|^{p}}, \\ 
	& \frac{|\gamma'(u+v + s_1 (w-v)) - \gamma'(u+v+s_2 (w-v))|^2}{|v|^{p-2} |w|^{p-2} |v-w|^{p}}.
	\end{align*}
\end{lemma}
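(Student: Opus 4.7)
The plan is to compute $\delta\Mpz(\gamma,h)$ directly from the definition \eqref{def:GenIntMengCurv} and then massage the resulting expression into the claimed form. First, I would differentiate under the triple integral. Since
\[
R^{(p,2)}(x,y,z)^{-1} = \frac{|(y-x)\wedge(z-x)|^2}{\br{|y-z||y-x||z-x|}^p},
\]
the chain rule applied to $\Mpz(\gamma+th)$ produces three kinds of contributions in the variation: (i) those hitting one of the three $|\gamma'(u_i)|$ factors, (ii) those hitting one of the three $|\gamma(u_i)-\gamma(u_j)|^p$ factors in the denominator, and (iii) those hitting the wedge-product factor $|(\gamma(u_2)-\gamma(u_1))\wedge(\gamma(u_3)-\gamma(u_1))|^2$ in the numerator. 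Each of these is linear in $h$ or $h'$ evaluated at the corresponding $u_i$.

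The next step is to exploit the $S_3$-symmetry of the integrand in $(u_1,u_2,u_3)$ to restrict integration to one of the six orderings and substitute $u=u_1$, $v=u_2-u_1$, $w=u_3-u_1$, chosen so that the resulting domain is exactly $\R/\Z\times D$ (the constraints $w\le 1+2v$, $v\ge -1+2w$ arise from the cyclic geometry of $\R/\Z$). Under this change of variables the wedge product becomes
\[
\frac{1}{|vw|}(\gamma(u+v)-\gamma(u))\wedge(\gamma(u+w)-\gamma(u)) = \frac{\difference{v,0}\gamma}{v}\wedge\frac{\difference{w,0}\gamma}{w},
\]
and so the variation in contribution (iii) reproduces, after expanding the square and identifying the main algebraic identity $|a\wedge b|^2=|a|^2|b|^2-\langle a,b\rangle^2$, exactly
\[
\left\langle\frac{\difference{w,0}\gamma}{w}-\frac{\difference{v,0}\gamma}{v},\;\frac{\difference{w,0}h}{w}-\frac{\difference{v,0}h}{v}\right\rangle
\]
up to a prefactor that, when combined with the $|v-w|^{-p}|v|^{-(p-2)}|w|^{-(p-2)}$ kernel factors, yields $\Qp(\gamma,h)$. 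All other terms are collected into $\Rp(\gamma,h)$.

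Finally, I need to check that every summand contributing to $\Rp$ admits the structure claimed in \eqref{lemma:formRemainder:formula}. The idea is to rewrite each $h$- or $h'$-factor as $h'$ integrated against a delta-shift using $h(u+x)-h(u+y)=\int_0^1 (x-y)h'(u+y+\theta(x-y))\d\theta$, and to absorb the arc-length parametrization factors $|\gamma'(u+s_iv)|=1$ as $\dg(u+s_iv)\cdot\dg(u+s_iv)$ or leave them as tangent fields, which generates the tensor products $\bigotimes\dg(\cdot)$. The rational prefactors $|v|^{2-p}|w|^{2-p}|v-w|^{-p}$ combined with squared differences of $\dg$ reproduce the four admissible types of $\Gamma$; what is left over is a scalar built from the length ratios $|\triangle_{0,w}\gamma|/|w|$, $|\triangle_{0,v}\gamma|/|v|$, $|\triangle_{v,w}\gamma|/|v-w|$, which for a bi-Lipschitz curve stay bounded away from $0$, so composition with suitable rational/root functions yields the analytic $\G$. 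The main obstacle is the bookkeeping: ensuring that after Taylor expansion of every non-principal term one never ends up with a factor worse than the two admissible Gagliardo-type differences already encoded in $\Gamma$, which amounts to showing that the ``critical'' singularity sits entirely in $\Qp$. This is where the assumption $q=2$ is used, since the algebraic identity for $|a\wedge b|^2$ linearizes the wedge term and cleanly isolates the principal part.
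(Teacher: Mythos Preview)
The paper does not give its own proof of this lemma; it is quoted verbatim from \cite[Lemma~4.2]{BlattReiter:2015:Menger} and used as a black box. So there is no proof in the present paper to compare your proposal against.

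That said, your outline follows the route actually taken in \cite{BlattReiter:2015:Menger}: compute the first variation, reduce to $\R/\Z\times D$ by symmetry, use the algebraic identity $|a\wedge b|^2=|a|^2|b|^2-\langle a,b\rangle^2$ to isolate the principal bilinear form $\Qp$, and then rewrite all remaining summands via the fundamental theorem of calculus so that every difference quotient becomes an integral of $\gamma'$ or $h'$. Your description of how $h'$ arises and how the $\G$-factor appears as a function of the three length ratios is correct.

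Where your sketch is thin is exactly where you flag it: the bookkeeping in the final paragraph. It is not automatic that every non-principal summand carries \emph{two} first-order differences of $\gamma'$ (i.e.\ a factor of type $\Gamma$). For instance, the variation of a single factor $|\gamma(u_i)-\gamma(u_j)|^{-p}$ a priori contributes only one difference quotient, and one must combine it with the expansion of the wedge factor (using $|a|^2|b|^2-\langle a,b\rangle^2$ once more, now with $a,b$ close to unit vectors) to manufacture the second difference. Similarly, showing that the four listed $\Gamma$-types exhaust what can occur requires tracking which pair of variables $(u_i,u_j)$ each difference comes from after the change to $(u,v,w)$. None of this is deep, but it is the actual content of the lemma, and your sketch stops just short of it.
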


\section{Estimate of the main term $Q^{(p)}$}
\label{sec:EstimateMainTerm}

Next we investigate the order of the main term $Q^{(p)}$, for which we need to examine its $L^2$-representation first.

\begin{lemma}
    \label{lemma:L2MainTermViaFourier}
    Let $p \in (\tfrac 7 3,\tfrac 8 3)$, $s \ge 0$ and $\gamma \in H^{3p-4+s}(\R/\Z,\R^n)$ be parametrized by arc-length.
    Then, there is $\Qpt(\gamma) \in H^s(\R/\Z,\R^n)$ such that for all $h \in H^{\frac 3 2 p -2}(\R/\Z,\R^n)$,
    \[
        \Qp(\gamma,h) = \int_{\R/\Z} \left<\Qpt(\gamma)(u), h(u)\right> \d u.
    \]
    In particular, $\Qpt$ maps $C^\infty(\R/\Z,\R^n)$ into itself.
    The Fourier coefficients of $\Qpt$, for all $k\in \Z$, are given by
    \begin{align*}\label{eq:fouriercoeffQ}
        \widehat{\Qpt(\gamma)}(k) = q_k^{(p)} \abs{k}^{3p-4} \hat{\gamma} (k).
    \end{align*}
    The constants $q_k^{(p)}$ are bounded independently of $n$, and satisfy 
    \[
        0 \leq q_k^{(p)} = c + o(1)
    \]
    as $\abs{k} \rightarrow \infty$ for $c>0$.
\end{lemma}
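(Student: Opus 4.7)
The plan is to diagonalize $\Qp(\gamma,h)$ on the Fourier side. Expand $\gamma(u)=\sum_{k\in\Z}\hat\gamma(k) e^{2\pi i k u}$ and analogously for $h$, and set
\[
    a_k(v,w) := \frac{e^{2\pi i k w}-1}{w}-\frac{e^{2\pi i k v}-1}{v},
\]
so that $\frac{\difference{w,0}\gamma}{w}-\frac{\difference{v,0}\gamma}{v}=\sum_k \hat\gamma(k)\, a_k(v,w)\, e^{2\pi i k u}$. Substituting the Fourier series of $\gamma$ and $h$ into $\Qp(\gamma,h)$, using $\int_{\R/\Z} e^{2\pi i (k-l) u}\d u=\delta_{k,l}$, and invoking Fubini (justified by the integrability estimates discussed below) gives
\[
    \Qp(\gamma,h)=\sum_{k\in\Z} I_k\,\hat\gamma(k)\cdot\overline{\hat h(k)},
    \qquad
    I_k := \iint_D \frac{\abs{a_k(v,w)}^2}{\abs{v-w}^p \abs{v}^{p-2}\abs{w}^{p-2}} \d w \d v.
\]
Defining $\Qpt(\gamma)$ through its Fourier coefficients $\widehat{\Qpt(\gamma)}(k):=I_k\,\hat\gamma(k)$, the claimed pairing identity then follows from Parseval.

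To extract the $k$-dependence of $I_k$, I substitute $\tilde v=\abs{k}v$ and $\tilde w=\abs{k}w$ for $k\ne 0$. Then $a_k(v,w)=\abs{k}\,a_{\operatorname{sgn}(k)}(\tilde v,\tilde w)$, the denominator scales by $\abs{k}^{-(3p-4)}$ and the area element by $\abs{k}^{-2}$, so $I_k=q_k^{(p)}\abs{k}^{3p-4}$ with
\[
    q_k^{(p)} := \iint_{\abs{k}D} \frac{\abs{a_{\operatorname{sgn}(k)}(\tilde v,\tilde w)}^2}{\abs{\tilde v-\tilde w}^p \abs{\tilde v}^{p-2}\abs{\tilde w}^{p-2}} \d\tilde w \d\tilde v.
\]
The integrand is nonnegative and independent of $n$, so $q_k^{(p)}\geq 0$ is bounded independently of $n$. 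The scaled domain $\abs{k}D$ monotonically exhausts the quadrant $Q:=(-\infty,0)\times(0,\infty)$ as $\abs{k}\to\infty$, so once the limiting integrand is shown to lie in $L^1(Q)$, dominated convergence yields $q_k^{(p)}\to c$ with
\[
    c := \iint_Q \frac{\abs{a_{\pm 1}(\tilde v,\tilde w)}^2}{\abs{\tilde v-\tilde w}^p \abs{\tilde v}^{p-2}\abs{\tilde w}^{p-2}} \d\tilde w \d\tilde v,
\]
which is independent of the sign of $k$ since flipping $k$ complex-conjugates $a_k$. Positivity $c>0$ is immediate because the integrand is strictly positive on a set of positive measure, and the uniform bound $q_k^{(p)}\le c$ together with Parseval then gives $\Qpt(\gamma)\in H^s$ whenever $\gamma\in H^{3p-4+s}$; in particular $\Qpt$ maps $C^\infty$ to itself.

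The main obstacle is verifying that $c$ is finite, which requires simultaneous control at three loci. Near the origin the difference quotient $(e^{2\pi i\tau}-1)/\tau$ stays bounded, so $\abs{a_{\pm 1}}^2$ is bounded there; on $Q$ one has $\abs{\tilde v-\tilde w}\geq\max(\abs{\tilde v},\abs{\tilde w})$, which reduces the local singularity to $\abs{\tilde v}^{-(p-2)}\abs{\tilde w}^{-(p-2)}$, integrable because $p-2<\tfrac 2 3<1$. At infinity, a Taylor expansion yields $\abs{a_{\pm 1}}\lesssim \abs{\tilde v}^{-1}+\abs{\tilde w}^{-1}$, so the integrand decays with total negative power $3p-4>3$, which is precisely the range $p>\tfrac 7 3$ where the two-dimensional tail integrates. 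These same pointwise bounds furnish the dominating function needed both for the Fubini interchange in the first paragraph and for the dominated-convergence step in the limit $\abs{k}\to\infty$.
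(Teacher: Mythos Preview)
Your approach is the one underlying the paper's proof: the paper simply cites \cite[Proposition~4.1]{BlattReiter:2015:Menger} for the diagonal Fourier representation $\Qp(\gamma,h)=\sum_k\rho_k\langle\hat\gamma(k),\hat h(k)\rangle_{\C^n}$ with $\rho_k=c\abs{k}^{3p-4}+o(\abs{k}^{3p-4})$, then sets $q_k^{(p)}=\abs{k}^{4-3p}\rho_k$ and finishes with Parseval. You are reproducing the scaling argument behind that proposition, which is fine, but your near-origin integrability step has a genuine gap.

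Boundedness of $\tau\mapsto(e^{2\pi i\tau}-1)/\tau$ gives only $\abs{a_{\pm1}}^2\le C$, and together with $\abs{\tilde v-\tilde w}\ge\max(\abs{\tilde v},\abs{\tilde w})$ this does \emph{not} reduce the local singularity to $\abs{\tilde v}^{-(p-2)}\abs{\tilde w}^{-(p-2)}$: the factor $\abs{\tilde v-\tilde w}^{-p}$ is still singular at the origin. On the region $\{\abs{\tilde w}\ge\abs{\tilde v}\}$ your bound yields $C\abs{\tilde w}^{-(2p-2)}\abs{\tilde v}^{-(p-2)}$, and integrating over $0<\abs{\tilde v}<\abs{\tilde w}<1$ gives $\int_0^1\abs{\tilde w}^{5-3p}\d\abs{\tilde w}$, which diverges for every $p>2$ and hence throughout the range $p\in(\tfrac73,\tfrac83)$. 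What is actually needed is the Lipschitz estimate: since $f(\tau)=(e^{2\pi i\tau}-1)/\tau$ extends analytically through $0$, one has $\abs{a_{\pm1}(\tilde v,\tilde w)}=\abs{f(\tilde w)-f(\tilde v)}\le L\abs{\tilde w-\tilde v}$ on bounded sets, so $\abs{a_{\pm1}}^2\le L^2\abs{\tilde w-\tilde v}^2$. This improves the integrand to $L^2\abs{\tilde w-\tilde v}^{2-p}\abs{\tilde v}^{2-p}\abs{\tilde w}^{2-p}$, and the same computation now produces $\int_0^1\abs{\tilde w}^{7-3p}\d\abs{\tilde w}$, finite precisely when $p<\tfrac83$. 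That is where the upper bound on $p$ enters; your argument as written never uses it. A minor side remark: since $\abs{k}D$ increases to $Q$ and the integrand is nonnegative, monotone convergence is the cleaner tool for $q_k^{(p)}\to c$.
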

\begin{proof}
    By \cite[Proposition 4.1]{BlattReiter:2015:Menger}, we have
    \[
        \Qp(\gamma,h) = \sum_{k \in \Z}\rho_k \left< \hat \gamma(k),\hat h(k)\right>_{\C^n}, \text{ where } \rho_k = c\abs{k}^{3p-4} + o(\abs{k}^{3p-4}) \text{ as } \abs{k} \to \infty,
    \]
    for any $h \in H^{\frac 3 2 p -2}(\R/\Z,\R^n)$. In particular,  the $\rho_k$ are independent of $n$, cf.\ their definition in the proof of \cite[Proposition 4.1]{BlattReiter:2015:Menger}.
    Setting $q_k^{(p)} := \abs{k}^{4-3p} \rho_k$, which implies that the $q_k^{(p)}$ are also independent of $n$, we obtain
    \[
        \begin{split}
            (1+k^2)^{\frac s 2}\abs[\big]{q_k^{(p)}\abs{k}^{3p-4}\hat \gamma(k)}
            &= (1+k^2)^{\frac s 2}\abs{c+o(1)}\abs{k}^{3p-4} \abs{\hat \gamma(k)}\\
            &\le (1+k^2)^{\frac s 2} \tilde c \abs{k}^{3p-4} \abs{\hat \gamma(k)}
            \le \tilde c (1+k^2)^{\frac {3p-4+s} {2}} \abs{\hat \gamma(k)} \text{ as } \abs{k} \to \infty
        \end{split}
    \]
    for some $\tilde c>0$.
    The right hand side, taken as a sequence in $k \in \Z$, is in $\ell^2(\Z)$ because $\gamma \in H^{3p-4+s}(\R/\Z,\R^n)$, see \cite[Chapter~4, (3.8)]{Taylor:1996:PartialDifferentialEquationsa},
    and so $\Qpt(\gamma)$, the inverse Fourier transform of $q_k^{(p)}\abs{k}^{3p-4}\hat \gamma(k)$, is well-defined and in $H^s(\R/\Z,\R^n)$.
    All $q_k^{(p)}\abs{k}^{3p-4}$ are real as $\Qp$ maps real functions to real numbers and $\overline{\hat \gamma(k)} = \hat \gamma (-k)$ due to the real image of $\gamma$.
    Thus, $\Qpt(\gamma)(u) = \sum_{k\in \Z} q_k^{(p)}\abs{k}^{3p-4} \hat{\gamma}(k) e^{-2\pi i k u} = \hat \gamma(0) + \sum_{k \in \N} q_k^{(p)}\abs{k}^{3p-4} \Re(\hat \gamma(k) e^{-2 \pi i k u})$ for almost all $u \in \R/\Z$ meaning that $\Qpt(\gamma)$ maps into the real numbers.
    
    By Parseval's relation \cite[Proposition~3.2.7~(3)]{Grafakos:2014:ClassicalFourierAnalysis}, we have
    \[
        \int_{\R/\Z} \left<\Qpt(\gamma)(u),h(u)\right> \d u
        = \sum_{k\in \Z} \left< q_k^{(p)}\abs{k}^{3p-4}\hat \gamma(k), \hat h(k)\right>_{\C^n}
        = \sum_{k \in \Z}\rho_k \left< \hat \gamma(k),\hat h(k)\right>_{\C^n}
        = \Qp(\gamma,h).
    \]
\end{proof}
\begin{remark}
    Assuming $C^5$-regularity of $\gamma$, one may explicitly calculate
    \begin{align*}
        \Qpt(\gamma)(u) :=& \iint_D \abs{v-w}^{-p} \abs{v}^{2-p} \abs{w}^{2-p} \cdot \Biggl(
        \frac {\frac{\difference{-w,0} \gamma} {-w} - \frac {\difference{w,0} \gamma} {w}} {w}
        + \frac {\frac {\difference{-v,0} \gamma} {-v} - \frac {\difference{v,0} \gamma} {v}} {v}\\
        &+ \frac {\frac {\difference{v,0} \gamma} {v} - \frac {\difference{v-w,-w} \gamma} {v}} {w}
        + \frac {\frac {\difference{w,0} \gamma} {w} - \frac {\difference{w-v,-v} \gamma} {w}} {v}
        \Biggr)
        \d w \d v.
    \end{align*}
    The proof works via first restricting to the case $\abs{v},\abs{w}>\epsilon$ to obtain the formula and then showing existence for $\epsilon = 0$ by means of several Taylor approximations.
    Since this method is quite technical and long, we choose to omit it here.
\end{remark}

From the previous \thref{lemma:L2MainTermViaFourier}, we deduce the following indispensable corollary regarding the order of the main term $Q^{(p)}$.

\begin{corollary}  \label{cor:Ql}
	Given real numbers $p\in (\tfrac 73, \tfrac 83)$ and $m\geq 0$, there exists a constant $ \widetilde{C}=\widetilde{C}(p)>0$ such that for all curves $\gamma \in C^\infty(\R/\Z, \R^n)$ and  integers $l\geq 0$ 
	\begin{align*}
	\|\gamma^{(l+3)} \|_{H^{m+3p-7}} \leq \widetilde{C} \|\partial^{l} \Qpt(\gamma)\|_{H^m}
	\end{align*}
	holds.
\end{corollary}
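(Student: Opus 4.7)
The strategy is to pass everything to the Fourier side, where the operator $\Qpt$ acts diagonally, and reduce the desired $H^s$-estimate to a pointwise inequality between Fourier weights. By \thref{lemma:L2MainTermViaFourier},
\[
\widehat{\partial^l \Qpt(\gamma)}(k) = (2\pi i k)^l q_k^{(p)} \abs{k}^{3p-4} \hat\gamma(k), \qquad \widehat{\gamma^{(l+3)}}(k) = (2\pi i k)^{l+3} \hat\gamma(k).
\]
Writing both $H^s$-norms as Fourier series, the claimed inequality will follow once I show the existence of a constant $\widetilde C=\widetilde C(p)>0$, independent of $l$ and $k$, such that
\[
(2\pi)^6 (1+k^2)^{m+3p-7}\, k^{2l+6} \;\leq\; \widetilde C^{\,2} (1+k^2)^m (q_k^{(p)})^2 \abs{k}^{2l+6p-8}
\]
for every $k \in \Z$ and every integer $l\geq 0$.

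Both sides vanish at $k=0$ (using $l+3\geq 1$ and $6p-8>0$). For $k\neq 0$ the factor $(2\pi k)^{2l}$ cancels, and the condition reduces to
\[
(2\pi)^6 (1+k^2)^{3p-7}\, \abs{k}^{14-6p} \;\leq\; \widetilde C^{\,2}\, (q_k^{(p)})^2.
\]
Since $p \in (\tfrac 73, \tfrac 83)$ gives $3p-7 \in (0,1)$, one has $(1+k^2)^{3p-7} \leq 2^{3p-7} \abs{k}^{6p-14}$ for $\abs{k}\geq 1$, so that $(1+k^2)^{3p-7} \abs{k}^{14-6p} \leq 2^{3p-7}$. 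Hence the left-hand side is bounded by a constant $C_0(p)$ uniformly in $k\neq 0$ and $l$, and the whole estimate reduces to proving that $q_k^{(p)}$ is bounded away from zero on $\Z\setminus\set{0}$.

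\textbf{Main obstacle.} The one nontrivial point is the strict positivity of $q_k^{(p)}$ on $\Z\setminus\{0\}$, since \thref{lemma:L2MainTermViaFourier} only asserts nonnegativity together with the asymptotic $q_k^{(p)}=c+o(1)$ with $c>0$. For $\abs{k}$ large enough, say $\abs{k}\geq K$, the asymptotic yields $q_k^{(p)}\geq c/2$. For each of the finitely many remaining indices $1\leq \abs{k}\leq K$, I would establish $q_k^{(p)}>0$ by testing against $\gamma_k(u):=\cos(2\pi k u)\,e_1$ with $e_1$ a unit vector in $\R^n$. Evaluating $\Qp(\gamma_k,\gamma_k)$ via the Fourier representation from the proof of \thref{lemma:L2MainTermViaFourier} gives a positive multiple of $q_k^{(p)}\abs{k}^{3p-4}$, while the original integral defining $\Qp(\gamma_k,\gamma_k)$ has a pointwise nonnegative integrand that can only vanish identically if $w\mapsto \difference{w,0}\gamma_k/w$ is $w$-independent, i.e.\ $\gamma_k$ is affine. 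This is impossible for $k\neq 0$, so $q_k^{(p)}>0$. Setting $\widetilde C^{\,2}:=C_0(p)/\inf_{k\neq 0}(q_k^{(p)})^2$ then completes the proof.
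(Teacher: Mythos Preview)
Your proof is correct and follows essentially the same route as the paper: both pass to the Fourier side, cancel the common factor $(2\pi k)^{2l}$, use $(1+k^2)^{3p-7}\le (2|k|)^{2(3p-7)}$ for $k\neq 0$, and reduce the estimate to $\inf_{k\neq 0} q_k^{(p)}>0$. The paper simply asserts this infimum is positive (defining $\widetilde C^{-1}:=\inf_{k\neq 0}\{|q_k^{(p)}|(2\pi)^{-3}2^{7-3p}\}$ without further comment), whereas you supply a test-function argument for the finitely many small $|k|$; this extra justification is sound and in fact fills a small gap the paper leaves open.
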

\begin{proof}
	By applying \thref{lemma:L2MainTermViaFourier}, well-known properties of Fourier coefficients and the elementary estimate $(1+k^2)^{3p-7} \leq (2|k|)^{2(3p-7)}$ for any $k\in \Z  \setminus 0$,  we get
	\begin{align*}
	\|\partial^{l}\Qpt (\gamma) \|_{H^m}^2 &= \sum_{k\in \Z} (1+|k|^2)^m|\widehat{\partial^{l}\Qpt (\gamma)}(k)|^2 \\
	&= \sum_{k\in \Z} (1+|k|^2)^m (2\pi| k|)^{2l} |\widehat{\Qpt (\gamma)}(k)|^2 \\
	&= \sum_{k\in \Z} (1+|k|^2)^m (2\pi| k|)^{2l} (q_k^{(p)})^2 |k|^{2(3p-4)} |\hat{\gamma} (k)|^2\\
	&\geq \inf_{k\in\Z\setminus 0} \{(q_k^{(p)})^2 (2\pi)^{-6} 2^{2(7-3p)} \} \sum_{k \in \Z} (1+|k|^2)^{m+3p-7} (2\pi| k|)^{2(l+3)}   |\hat{\gamma} (k)|^2 \\
	&= \widetilde{C}^{-2} \sum_{k \in \Z} (1+|k|^2)^{m+3p-7} |\widehat{\gamma^{(l+3)} } (k)|^2 \\
	&= \widetilde{C}^{-2}   \|\gamma^{(l+3)} \|_{H^{m+3p-7}}^2,
	\end{align*}
	where $\widetilde{C}:=\inf_{k\in\Z\setminus{0}} \{\abs{q_k^{(p)}} (2\pi)^{-3} 2^{7-3p}\}^{-1}$ is a positive constant only depending on $p$.
    In particular, independence of $n$ follows from  \thref{lemma:L2MainTermViaFourier}.
\end{proof}

\section{The remainder term $R^{(p)}$}
\label{sec:RemainderTermNonCritical}

Now we aim to estimate the remainder term $R^{(p)}$, which appears in the first variation of the generalized integral Menger curvature for $q=2$ and $p\in (\tfrac 73, \tfrac 83)$, and its derivatives. We observe that it is indeed of lower order in comparison to the main term $Q^{(p)}$, which has been studied in the previous section.

\subsection{Representation of the remainder term $R^{(p)}$}

In this subsection we are concerned with an alternative representation of the remainder term $R^{(p)}$ and its derivatives.
Let us first recall the following result.

\begin{lemma}[Regularity of the remainder integrand, \protecting{\cite[Lemma 4.4]{BlattReiter:2015:Menger}}]
    \label{lemma:regularityIntegrandRemainder}\ \\
    Let $\g\in W^{(3p-4)/2+\s,2}(\R/\Z,\R^n)$ be a simple curve parametrized by arc-length.
    \begin{itemize}
        \item If $\s=0$, then $\gp\in L^{1}(\R/\Z\times D,\R^{n})$ and
        \item if $\s>0$, then $((v,w) \mapsto \gp(\cdot,v,w;\dots))\in L^{1}(D,\W[\tilde\s,1](\R/\Z, \R^{n}))$ for any $\tilde\s<\s$.
    \end{itemize}
    The respective norms are bounded independently of $s_{1},\dots,s_{K}$.
\end{lemma}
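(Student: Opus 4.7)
The plan is to exploit the product structure of $\gp$, which decomposes as (i) the analytic factor $\G$ composed with three difference quotients of $\g$, (ii) the singular factor $\Gamma(u,v,w,s_1,s_2)$, and (iii) a tensor product of $\dg$ evaluated at various shifted arguments. I would first reduce both claims of the lemma to integrability estimates for $\Gamma$ alone. Factor (iii) is uniformly bounded by $1$ because $\abs{\dg} = 1$ a.e. For factor (i), since $\g$ is simple and arc-length parametrized (and, by Sobolev embedding, at least $C^{1}$ for $p > \tfrac{7}{3}$), a standard bilipschitz argument yields uniform bounds $0 < c \le \abs{\difference{0,w}\g}/\abs{w}, \abs{\difference{0,v}\g}/\abs{v}, \abs{\difference{v,w}\g}/\abs{v-w} \le 1$ on $\R/\Z\times D$; hence $\G$ evaluated at these arguments is bounded by a constant depending only on the bilipschitz constant of $\g$ and on $\G$ on a compact subset of $(0,\infty)^3$.

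For the case $\s = 0$, I would verify $\iiint_{\R/\Z\times D}\abs{\Gamma}\d u\d v\d w < \infty$ by case analysis on the four types of $\Gamma$. In each case I integrate first in $u$, applying Cauchy--Schwarz and the finite-difference characterization $\|\dg(\cdot+h)-\dg\|_{L^2}\le C\abs{h}^{\alpha}[\dg]_{H^{\alpha}}$ at $\alpha = (3p-6)/2$, which is available since $\g\in W^{(3p-4)/2,2}$ implies $\dg\in H^{(3p-6)/2}$. The remaining $(v,w)$-integral over $D$ is then checked using the geometry of $D$---in particular $\abs{v-w}$ is comparable to $\max(\abs{v},\abs{w})$ away from the diagonal, and at the diagonal one decomposes translations through intermediate points---which together with careful tracking of the exponents confirms integrability throughout $p\in(\tfrac{7}{3},\tfrac{8}{3})$.

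For $\s>0$, I plan to control the Gagliardo seminorm of $u\mapsto \gp(u,v,w;s_1,\ldots,s_{K-2})$ in $W^{\tilde\s,1}(\R/\Z)$ and verify its integrability in $(v,w)\in D$. To this end I apply a finite difference in $u$ to $\gp$ and expand it via a discrete Leibniz rule across the three factor types: the difference either falls on $\G$ (and is controlled by its Lipschitz dependence on its arguments together with the regularity of the inner difference quotients, following $\g\in W^{(3p-4)/2+\s,2}$), on $\Gamma$ (where two $\dg$-differences already appear explicitly), or on one of the outer $\dg$ factors. In each case the enhanced regularity $\dg\in W^{(3p-6)/2+\s,2}$ absorbs the factor $\abs{t}^{\tilde\s}$ coming from the $W^{\tilde\s,1}$-seminorm via its fractional finite-difference characterization, leaving a quantity integrable over $D$.

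The main obstacle I anticipate is the combinatorial bookkeeping in the $\s>0$ case: each Leibniz step produces many terms, and one has to check that the borderline integrability over $D$ from the $\s=0$ argument survives every distribution of the additional difference; the restriction $\tilde\s < \s$ should appear precisely from the tightest such borderline. Uniformity of all bounds in $s_1,\ldots,s_K$ is automatic, as every estimate depends only on norms of $\g$ and on $\G$ restricted to a fixed compact subset of $(0,\infty)^3$.
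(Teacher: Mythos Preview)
The present paper does not give its own proof of this lemma: it is quoted verbatim from \cite[Lemma~4.4]{BlattReiter:2015:Menger} and used as a black box (see its invocations in the proofs of \thref{lemma:L2formRemainder} and the subsequent derivative lemma). So there is no proof here to compare against.

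That said, your outline is essentially the argument of the cited reference and is consistent with the techniques used elsewhere in this paper. In particular: the uniform bound on the $\Gp$-factor via the bilipschitz estimate is exactly what \cite[Proposition~2.1]{BlattReiter:2015:Menger} provides; the $\s=0$ case via Cauchy--Schwarz in $u$ together with the Fourier/finite-difference bound $\norm[L^2]{f(\cdot+h)-f} \le C\abs{h}^\alpha[f]_{H^\alpha}$ and subsequent reduction of the $(v,w)$-integral over $D$ is precisely the mechanism that reappears in the proof of \thref{thm:fracLeibnizrule} here (see the substitutions $\Phi:(v,w)\mapsto(t,\tilde w)$ and the computations around \eqref{eq:orderk}--\eqref{eq:case2}). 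The $\s>0$ case in the reference proceeds, as you anticipate, by distributing a finite difference across the factors and absorbing the extra $\abs{t}^{\tilde\s}$ via the improved regularity of $\gamma'$; the strict inequality $\tilde\s<\s$ arises for the reason you give. Your proposal is correct as a plan; the only labor is the case-by-case bookkeeping you already flag.
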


We then derive a representation of the remainder term $R^{(p)}$ corresponding to the $L^2$-re\-pre\-sen\-ta\-tion of the main term $Q^{(p)}$ in the first variation of $\Mpz$. Recall that  $\otimes$ denotes any kind of bilinear product structure, such as cross product, dot product, scalar or matrix multiplication.

\begin{lemma}
    \label{lemma:L2formRemainder}
    Let $\gamma \in C^\infty(\R/\Z,\R^n)$ be parametrized by arc-length and let $h \in W^{\frac 3 2 p -2,2}(\R/\Z,\R^n)$. 
    Then,
    \[
        R^{(p)}(\gamma,h)=\int_{\R/\Z} \left<\Rpt(\gamma)(u), h(u)\right>  \d u
    \]
    with
    \[	
        \Rpt(\gamma)_o(u) := \sum_{k=1}^{k_{\max}} \iint_{D} \idotsint\limits_{[0,1]^K} \frac \d {\d u} g^p_k(u - s_{K-1}v - s_K w,v,w;s_{1},\dots,s_{K-2}) \d\theta_{1}\cdots\d\theta_{K}\d v \d w \otimes e_o.
    \]
    Here, $e_o \in \R^n$ is the $o$-th canonical unit vector, each $g^p_k$ is of the same form as $g^p$ in \thref{lemma:formRemainder} and neither $K$ nor $k_{\max}$ depend on $\gamma$.
    Furthermore, $u \mapsto g^p_k(u,v,w;s_1,\ldots,s_K) \in C^\infty(\R/\Z,\R^n)$ for all $k \in \set{1,\ldots,k_{\max}}$.
\end{lemma}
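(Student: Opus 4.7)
The plan is to take the representation of $R^{(p)}(\gamma,h)$ supplied by Lemma \ref{lemma:formRemainder} and, for each summand, transfer the derivative from $h'$ onto the kernel $g^p$ via integration by parts, so that $\Rpt(\gamma)_o(u)$ can be read off as the coefficient of $h_o(u)$ in what remains.

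First, I would apply Fubini's theorem to push the $u$-integration to the innermost position of each summand. The $L^1$-integrability required for this is exactly what Lemma \ref{lemma:regularityIntegrandRemainder} provides; its hypothesis is certainly met by $\gamma\in C^\infty$ for any $\s\geq 0$. Then, for fixed $(v,w)\in D$ and $s_1,\dots,s_K$, I would substitute $\tilde u := u + s_{K-1}v + s_K w$; as a shift on the periodic domain $\R/\Z$, this preserves both measure and domain. Afterwards $h'$ is evaluated at $\tilde u$ while the first argument of $g^p$ becomes $\tilde u - s_{K-1}v - s_K w$.

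Second, I would integrate the resulting inner integral $\int_{\R/\Z}g^p(\tilde u - s_{K-1}v - s_K w,v,w;\dots)\otimes h'(\tilde u)\,\d\tilde u$ by parts in $\tilde u$. Periodicity of $\R/\Z$ kills the boundary terms, and the derivative then acts on $g^p$, yielding exactly the factor $\tfrac{\d}{\d\tilde u}g^p(\tilde u - s_{K-1}v - s_K w,v,w;s_1,\ldots,s_{K-2})$. The sign produced by the partial integration is harmless and can be absorbed into the definition of the finitely many $g^p_k$, which is why the index $k$ is needed. A second Fubini -- now immediate, since every remaining factor is smooth thanks to $\gamma\in C^\infty$ -- moves the $\tilde u$-integration back to the outside.

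Third, to obtain the componentwise formula, I would decompose $h(\tilde u)=\sum_{o=1}^n h_o(\tilde u)\,e_o$ along the canonical basis. Bilinearity of $\otimes$ then rewrites the integrand as $\sum_{o=1}^n \bigl[\tfrac{\d}{\d\tilde u}g^p_k(\ldots)\otimes e_o\bigr]\,h_o(\tilde u)$, with each bracketed factor scalar-valued; summing over $k$ and over the finite list of summands supplied by Lemma \ref{lemma:formRemainder} reproduces $\Rpt(\gamma)_o$ exactly as stated. Smoothness of $u\mapsto g^p_k(u,v,w;s_1,\dots,s_K)$ is inherited from $\gamma\in C^\infty$, since each factor of $g^p_k$ is either a smooth function of $\gamma$ and $\gamma'$ at shifted arguments or a composition of such data with the analytic function $\G$. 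The only genuine obstacle is that Fubini and the integration by parts must be legitimate uniformly in $(v,w)\in D$ down to the singular diagonal $\{v=w\}\cup\{v=0\}\cup\{w=0\}$, but this is precisely the content of Lemma \ref{lemma:regularityIntegrandRemainder}, so no further estimates are needed.
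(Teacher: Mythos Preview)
Your proposal is correct and follows essentially the same route as the paper's proof: invoke \thref{lemma:regularityIntegrandRemainder} for integrability, shift $u\mapsto \tilde u = u + s_{K-1}v + s_Kw$, integrate by parts using periodicity to kill boundary terms, and then read off the components via bilinearity of~$\otimes$. One small remark: the index $k$ in the sum $\sum_{k=1}^{k_{\max}}$ is not introduced to absorb the sign from integration by parts---it is already present because \thref{lemma:formRemainder} expresses $R^{(p)}$ as a finite sum of terms of the given form---but you are right that the sign can be absorbed into the $g^p_k$, which are only required to be ``of the same form'' as $g^p$.
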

\begin{proof}
	Since $\gamma \in C^\infty(\R/\Z,\R^n)$, \thref{lemma:regularityIntegrandRemainder} yields that the integrand is in fact integrable on the integration domain, allowing us to substitute $u$ by $\tilde u := u +s_{K-1}v + s_K w$.
	We also obtain that $u \mapsto g^p(u,v,w;s_1,\ldots,s_K) \in C^\infty(\R/\Z,\R^n)$ and note that $\otimes$ is a bilinear operation. Thus we may use integration by parts component-wise to obtain that \eqref{lemma:formRemainder:formula} is equal to
	\[
	\iiint\limits_{\mathbb R / \mathbb Z \times D} \idotsint\limits_{[0,1]^K} (\tfrac \d {\d \tilde u} g^p(\tilde u - s_{K-1}v - s_K w,v,w;s_{1},\dots,s_{K-2}) ) \otimes h(\tilde u)
	\d\theta_{1}\cdots\d\theta_{K}\d v \d w \d \tilde u.
	\]
	Note that we have no boundary terms because both $h$ and $g^p$ are periodic.
	Using the bilinearity of $\otimes$, we may write \Rpt{} as
    \[
        \int_{\R/\Z} \overline{R}^{(p)}(\gamma)(u) \otimes h(u) \d u 
    \]
    with
    \[
        \overline{R}^{(p)}(\gamma)(u) := \sum_{k=1}^{k_{\max}} \iint_{D} \idotsint\limits_{[0,1]^K} \frac \d {\d u} g^p_k(u - s_{K-1}v - s_K w,v,w;s_{1},\dots,s_{K-2}) \d\theta_{1}\cdots\d\theta_{K}\d v \d w.
    \]
    Seeing as $x \mapsto \overline{R}^{(p)}(\gamma)(u) \otimes x \in \R$ is linear, we may set
    \[
        \Rpt(\gamma)_o(u) := \overline{R}^{(p)}(\gamma)(u) \otimes e_o
    \]
    and obtain for $x = \sum_{o=1}^n x_oe_o \in \R^n$:
    \[
        \overline{R}^{(p)}(\gamma)(u) \otimes x = \sum_{o=1}^n x_o \cdot \overline{R}^{(p)}(\gamma)(u) \otimes e_o = \sum_{o=1}^n x_o \cdot \Rpt(\gamma)_o(u) = \left<\Rpt(\gamma)(u),x\right>.
    \]
\end{proof}

The $l$-th derivative of the remainder representation \Rpt{} can be computed as follows.
\begin{lemma}
    Let $\gamma \in C^\infty(\R/\Z,\R^n)$ be parametrized by arc-length.
    Then, 
    \[
        \begin{split}
            &\frac {\d^l} {\d u^l} \Rpt(\gamma)_o(u)\\
            =& \sum_{k=1}^{k_{\max}} \iint_{D} \idotsint\limits_{[0,1]^K} \frac {\d^{l+1}} {\d u^{l+1}} g^p_k(u - s_{K-1}v - s_K w,v,w;s_{1},\dots,s_{K-2}) \d\theta_{1}\cdots\d\theta_{K}\d v \d w\otimes e_o
        \end{split}
    \]
    for all $l \in \N$.
\end{lemma}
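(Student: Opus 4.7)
The plan is to proceed by induction on $l$, with the preceding lemma providing the base case (the formula at $l=0$ recovers it, or serves as the initialization if $\N$ excludes $0$). For the inductive step, assuming the formula at level $l$, the goal is to differentiate once more in $u$ and interchange the outer derivative with the triple integral over $D \times [0,1]^K$.

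I would justify this interchange via the Leibniz integration rule, which requires smoothness of the integrand in $u$ together with an integrable dominating function uniformly on a neighborhood of $u$. Smoothness follows from $\gamma \in C^\infty(\R/\Z,\R^n)$, since the $u$-dependence of $g^p_k(u - s_{K-1}v - s_K w, v, w; \ldots)$ enters only through the $\gamma'$-factors in $\Gamma$ and in the tensor products comprising $g^p_k$. For the dominating function, the key observation is that each application of $\partial_u$ simply replaces some $\gamma^{(j)}$ by $\gamma^{(j+1)}$, leaving the singular denominators $\abs{v}^{p-2}\abs{w}^{p-2}\abs{v-w}^{-p}$ untouched. By the product rule, $\partial_u^{l+1} g^p_k$ is then a finite sum of terms each of the same structural form as the original $g^p$ in \thref{lemma:formRemainder}, only with $\gamma$ possibly replaced by higher derivatives; in particular, the four $\Gamma$-types are preserved because divided differences of $\gamma^{(j)}$ still supply the positive powers of $\abs{v}$, $\abs{w}$ or $\abs{v-w}$ needed to cancel the singularities.

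Consequently, applying \thref{lemma:regularityIntegrandRemainder} with $\sigma = 0$ to each of these finitely many summands yields an $L^1(\R/\Z \times D)$-bound that is uniform in $s_1, \ldots, s_K \in [0,1]$. Integrating over $[0,1]^K$ furnishes the required integrable majorant, so the Leibniz rule delivers the interchange of derivative and integral and completes the induction. I expect the main obstacle to be precisely this differentiation under an integral whose kernel is singular at $v = 0$, $w = 0$ and $v = w$; it should be disarmed by the observation that $u$-differentiation never acts on the singularity itself, so the integrability analysis of \thref{lemma:regularityIntegrandRemainder} is preserved verbatim at every inductive level.
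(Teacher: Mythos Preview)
Your overall plan (induction plus Leibniz rule for differentiation under the integral) is reasonable, but the key justification has a gap. You claim that $\partial_u^{l+1} g^p_k$ is a finite sum of terms ``of the same structural form as the original $g^p$ \ldots only with $\gamma$ possibly replaced by higher derivatives'', and then invoke \thref{lemma:regularityIntegrandRemainder} with $\sigma = 0$ as a black box. This does not quite work: when you differentiate the factor $\Gp\bigl(\tfrac{|\difference{0,w}\gamma|}{|w|},\tfrac{|\difference{0,v}\gamma|}{|v|},\tfrac{|\difference{v,w}\gamma|}{|v-w|}\bigr)$, the chain rule produces partial derivatives of $\Gp\circ|\cdot|$ evaluated at the \emph{original} arguments (still involving $\gamma$ itself), multiplied by extra factors such as $\int_0^1 \gamma''(u+tw)\d t$. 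The resulting expression mixes $\gamma$ in the $\Gp$-arguments with higher derivatives $\gamma^{(j)}$ elsewhere; it is not $g^p$ for any single curve, and \thref{lemma:regularityIntegrandRemainder} is stated only for the specific $g^p$ of \thref{lemma:formRemainder} built from one arc-length parametrized $\gamma$. To rescue your approach you would have to re-enter the proof of that lemma and verify that it tolerates these mixed terms---plausible, but not a direct citation.

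The paper sidesteps this entirely by using the $\sigma>0$ case of \thref{lemma:regularityIntegrandRemainder} rather than the $\sigma=0$ case. Since $\gamma\in C^\infty$, one has $((v,w)\mapsto g^p_k(\cdot,v,w;\dots))\in L^1(D,W^{\tilde\sigma,1}(\R/\Z,\R^n))$ for \emph{every} $\tilde\sigma>0$, uniformly in $s_1,\dots,s_K$. Via Morrey and Sobolev embeddings this gives $\norm[L^\infty]{\partial_1^{l+1} g^p_k(\cdot,v,w;\dots)}\le \widetilde C\,\norm[W^{l+5/2,1}]{g^p_k(\cdot,v,w;\dots)}$, and the right-hand side lies in $L^1(D\times[0,1]^K)$. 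This is exactly the integrable majorant you need, obtained without any structural analysis of the derivatives and without induction.
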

Note that we do not need the assumption of arc-length parametrization here, as $\Rpt$ is well-defined without it.
However, for other parametrizations, we do not expect $\Rpt(\gamma)$ to be related to $R^{(p)}(\gamma,\cdot)$.
\begin{proof}
    Since $X \mapsto X \otimes e_o$ is a fixed linear map, $\frac {\d^l} {\d u^l} (X(u) \otimes e_o) = ( \frac {\d^l} {\d u^l} \partial^l X(u)) \otimes e_o$, so we only need to worry about the integral term.
    By \thref{lemma:regularityIntegrandRemainder}, $((v,w) \mapsto \gp_k(\cdot,v,w;\dots))\in L^{1}(D,W^{\tilde \sigma,1}(\R/\Z, \R^{n}))$ for all $\tilde\sigma>0$ and its norm is independent of $s_1, \ldots, s_K$.
    This means that $g^p_k$ is differentiable with respect to $u$ arbitrarily many times and the derivatives are all integrable with the majorant
    \[
        \norm[L^\infty]{\partial_1^{l+1} g^p_k(\cdot,v,w;s_1,\ldots,s_K)}.
    \]
    By the Morrey- (see e.g.\ \cite[Paragraph~4.16]{AdamsFournier:2003:SobolevSpaces}) and Sobolev-embedding theorems, we can bound this above by
    \[
        C\norm[W^{1,2}]{\partial_1^{l+1} g^p_k(\cdot,v,w;s_1,\ldots,s_K)}
        \le \widetilde C \norm[W^{l + \frac 5 2,1}]{g^p_k(\cdot,v,w;s_1,\ldots,s_K)}.
    \]
    As we have a uniform $L^1$-bound on the latter, we may exchange integration and differentiation.
\end{proof}

\subsection{A fractional Leibniz rule} \label{subsec:FractionalLeibnizRule}

The remainder term $R^{(p)}$ contains amongst others the factor $\Gamma$, which is basically given as a product of two functions multiplied with a singular weight. By applying a Bessel potential space norm, we obtain a sort of fractional Leibniz rule which will subsequently be discussed for $\Gamma$ and its derivatives.

We observe that the $l$-th derivative of $R^{(p)}$ leads to $k$-th derivatives of $\Gamma$, where $k\leq l+1$. Applying the generalized Leibniz rule to the $ k$-th derivative of $\Gamma$ in the next step, we obtain
\[
\begin{split}
& \frac {\d^{k}} {\d u^{k}} \Gamma(u,v,w,s_1,s_2)  = \frac{1}{|v|^{p-2}|w|^{p-2}|v-w|^{p}} \\
& \cdot  \sum_{o=0}^{k} \binom{k}{o} \left(\gamma^{(k-o+1)}(u+x_1) - \gamma^{(k-o+1)}(u+x_2) \right) \otimes \left( \gamma^{(o+1)}(u+x_3) - \gamma^{(o+1)}(u+x_4) \right),
\end{split}
\]
where we have the four cases 
\[
\begin{split}
\textbf{Case 1:} & \quad (x_1,x_2,x_3,x_4) =(s_1w,s_1v, s_2w, s_2v), \\
\textbf{Case 2:} & \quad (x_1,x_2)=(x_3,x_4) = (s_1w,s_2w), \\
\textbf{Case 3:} & \quad (x_1,x_2)=(x_3,x_4) = (s_1v, s_2v), \\
\textbf{Case 4:} & \quad (x_1,x_2)=(x_3,x_4) = (v+s_1(w-v), v+s_2(w-v)).
\end{split}
\]
The key observation here is that $\Gamma$ as well as its derivatives can be broken down into sums of products of the form
\[
	\left(\gamma_i^{(k-o+1)}(u+x_1) - \gamma_i^{(k-o+1)}(u+x_2) \right) \cdot \left( \gamma_j^{(o+1)}(u+x_3) - \gamma_j^{(o+1)}(u+x_4) \right),
\]
where $i,j=1,\ldots,n$ and $o=0,\ldots, k$. The difference in the component functions and the additional factor $|v|^{2-p}|w|^{2-p}|v-w|^{-p}$ indicate a fractional derivative of the arising products. This motivates to deduce the following general statement, which can therefore be interpreted as fractional Leibniz rule.

\begin{theorem}\label{thm:fracLeibnizrule}
	Let $m> \frac{1}{2}$, $s_1,s_2 \in [0,1]$ and $p \in (\frac 73,\frac 83)$. Then there exists a positive constant $C_L = C_L(m, p) < \infty$  such that for all $f,g \in  C^\infty(\R/\Z,\R)$
	\begin{align*}\label{eq:fracLeibnizrule}
	& \iint_D   \frac{ \left\| \left(f(\cdot+x_1) - f(\cdot+x_2) \right) \left(g(\cdot+x_3) - g(\cdot+x_4) \right) \right\|_{H^m}}{|v|^{p-2}|w|^{p-2}|v-w|^{p}}  \d v \d w \nonumber  \\
	&  \leq C_L ( \|f\|_{H^{m+ \frac 32 p -3}} \|g\|_{H^{m+ \frac 32 p -3 }} ),
	\end{align*}
	where 
	\[
	\begin{split}
	\textbf{Case 1:} & \quad (x_1,x_2,x_3,x_4) =(s_1w,s_1v, s_2w, s_2v), \\
	\textbf{Case 2:} & \quad (x_1,x_2)=(x_3,x_4) = (s_1w,s_2w), \\
	\textbf{Case 3:} & \quad (x_1,x_2)=(x_3,x_4) = (s_1v, s_2v), \\
	\textbf{Case 4:} & \quad (x_1,x_2)=(x_3,x_4) = (v+s_1(w-v), v+s_2(w-v)).
	\end{split}
	\]
\end{theorem}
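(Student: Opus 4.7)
The proof rests on the Banach algebra property of $H^m$ for $m > 1/2$ (cf.~\eqref{prop:Banachalgebra}) and a Fourier-side estimate for the $H^m$-norm of a translation difference. Writing $F_{v,w}(u) := f(u+x_1)-f(u+x_2)$, $G_{v,w}(u) := g(u+x_3)-g(u+x_4)$ and $W(v,w) := |v|^{p-2}|w|^{p-2}|v-w|^p$, the Banach algebra property yields $\|F_{v,w}G_{v,w}\|_{H^m} \le C\|F_{v,w}\|_{H^m}\|G_{v,w}\|_{H^m}$, and a Cauchy--Schwarz step in the $(v,w)$-integral with weight $1/W$ then decouples $f$ and $g$. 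This reduces the theorem to the one-sided bound
\[
	\iint_D \frac{\|F_{v,w}\|_{H^m}^2}{W(v,w)}\,\d v\,\d w \leq C\|f\|_{H^{m+\frac{3}{2}p-3}}^2,
\]
together with its analogue for $g$.

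By Plancherel and the translation identity $\widehat{f(\cdot+a)}(k) = e^{2\pi i k a}\hat f(k)$,
\[
	\|F_{v,w}\|_{H^m}^2 = 4\sum_{k \in \Z} (1+k^2)^m \sin^2(\pi k(x_1-x_2))\,|\hat f(k)|^2,
\]
and Fubini permits swapping the sum and the $(v,w)$-integral. Thus the one-sided bound further reduces to the uniform scalar estimate
\[
	I(k) := \iint_D \frac{\sin^2(\pi k(x_1-x_2))}{W(v,w)}\,\d v\,\d w \leq C(1+k^2)^{\frac{3}{2}p-3}, \qquad k \in \Z.
\]

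In each of the four cases, $x_1-x_2$ is a linear function of $(v,w)$ with coefficients in $[-1,1]$ (namely a multiple of $v$, $w$, or $v-w$). For $|k|\ge 1$, the substitution $(v,w) \mapsto (v/|k|, w/|k|)$ converts $I(k)$ into $|k|^{3p-6}$ times an integral over the expanded region $|k|D$ whose integrand no longer depends on $k$. Since $|k|D \subseteq (-\infty,0)\times(0,\infty)$, we bound this by the corresponding integral over the entire second quadrant, which is finite precisely because $p\in(\frac{7}{3},\frac{8}{3})$: $\sin^2 \le 1$ secures convergence at infinity (needing $p>2$), $\sin^2(\theta) \le \theta^2$ secures convergence at the origin (needing $p<\frac{8}{3}$), and the remaining angular singularities near the coordinate axes are harmless since $p<3$. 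For $|k|\le 1$, the same quadratic bound on $\sin^2$ directly yields $I(k)\le Ck^2$. Combining both regimes gives $I(k) \le C(1+k^2)^{\frac{3}{2}p-3}$, which upon reinsertion into the Fourier sum completes the proof.

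The main obstacle is the sharpness of the target regularity $m + \frac{3}{2}p-3$: a naive single-parameter interpolation of the form $\|F_{v,w}\|_{H^m} \le C|x_1-x_2|^\beta\|f\|_{H^{m+\beta}}$ followed by direct insertion into the outer integral would demand $\beta > \frac{3}{2}p-3$ for convergence, losing regularity in both $f$ and $g$. The Banach algebra step together with the Fourier computation circumvents this by using the oscillation factor $\sin^2(\pi k(x_1-x_2))$ to compensate precisely the singular weight $1/W$, which is what makes the borderline exponent $\frac{3}{2}p-3$ attainable.
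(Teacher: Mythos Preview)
Your proof is correct and follows the same overall strategy as the paper: Banach algebra property of $H^m$, Cauchy--Schwarz in the $(v,w)$-integral to decouple $f$ and $g$, and a Fourier-side computation of the one-sided bound. The difference is organizational. The paper treats the four cases separately, in each one performing an explicit change of variables (e.g.\ $(v,w)\mapsto(\tfrac{v}{v-w},s_1(v-w))$ in Case~1) that reduces the double integral to a product of a $p$-dependent one-dimensional integral and $\int_0^1\frac{\|f(\cdot+\tilde w)-f\|_{H^m}^2}{|\tilde w|^{3p-5}}\,\d\tilde w$, and only then expands in Fourier to extract the factor $|k|^{3p-6}$. You instead expand in Fourier first, pass to the scalar integral $I(k)$, and use the homogeneity of the weight $W$ under the scaling $(v,w)\mapsto(v/|k|,w/|k|)$ to pull out $|k|^{3p-6}$ uniformly across all four cases, bounding the residual by the integral over the full quadrant $(-\infty,0)\times(0,\infty)$.

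Your route is somewhat cleaner because it treats the four cases at once; the paper's route makes the convergence checks more transparent, since after its substitutions the relevant integrals are genuinely one-dimensional and the conditions $p<3$ and $2<p<\tfrac{8}{3}$ appear directly. Two small points you should make explicit if you write this up: (i) the quadrant integral still depends on $s_1,s_2$ through the $\sin^2$ factor, and uniformity in $s_1,s_2\in[0,1]$ follows from a further rescaling (giving a factor $|s|^{3p-6}\le 1$ since $3p-6>0$); (ii) your ``angular singularity'' remark covers Cases~1 and~4 (where $x_1-x_2\propto w-v$) but in Cases~2 and~3 the numerator depends on only one variable, so the cleanest argument is to integrate out the other variable first (yielding $\int_0^\infty a^{2-p}(a+b)^{-p}\,\d a = C(p)\,b^{3-2p}$, finite for $2<p<3$) before applying the $\sin^2$ bounds.
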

\begin{proof}
	We first of all note that $H^m$ is a Banach algebra due to $m>\frac 12$ (see \eqref{prop:Banachalgebra}), which implies 
	\begin{align*}
	& \iint_D   \frac{ \left\| \left(f(\cdot+x_1) - f(\cdot+x_2) \right) \left(g(\cdot+x_3) - g(\cdot+x_4) \right) \right\|_{H^m}}{|v|^{p-2}|w|^{p-2}|v-w|^{p}}  \d v \d w \nonumber  \\
	& \leq C(m) \iint_D   \frac{ \left\| f(\cdot+x_1) - f(\cdot+x_2) \right\|_{H^m} \left\|g(\cdot+x_3) - g(\cdot+x_4) \right\|_{H^m}}{|v|^{p-2}|w|^{p-2}|v-w|^{p}}  \d v \d w \nonumber  \\
	& \leq C(m) \left(\iint_D   \frac{ \left\| f(\cdot+x_1) - f(\cdot+x_2) \right\|^2_{H^m}}{|v|^{p-2}|w|^{p-2}|v-w|^{p}}  \d v \d w \right)^{\frac 12}  \left(\iint_D   \frac{\left\|g(\cdot+x_3) - g(\cdot+x_4) \right\|^2_{H^m}}{|v|^{p-2}|w|^{p-2}|v-w|^{p}}  \d v \d w \right)^{\frac 12}
	\end{align*}
	for some positive constant $C(m)$ only depending on $m$. 
	
	We start with \textbf{Case 1}. We first see by the definition of the fractional Sobolev norm that
	\begin{align*}
	& \iint_D   \frac{ \left\| f(\cdot+s_1w) - f(\cdot+s_1 v) \right\|^2_{H^m}}{|v|^{p-2}|w|^{p-2}|v-w|^{p}}  \d v \d w \\
	& = \iint_D  \sum_{k\in\Z} (1+k^2)^m  |(\widehat{f(\cdot+s_1w) - f(\cdot+s_1v)})(k)|^2   \frac{\d v \d w }{|v|^{p-2}|w|^{p-2}|v-w|^{p}} .
	\end{align*}
	By substitution and the periodicity of $f$, we can rewrite the Fourier coefficients to 
	\begin{align}\label{eq:FouriercoeffCOV}
	|\widehat{\left(f(\cdot+s_1w) - f(\cdot+s_1v)\right)}(k)| & =  \left|\int_0^1 (f(u+s_1w) -  f(u+s_1v)) e^{-2\pi i k u}\d u\right| \nonumber \\
	& =  \left|e^{2\pi i k s_1 w}\right| \, \left|\int_0^1 (f(u) -  f(u+s_1(v-w))) e^{-2\pi i k u}\d u\right| \nonumber \\
	& = |(\widehat{f(\cdot+s_1(v-w))-f(\cdot)})(k)|,
	\end{align}
	which implies 
	\[
	\left\| f(\cdot+s_1w) - f(\cdot+s_1 v) \right\|_{H^m} = \left\|  f(\cdot+s_1 (v-w)) - f(\cdot)\right\|_{H^m}.
	\]
	Then, as in the proof of \cite[Lemma 4.4]{BlattReiter:2015:Menger}, respectively \cite[Lemma 1]{BlattReiter:2015:Menger}, we substitute with $\Phi:(v,w) \mapsto (t,\tilde{w}):= (\frac{v}{v-w}, s_1(v-w))$, $|\det D\Phi(v,w)| = \frac{s_1}{|v-w|}$,  $\Phi(D)\subset [0,1]\times [-1,0]$, such that
	\begin{align*}
	&\iint_D   \frac{ \left\| f(\cdot+s_1w) - f(\cdot+s_1 v) \right\|^2_{H^m}}{|v|^{p-2}|w|^{p-2}|v-w|^{p}}  \d v \d w \\
    &  = \iint_D   \frac{ \left\|  f(\cdot+s_1 (v-w)) - f(\cdot)\right\|^2_{H^m}}{|v|^{p-2}|w|^{p-2}|v-w|^{p}}  \d v \d w \\
	& \leq  s_1^{-1} \int_0^1 \int_{-1}^0 \frac{ \left\|  f(\cdot+\tilde{w}) - f(\cdot)\right\|^2_{H^m}} {|t \frac{\tilde{w}}{s_1}|^{p-2}|(t-1)\frac{\tilde{w}}{s_1}|^{p-2}|\frac{\tilde{w}}{s_1}|^{p-1}}  \d \tilde{w} \d t \\
	& = s_1^{3p-6}  \left( \int_{-1}^{0} \frac{ \left\|  f(\cdot+\tilde{w}) - f(\cdot)\right\|^2_{H^m}}{|\tilde{w}|^{3p-5}} \d \tilde{w} \right) \smash{\underbrace{\left(\int_0^1 \frac{dt}{|t(1-t)|^{p-2}} \right)}_{< \infty}}\\
	& \leq C(p)  \int_{-1}^{0} \frac{ \left\|  f(\cdot+\tilde{w}) - f(\cdot)\right\|^2_{H^m}}{|\tilde{w}|^{3p-5}} \d \tilde{w}.
	\end{align*}
	Note that the positive constant $C(p)$ in the previous estimate does not depend on $s_1$ since $3p-6 >1$ and by definition  $0\leq s_1 \leq 1$.
	
	Now we find again by the definition of the fractional Sobolev norm 
	\begin{align*}
	\int_{-1}^{0} \frac{ \left\|  f(\cdot+\tilde{w}) - f(\cdot)\right\|^2_{H^m}}{|\tilde{w}|^{3p-5}} \d \tilde{w} = \int_{-1}^{0} \sum_{k\in\Z} (1+k^2)^m |(\widehat{f(\cdot+\tilde{w}) - f(\cdot)})(k)|^2 \frac{d \tilde{w}}{|\tilde{w}|^{3p-5}},
	\end{align*}
	whose Fourier coefficients have the form
	\begin{align*}
	\left|(\widehat{f(\cdot+\tilde{w}) - f(\cdot)})(k)\right|^2 & =  \left|\int_0^1 f(u+\tilde{w}) e^{-2\pi i k u} \d u  - \int_0^1 f(u) e^{-2\pi i k u} \d u\right|^2 \\
	& = |e^{2\pi i k \tilde{w}} - 1|^2  | \hat{f}(k)|^2. 
	\end{align*}
	Since we get by Euler's formula 
	\begin{align} \label{eq:orderk}
	\begin{split}
	\int_{-1}^{0} |e^{2\pi i k \tilde{w}} - 1|^2  \frac{d \tilde{w}}{|\tilde{w}|^{3p-5}} & = 2 \int_0^1 (1-\cos(2\pi k\tilde{w}) ) \, \frac{d \tilde{w}}{\tilde{w}^{3p-5}} \\
	& \leq 2 (2\pi |k|)^{3p-6} \underbrace{\int_0^\infty \frac{1-\cos(\bar{w})}{\bar{w}^{3p-5}} \d \bar{w}}_{<\infty} \\
	& \leq \widetilde C(p)\,  |k|^{3p-6}, 
	\end{split}
	\end{align}
	for some positive constant $\widetilde C(p)$ only depending on $p$, 
	as well as  the elementary inequality 
	\begin{align}\label{eq:elementariest}
	|k|^{3p-6} \leq (1+k^2)^{\frac{3p-6}{2}}, 
	\end{align}
	we obtain together with Beppo Levi's monotone convergence theorem
	\begin{align*}
	\int_{-1}^{0} \frac{ \left\|  f(\cdot+\tilde{w}) - f(\cdot)\right\|^2_{H^m}}{|\tilde{w}|^{3p-5}} \d \tilde{w} & = \int_{-1}^{0} \sum_{k\in\Z} (1+k^2)^m | \hat{f}(k)|^2   |e^{2\pi i k w} - 1|^2\frac{d \tilde{w}}{|w|^{3p-5}} \\
	& \leq \widetilde C(p)  \sum_{k\in\Z} (1+k^2)^{m+\frac 32 p -3} | \hat{f}(k)|^2 \\
	& = \widetilde C(p) \|f\|_{H^{m+\frac 32 p -3}}^2. 
	\end{align*}
	After applying the same reasoning for the second factor, we finally achieve for \textbf{Case 1}
	\begin{align*}
	& \iint_D   \frac{ \left\| \left(f(\cdot+s_1w) - f(\cdot+s_1v) \right) \left(g(\cdot+s_2w) - g(\cdot+s_2v) \right) \right\|_{H^m}}{|v|^{p-2}|w|^{p-2}|v-w|^{p}}  \d v \d w \\
	& \leq C(m,p) \|f\|_{H^{m+\frac 32 p -3}} \|g\|_{H^{m+\frac 32 p -3}},
	\end{align*}
	for some new positive constant $C(m,p)$ depending on $m$ and $p$.

	In \textbf{Case 2}, we start by change of variables, cf.\ the proof of \cite[Lemma 4.4]{BlattReiter:2015:Menger}, such that
	\begin{align}\label{eq:startcase2}
	& \iint_D   \frac{ \left\| f(\cdot+s_1w) - f(\cdot+s_2w) \right\|^2_{H^m}}{|v|^{p-2}|w|^{p-2}|v-w|^{p}}  \d v \d w \nonumber \\
	& \leq \int_{0}^{\frac 23}  \frac{ \left\| f(\cdot+s_1w) - f(\cdot+s_2w) \right\|^2_{H^m}}{w^{p-2}}  \left( \int_0^{\frac 23 - w} \frac{1}{v^{p-2}(v+w)^{p}} \d v \right) \d w \nonumber \\
	& \leq \int_{0}^{\frac 23}  \frac{ \left\| f(\cdot+s_1w) - f(\cdot+s_2w) \right\|^2_{H^m}}{w^{p-2}}  \left( \int_0^{\infty} \frac{1}{(tw)^{p-2}(w(t+1))^{p}} w \d t \right) \d w\nonumber\\
	& \leq \int_{0}^{\frac 23}  \frac{ \left\| f(\cdot+s_1w) - f(\cdot+s_2w) \right\|^2_{H^m}}{w^{3p-5}} \underbrace{ \left( \int_0^{\infty} \frac{1}{t^{p-2}(1+t)^{p}} \d t \right) }_{<\infty} \d w \nonumber \\
	& \leq C (p) \int_{0}^{\frac 23}  \frac{ \left\| f(\cdot+s_1w) - f(\cdot+s_2w) \right\|^2_{H^m}}{w^{3p-5}}  \d w, 
	\end{align}
	where $C(p)$ is a positive constant depending on $p$.
	The definition of the fractional Sobolev norm gives us 
	\begin{align*}
	\int_{0}^{\frac 23}  \frac{ \left\| f(\cdot+s_1w) - f(\cdot+s_2w) \right\|^2_{H^m}}{w^{3p-5}}  \d w = \int_{0}^{\frac 23} \sum_{k\in\Z} (1+k^2)^m  |(\widehat{f(\cdot+s_1w) - f(\cdot+s_2w)})(k)|^2  \frac{dw}{w^{3p-5}}.
	\end{align*}
	We then observe for the Fourier coefficients of the series, recalling of the periodicity of $f$,
	\begin{align*}
	|(\widehat{f(\cdot+s_1w) - f(\cdot+s_2w)})(k)|^2 & =  \left|\int_0^1 f(u+s_1w) e^{-2\pi i k u}\d u - \int_0^1  f(u+s_2w)e^{-2\pi i k u}\d u\right|^2 \\
	& =  |e^{2\pi i k s_1 w} \hat{f}(k) - e^{2\pi i k s_2 w} \hat{f}(k)|^2 \\
	& = |e^{2\pi i k (s_1 -s_2)w}-1|^2 |e^{2\pi i k s_2 w}|^2 |\hat{f}(k)|^2, 
	\end{align*}
	and hence 
	\begin{align*}
	\int_{0}^{\frac 23}  \frac{ \left\| f(\cdot+s_1w) - f(\cdot+s_2w) \right\|^2_{H^m}}{w^{3p-5}}  \d w = \int_{0}^{\frac 23} \sum_{k\in\Z} (1+k^2)^m |\hat{f}(k)|^2   |e^{2\pi i k (s_1 -s_2)w}-1|^2 \frac{dw}{w^{3p-5}}.
	\end{align*}
	Now again by Euler's formula and change of variables, we get similarly to \eqref{eq:orderk}
	\begin{align*}
	\int_{0}^{\frac 23}  |e^{2\pi i k (s_1 -s_2)w}-1|^2 \frac{dw}{w^{3p-5}} & = 2 \int_{0}^{\frac 23}  1-\cos (2\pi  k (s_1 -s_2)w) \frac{dw}{w^{3p-5}} \leq \widetilde C(p) |k|^{3p-6}
	\end{align*}
    where $\widetilde C(p)$ is a positive constant depending on $p$ and we used that $\abs{s_1-s_2} \le 2$.
	Therefore, we obtain by \eqref{eq:elementariest} and Beppo Levi's monotone convergence theorem
	\begin{align}\label{eq:case2}
	\int_{0}^{\frac 23}  \frac{ \left\| f(\cdot+s_1w) - f(\cdot+s_2w) \right\|^2_{H^m}}{w^{3p-5}}  \d w & \leq \widetilde C(p)  \sum_{k\in\Z} (1+k^2)^{m+\frac 32 p -3} |\hat{f}(k)|^2  \nonumber\\
	& = \widetilde C (p) \|f\|^2_{H^{m+\frac 32 p -3}}. 
	\end{align}
	After applying the same arguments for the factor of $g$, we conclude in \textbf{Case 2}
	\begin{align*}
	& \iint_D   \frac{ \left\| \left(f(\cdot+s_1w) - f(\cdot+s_2w) \right) \left(g(\cdot+s_1w) - g(\cdot+s_2w) \right) \right\|_{H^m}}{|v|^{p-2}|w|^{p-2}|v-w|^{p}}  \d v \d w \\
	& \leq C(m,p) \|f\|_{H^{m+\frac 32 p -3}} \|g\|_{H^{m+\frac 32 p -3}}
	\end{align*}
	for some positive constant $C(m,p)$ depending on $m$ and $p$.

	\textbf{Case 3} directly follows from \textbf{Case 2}. \\
	Now only \textbf{Case 4} remains. Along the lines of \textbf{Case 1} and \textbf{Case 2}, in particular \eqref{eq:FouriercoeffCOV} and \eqref{eq:startcase2}, we transform with respect of the Fourier coefficients 
	\begin{align*}
	& \iint_D   \frac{ \left\| f(\cdot+v+s_1(w-v)) - f(\cdot+v+s_2(w-v)) \right\|^2_{H^m}}{|v|^{p-2}|w|^{p-2}|v-w|^{p}}  \d v \d w \\
	& \le \int_0^{2/3} \, \int_0^{2/3-w} \frac{ \left\| f(\cdot+s_1(w+v)) - f(\cdot+s_2(w+v)) \right\|^2_{H^m}}{|w|^{p-2}} \left(  \frac{1}{v^{p-2}(v+w)^p} \right) \d v  \d w \\
	& \leq \int_{0}^{2/3}  \frac{ \left\| f(\cdot+s_1\tilde{w}) - f(\cdot+s_2\tilde{w}) \right\|^2_{H^m}}{\tilde{w}^{3p-5}}  \underbrace{\left( \int_0^{\infty} \frac{1}{t^{p-2}(1+t)^{p}} \d t \right)}_{<\infty} \d \tilde{w}.
	\end{align*}
	We are now in the position to apply \eqref{eq:case2} and the statement for \textbf{Case 4}
	\begin{align*}
	& \iint_D   \left\| \left(f(\cdot+v + s_1(w-v)) - f(\cdot+v + s_1(w-v)) \right) \cdot\right.\\
    & \qquad \left.\cdot \left(g(\cdot+v + s_1(w-v)) - g(\cdot+v + s_1(w-v)) \right) \right\|_{H^m} \frac {\d v \d w} {|v|^{p-2}|w|^{p-2}|v-w|^{p}}  \\
	& \leq C(m,p) \|f\|_{H^{m+\frac 32 p -3}} \|g\|_{H^{m+\frac 32 p -3}}
	\end{align*}
	for some positive constant $C(m,p)$ only dependent on $m$ and $p$
	follows. 
\end{proof}

\subsection{Estimate of the remainder term \texorpdfstring{$\Rpt$}{R(p)}} 

We may find an estimate for of $\partial^l \Rpt(\gamma)$ with respect to a fractional Sobolev norm.
\begin{lemma}
    \label{lemma:estimateRemainder}
    Let $\gamma \in C^\infty(\R/\Z,\R^n)$ be parametrized by arc-length and $m > \tfrac 12$.
    Then there exist constants $C=C(\gamma,m,n,p)>0$ and
     $r= r(\gamma,m,n,p)>0$  such that
    \begin{align*}
    	& \norm[H^m]{\partial^l \Rpt(\gamma)} \\
    	& \leq C \sum_{j_1 + \ldots + j_{K-1} = l+1} \binom{l+1}{j_1, \ldots, j_{K-1}} \ p^{(3n)}_{j_1} \left( \left\{\frac{(|\alpha|+1)!}{r^{|\alpha|+1}} \right\}_{|\alpha|\leq j_1}, \{ \| \gamma^{(j+1)} \|_{H^{m+ \frac 32 p -3}}\}_{\substack{j =1, \ldots, j_1\\ i = 1, \ldots, 3n}} \right) \\
    	& \phantom{\le} \cdot \prod_{i=2}^{K-1} {\|\gamma^{(j_i+1)}\|_{H^{m+ \frac 32 p -3}}}.
    \end{align*}
    Note that the second set of coefficients of $p^{(3n)}_{j_1}$ is independent of $i$, we need that parameter only for dimensional reasons.
\end{lemma}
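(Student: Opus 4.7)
The plan is to begin from the explicit formula for $\partial^l \Rpt(\gamma)$ supplied by the preceding lemma, which writes it as a sum over $k = 1, \ldots, k_{\max}$ of integrals over $D \times [0,1]^K$ of $\partial_u^{l+1} g^p_k(u - s_{K-1}v - s_K w, v, w; s_1, \ldots, s_{K-2})$. By \thref{lemma:formRemainder}, each $g^p_k$ factors as a product of an analytic function $\GL^{(p)}$ of three ratios, the singular term $\Gamma(u, v, w, s_1, s_2)$, and $K - 2$ tensor copies of $\gamma'(u + \text{shift})$. I apply the generalized Leibniz rule to distribute the $l+1$ $u$-derivatives over $K - 1$ groups: index $j_1$ collects the derivatives falling on the combined $\GL^{(p)} \cdot \Gamma$ factor, while $j_2, \ldots, j_{K-1}$ each fall on one of the $K - 2$ tensor factors. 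This produces the multinomial coefficient $\binom{l+1}{j_1, \ldots, j_{K-1}}$.

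Next, I take the $H^m$-norm inside the integral and, since $m > \tfrac 12$, use the Banach algebra property \eqref{prop:Banachalgebra} to pass the norm inside the product of factors. For each tensor factor, $\partial_u^{j_i} \gamma'(u + \text{shift}) = \gamma^{(j_i+1)}(u + \text{shift})$, so translation-invariance of the $H^m$-norm gives $\|\gamma^{(j_i+1)}\|_{H^m}$, which I relax to $\|\gamma^{(j_i+1)}\|_{H^{m+\frac 32 p - 3}}$ using the continuous inclusion $H^{m + \frac 32 p - 3} \hookrightarrow H^m$ (valid since $\tfrac 32 p - 3 > \tfrac 12$ in our range). This yields the $\prod_{i=2}^{K-1}$ portion of the claim.

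For the remaining $j_1$ derivatives acting on $\GL^{(p)} \cdot \Gamma$, I split them once more between the two factors by Leibniz and treat each. Differentiating $\Gamma$ in $u$ produces the sums of products of shifted $\gamma$-derivative differences recorded at the start of Subsection \ref{subsec:FractionalLeibnizRule}; integration over $D$ together with \thref{thm:fracLeibnizrule} then absorbs the singular weight $|v|^{2-p}|w|^{2-p}|v-w|^{-p}$ and produces two factors $\|\gamma^{(\cdot+1)}\|_{H^{m+\frac 32 p - 3}}$. For $\GL^{(p)}$ applied to the three ratios $\frac{\difference{0,w}\gamma}{w}$, $\frac{\difference{0,v}\gamma}{v}$, $\frac{\difference{v,w}\gamma}{v - w}$, the simplicity and arc-length parametrization of $\gamma$ keep these in a compact subset of the analyticity domain of $\GL^{(p)}$, so Cauchy-type estimates yield outer-derivative bounds of the form $\frac{(|\alpha|+1)!}{r^{|\alpha|+1}}$ for some $r = r(\gamma) > 0$. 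The inner $u$-derivatives of the ratios, controlled in $H^m$ by $\|\gamma^{(j+1)}\|_{H^{m+\frac 32 p - 3}}$ via the integral identity $\frac{\difference{0,w}\gamma(u)}{w} = \int_0^1 \gamma'(u + sw)\,\d s$ (and analogues) combined with translation-invariance, furnish the inner-function data. Faà di Bruno's formula then assembles everything into precisely the Bell-type polynomial $p^{(3n)}_{j_1}$, where the superscript $3n$ reflects the three vector-valued ratios in $\R^n$.

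The main obstacle I anticipate is establishing uniformity of all intermediate bounds in $(v, w) \in D$ and $s_1, \ldots, s_K \in [0, 1]$, since only such uniformity permits absorbing the integrals over $D \times [0,1]^K$ into a single constant $C$ depending on $\gamma, m, n, p$. For the outer Faà di Bruno estimates, the uniformity comes from the existence of a fixed open neighborhood of the range of the three ratios inside the analyticity domain of $\GL^{(p)}$, ultimately a consequence of the bi-Lipschitz character of a simple arc-length curve. The fractional Leibniz rule \thref{thm:fracLeibnizrule} is itself uniform in $s_1, s_2 \in [0, 1]$, and the inner-derivative estimates for the ratios are uniform in $v, w$ via the integral representation above. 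Summing over $k = 1, \ldots, k_{\max}$ and absorbing $k_{\max}$ into $C$ then yields the claimed estimate.
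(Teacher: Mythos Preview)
Your approach is correct and matches the paper's. One bookkeeping point: the paper separates $\GL^{(p)}$ and $\Gamma$ from the outset (so the Leibniz rule runs over $K-2$ factors: $\GL^{(p)}$, $\Gamma$, and the $K-4$---not $K-2$---tensor copies of $\gamma'$), and then the binomial split arising in the fractional Leibniz estimate for $\partial^{k_2}\Gamma$ is merged with the outer multinomial to yield the $K-1$ indices $j_1,\dots,j_{K-1}$ of the statement; with your grouping, $j_1$ initially counts derivatives on the product $\GL^{(p)}\cdot\Gamma$, so an explicit re-indexing is still needed before $p^{(3n)}_{j_1}$ carries the correct subscript.
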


\begin{proof}
	First note that $\Gp \circ |.|$ is an analytic function away from zero and by the fundamental theorem of calculus 
	\[
	\begin{split}
		&\Gp\left(\frac {|\difference{v,w} \gamma|} {|v-w|}, \frac {|\difference{v,0} \gamma|} {|v|}, \frac {|\difference{w,0} \gamma|} {|w|} \right)\\
        &= (\Gp \circ |.|) \left(\int_0^1 \gamma'(u+tw)\d t, \int_0^1 \gamma'(u+tv)\d t, \int_0^1 \gamma'(u+t(w-v))\d t \right).
	\end{split}
	\]
	Now by the generalized Leibniz formula and Faà di Bruno's formula as in \thref{lem:FaadiBruno}, we observe
	\[
	\begin{split}
	& \frac {\d^{l+1}} {\d u^{l+1}} g_k^p(u,v,w;s_1, \ldots, s_{K-2})
	\\
	& = \sum_{k_1 + \ldots + k_{K-2} = l+1} \binom{l+1}{k_1, \ldots, k_{K-2}}
	\\
	&\cdot p^{(3n)}_{k_1}\Biggl(
	\set*{ \partial^\alpha (\Gp \circ |. |) \left(\int_0^1 \gamma'(u+tw)\d t, \int_0^1 \gamma'(u+tv)\d t, \int_0^1 \gamma'(u+t(w-v))\d t \right)}_{\abs{\alpha}\leq k_1}, 
	\\
	& \qquad 
	\left\{ \int_0^1 \gamma^{(j+1)}_i (u+t(x-y)) \d t \right\}_{\substack{i=1,\ldots,n, \, j =1, \ldots, k_1 ,\\ (x,y) = (v,w), (v,0), (w,0)}} \Biggr)
	\\
	& \cdot  \frac {\d^{k_2}} {\d u^{k_2}} \Gamma(u,v,w,s_1,s_2)  \\
	& \cdot  \left( \bigotimes_{i=3}^{K_1} \gamma^{(k_i + 1)}(u+ s_i v) \right) \otimes \left(\bigotimes_{i=K_1 +1}^{K_2} \gamma^{(k_i + 1)}(u + s_i w) \right) \otimes \left(\bigotimes_{i=K_2 +1}^{K-2} \gamma^{(k_i + 1)}(u+v + s_i (w-v)) \right).
	\end{split}
	\]
	We then apply the $H^m$-norm on $\partial^l \Rpt(\gamma)$ for some $m> \tfrac 12$ and obtain by the Banach algebra property of $H^m (\R/\Z, \R^n)$, cf. \eqref{prop:Banachalgebra},  together with \thref{lemma:estimateFaaDiBrunoBanachAlgebra} that 
	\[
    \numberthis
    \label{lemma:estimateRemainder:eq:firstStep}
	\begin{split}
	& \left\| \partial^l \Rpt(\gamma) \right\|_{H^m}
	\\
	& \leq c(m,n,p) \sum_{k=1}^{k_{\max}} \iint_{D} \idotsint\limits_{[0,1]^K} \left\| \partial^{l+1} g_k^p(\cdot,v,w;s_1, \ldots, s_{K-2})\right\|_{H^m} \d\theta_{1}\cdots\d\theta_{K}\d v \d w
	\\
	& \leq \tilde c(m,n,p,K)   \sum_{k=1}^{k_{\max}} \iint_{D} \idotsint\limits_{[0,1]^K}  \sum_{k_1 + \ldots + k_{K-2} = l+1} \binom{l+1}{k_1, \ldots, k_{K-2}}
	\\
	&\cdot p^{(3 n)}_{k_1}\Biggl(\!
	\set*{b^{\abs{\alpha}} \norm*[H^m]{\partial^\alpha (\Gp \circ |.|) \left(\int_0^1 \gamma'(\cdot+tw)\d t, \int_0^1 \gamma'(\cdot+tv)\d t, \int_0^1 \gamma'(\cdot+t(w-v))\d t  \right)}}_{\hspace{-0.4em}\abs{\alpha}\leq k_1}\hspace{-2em},
	\\
	& \qquad 
	\left\{ \left\| \int_0^1 \gamma^{(j+1)}_i (\cdot+t(x-y)) \d t \right\|_{H^m}\right\}_{\substack{i=1,\ldots,n, \, j =1, \ldots, k_1 ,\\ (x,y) = (v,w), (v,0), (w,0)}} \Biggr)
	\\
	& \cdot \left\|\partial^{k_2}\Gamma(\cdot,v,w,s_1,s_2) \right\|_{H^m}
	\cdot \prod_{i=3}^{K-2} \norm*[H^m]{\gamma^{(k_i+1)}} \d\theta_{1}\cdots\d\theta_{K}\d v \d w
	\end{split}
	\]
    where $b = b(m) := \sup_{\eta_1,\eta_2 \in H^m(\R/\Z,\R)} \frac {\norm[H^m]{\eta_1}\norm[H^m]{\eta_2}} {\norm[H^m]{\eta_1\eta_2}}$ is a finite constant stemming from the Banach algebra property  and $c(n,m,p)$ is an upper bound for the operator norms of $\cdot \otimes e_o : H^m(\R/\Z,\R^n) \to H^m(\R/\Z,\R^n)$.
    
	Since $G^{(p)}\circ |.|$ is an analytic function away from the origin and $\gamma$ is smooth with bounded bilipschitz constant \cite[Proposition~2.1]{BlattReiter:2015:Menger},
	 we get from the proof of \cite[Lemma~7.2]{BlattVorderobermeier:2019:CriticalMoebius} that 
	$$
	b^{\abs{\alpha}}\norm*[H^m]{\partial^\alpha (\Gp \circ |.|) \left(\int_0^1 \gamma'(\cdot+tw)\d t, \int_0^1 \gamma'(\cdot+tv)\d t, \int_0^1 \gamma'(\cdot+t(w-v))\d t  \right)} 
	\leq C_{G}  \frac{(|\alpha|+1)!}{r^{|\alpha|+1}}
	$$
	for some positive constants $C_{G} = C_{G} (\gamma, m, n, p)$ and $r= r(\gamma,m,n,p)$ which also absorb $b$. Furthermore, we have by the triangle inequality
	\[
		\left\| \int_0^1 \gamma^{(j+1)}_i (\cdot+t(x-y)) \d t \right\|_{H^m} \leq \| \gamma^{(j+1)}_i \|_{H^m} \leq  \| \gamma^{(j+1)} \|_{H^m}.
	\]
	Note that these bounds are independent of $s_3, \ldots, s_{K-2}$.
	
	To estimate the term $\iint_D \iint_{[0,1]^2} \left\| \partial^{k_2} \Gamma(\cdot,v,w,s_1,s_2) \right\|_{H^m} \d \theta_1 \d \theta_2 \d w \d v$, we first observe by the generalized Leibniz rule
	\[
	\begin{split}
		& \frac {\d^{k_2}} {\d u^{k_2}} \Gamma(u,v,w,s_1,s_2)  = \frac{1}{|v|^{p-2}|w|^{p-2}|v-w|^{p}} \\
		& \cdot \sum_{o=0}^{k_2} \binom{k_2}{o} \left(\gamma^{(k_2-o+1)}(u+x_1) - \gamma^{(k_2-o+1)}(u+x_2) \right) \otimes \left( \gamma^{(o+1)}(u+x_3) - \gamma^{(o+1)}(u+x_4) \right),
	\end{split}
	\]
	where we have the four cases 
	\[
	\begin{split}
	\textbf{Case 1:} & \quad (x_1,x_2,x_3,x_4) =(s_1w,s_1v, s_2w, s_2v), \\
	\textbf{Case 2:} & \quad (x_1,x_2)=(x_3,x_4) = (s_1w,s_2w), \\
	\textbf{Case 3:} & \quad (x_1,x_2)=(x_3,x_4) = (s_1v, s_2v), \\
	\textbf{Case 4:} & \quad (x_1,x_2)=(x_3,x_4) = (v+s_1(w-v), v+s_2(w-v)).
	\end{split}
	\]
	Hence we get by \thref{thm:fracLeibnizrule} 
	\[
	\begin{split}
	&\iint_{[0,1]^2} \iint_D  \left\| \partial^{k_2} \Gamma(\cdot,v,w,s_1,s_2) \right\|_{H^m}  \d w \d v \d \theta_1 \d \theta_2  \\
	& \leq C(m,n,p) \sum_{i=1}^n \sum_{j=1}^n \sum_{o=0}^{k_2} \binom{k_2}{o} \iint_{[0,1]^2} \iint_D  \frac{1}{|v|^{p-2}|w|^{p-2}|v-w|^{p}} \\
	& \cdot \norm[H^m]{\gamma_i^{(k_2-o+1)}(\cdot +x_1) - \gamma_i^{(k_2-o+1)}(\cdot +x_2)} \norm[H^m]{\gamma_j^{(o+1)}(\cdot+x_3) - \gamma_j^{(o+1)}(\cdot +x_4)} \d w \d v \d \theta_1 \d \theta_2\\
	& \leq \widetilde C(m,n,p) \sum_{i=1}^n \sum_{j=1}^n \sum_{o=0}^{k_2} \binom{k_2}{o} \iint_{[0,1]^2} \| \gamma_i^{(k_2-o+1)}\|_{H^{m+ \frac 32 p -3}} \|\gamma_j^{(o+1)}\|_{H^{m+ \frac 32 p -3 }}  \d \theta_1 \d \theta_2 \\
	& \leq \widetilde C(m,n,p) \sum_{o=0}^{k_2} \binom{k_2}{o} \| \gamma^{(k_2-o+1)}\|_{H^{m+ \frac 32 p -3}} \|\gamma^{(o+1)}\|_{H^{m+ \frac 32 p -3 }}
	\end{split}
	\]
    where $C(m,n,p)$ is an upper bound for the operator norms of $e_i\otimes e_j$ on $(H^m(\R/\Z,\R))^2$.
    We may combine the sum over $o$ and the binomial coefficient $\binom{k_2}{o}$ with the sum over $k_1, \ldots, k_{K-2}$ and the multinomial coefficient $\binom{l+1}{k_1,\ldots,k_{K-2}}$ from \eqref{lemma:estimateRemainder:eq:firstStep} into the sum over $j_1 + \ldots + j_{K-1}$ and the multinomial coefficient $\binom{l+1}{j_1, \ldots, j_{K-1}}$ by setting $j_2 := o$, $j_{K-1} := k_2-o$ and $j_i := k_i$ for all other $i$.
    This allows us to integrate the two derivatives of $\gamma$ into the bigger product.
    
    Then, the
	statement follows from the embedding $H^s(\R/\Z,\C^n) \subseteq H^t(\R/\Z,\C^n)$ for any $s>t$, $s,t \geq 0$. 
    Since these estimates are uniform over all $g_k$, we may use an additional factor of $k_{\max}$ in order to leave out the sum.
\end{proof}
Note that $\Gamma$ is the only term for whose upper bound we need the higher-order $H^{m+\frac 3 2 p -3}$-norms.
We estimated all $H^m$-norms by these in order to get a simpler structure which will serve us in the proof of the main theorem, especially in \thref{lemma:iteratedEstimateMainTerm}.

\section{Proof of the main statement}
\label{sec:MainProof}

We are finally in the position to derive the key ingredient, an iterated recursive estimate for critical points $\gamma$ of the generalized integral Menger curvature $\Mpz$ with respect to a Sobolev norm. Based on that, we prove the main result of this paper.

\subsection{Recursive estimate and iteration}
\label{subsec:RecursiveEstimate}

In order to show analyticity of a curve, we need to bound its higher derivatives. One possible approach is to bound derivatives by the lower order ones and iterate this procedure inductively. In the following we show how such recursive estimates can be obtained from the Euler-Lagrange equations. 

Let  $\gamma\in C^\infty(\R/\Z,\R^n)$ be  a critical point of $\Mpz$ with respect to fixed length and $p\in(\tfrac 73,\tfrac 83)$.
By the Lagrange multiplier theorem (see e.g.\ \cite[§9.3~Theorem 1]{Luenberger:1998:OptimizationVectorSpace}), we have the existence of $\lambda \in \R$ such that
\[
0 = \delta\Mpz(\gamma,h) + \lambda \delta \LL(\gamma,h).
\]
Seeing as by integration by parts and periodicity, we have
\[
\delta\LL(\gamma,h) = \int_{\R/\Z} \left<\gamma'(u),h'(u)\right> \d u = - \int_{\R/\Z} \left<\gamma''(u),h(u)\right> \d u
\]
for $\gamma$ parametrized by arc-length.
Combining the decomposition of the first variation of $\Mpz$ mentioned in \eqref{eq:firstVariation} with the $L^2$-forms of the main term $Q^{(p)}$ in \thref{lemma:L2MainTermViaFourier} and the remainder term $R^{(p)}$ in \thref{lemma:L2formRemainder}, we may write
\[
0 =  \int_{\R/\Z} \left<12 \bigl(\Qpt(\gamma)(u) + \Rpt(\gamma)(u)\bigr) - \lambda \gamma''(u), h(u)\right> \d u.
\]
Thus, we have
\[
\label{eq:RelationMainAndRemainderTerms}
\numberthis
\Qpt(\gamma) = \frac {\lambda} {12} \gamma'' - \Rpt(\gamma).
\]
From this, we may easily deduce the following.
\begin{corollary}
	\label{cor:estimateMainTermViaRemainder}
	Let $\gamma \in C^\infty(\R/\Z,\R^n)$ be a critical point of $\Mpz$ with respect to fixed length and parametrized by arc-length.
	For $m>\frac 1 2$ there exist constants 
	$C=C(\gamma,m,n,p)>0$, $r=r(\gamma,m,n,p)>0$, and $\mu=\mu(\gamma, n,p)$ such that
	\[
	\begin{split}
	&\norm[H^{m+3p-7}]{\gamma^{(l+3)}}\\
	&\le C \sum_{j_1 + \ldots + j_{K-1} = l+1} \binom{l+1}{j_1, \ldots, j_{K-1}} \ p^{(3n)}_{j_1} \left( \left\{\frac{(|\alpha|+1)!}{r^{|\alpha|+1}} \right\}_{|\alpha|\leq j_1}, \{ \| \gamma^{(j+1)} \|_{H^{m+ \frac 32 p -3}}\}_{\substack{j =1, \ldots, j_1\\ i = 1, \ldots, 3n}} \right) \\
	& \phantom{\le} \cdot \prod_{i=2}^{K-1} {\|\gamma^{(j_i+1)}\|_{H^{m+ \frac 32 p -3}}} + \mu \norm[H^m]{\gamma^{(l+2)}}.
	\end{split}
	\]
\end{corollary}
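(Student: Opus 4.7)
The plan is to simply assemble the three pieces that have already been put in place above. The Euler--Lagrange equation \eqref{eq:RelationMainAndRemainderTerms} is the hinge: it trades the highest--order behavior encoded in $\Qpt(\gamma)$ for the lower--order remainder $\Rpt(\gamma)$ plus a multiple of $\gamma''$. Differentiating $l$ times (everything is smooth by assumption) gives
\[
\partial^l \Qpt(\gamma) = \tfrac{\lambda}{12}\,\gamma^{(l+2)} - \partial^l \Rpt(\gamma),
\]
so by the triangle inequality in $H^m$,
\[
\norm[H^m]{\partial^l \Qpt(\gamma)} \le \tfrac{|\lambda|}{12}\norm[H^m]{\gamma^{(l+2)}} + \norm[H^m]{\partial^l \Rpt(\gamma)}.
\]

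Next I would apply \thref{cor:Ql} on the left of this inequality, which gives the pointwise bound
\[
\norm[H^{m+3p-7}]{\gamma^{(l+3)}} \le \widetilde{C}\norm[H^m]{\partial^l\Qpt(\gamma)}.
\]
Combining the two inequalities, the $\gamma^{(l+2)}$ term produces exactly the $\mu\norm[H^m]{\gamma^{(l+2)}}$ summand with $\mu := \widetilde{C}|\lambda|/12$, where $\widetilde{C}$ depends only on $p$ and $|\lambda|$ depends on $\gamma, n, p$ (the Lagrange multiplier coming from \cite[§9.3 Theorem 1]{Luenberger:1998:OptimizationVectorSpace} applied to the length constraint).

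Finally, for the remainder piece I invoke \thref{lemma:estimateRemainder} directly: it provides an estimate of $\norm[H^m]{\partial^l\Rpt(\gamma)}$ by precisely the multinomial/Fa\`a di Bruno expression appearing in the statement, with constants $C=C(\gamma,m,n,p)$ and $r=r(\gamma,m,n,p)$. Multiplying that constant by $\widetilde{C}$ (and renaming) yields the first summand of the claimed bound. There is no genuine obstacle here --- the technical work has been done in \thref{cor:Ql}, \thref{lemma:estimateRemainder}, and the derivation of \eqref{eq:RelationMainAndRemainderTerms}; the only thing to be careful about is bookkeeping: the hypothesis $m>\tfrac12$ is exactly what \thref{lemma:estimateRemainder} needs (so the Banach algebra property of $H^m$ is available), and the Lagrange multiplier $\lambda$ must be absorbed into a single constant $\mu$ depending on $\gamma, n, p$ but \emph{not} on $m$ or $l$.
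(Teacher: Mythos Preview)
Your proposal is correct and follows exactly the same approach as the paper: combine \thref{cor:Ql} with the differentiated Euler--Lagrange identity \eqref{eq:RelationMainAndRemainderTerms} and then invoke \thref{lemma:estimateRemainder} for the remainder term. The paper's proof is even more terse than yours, but the argument and the bookkeeping of constants are identical.
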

\begin{proof}
	By \thref{cor:Ql}, we may estimate $\norm[H^{m+3p-7}]{\gamma^{(l+3)}} \le \widetilde C \norm[H^m]{\partial^l\Qpt(\gamma)}$.
	Noting that $\widetilde C$ only depends on $p$ and $n$, we only have to combine this estimate with \eqref{eq:RelationMainAndRemainderTerms} and \thref{lemma:estimateRemainder}. 
\end{proof}

The obtained estimate is already recursive, however, the difference in the order of differentiability is less than 1 since
\[
	3p-7 < \tfrac 32 p -3
\]
for any $p\in (\tfrac 73, \tfrac 83)$. This can lead to substantial problems in the iteration of the recursive estimate to bound higher derivatives. Therefore, we iterate the recursive estimate beforehand to get a difference of order at least 1 in the recursion.

To make the structure of the estimate more visible, set
\[
\numberthis
\label{eq:definitionPhi} 
\begin{split}
&\Phi(l,n,r,K;x_0,\ldots,x_l) :=\\
&\sum_{j_1 + \ldots + j_{K-1} = l} \binom{l}{j_1,\ldots,j_{K-1}} p_{j_1}^{(3n)}\left( \left\{ \frac {(\abs{\alpha} +1)!} {r^{\abs{\alpha} +1} } \right\}_{\abs{\alpha} \leq j_1}, \left\{ x_j \right\}_{\substack{j=1,\ldots,j_1\\ i=1,\ldots, 3n}} \right) \cdot \prod_{i=2}^{K-1} x_{j_i}
\end{split}
\]
and note that $\Phi$ is non-decreasing in the $x_i$.
The upper bound from \thref{cor:estimateMainTermViaRemainder} may be written as 
\[
\numberthis
\label{eq:upperBoundMainTermPhi}
C \Phi\left(l+1,n,r,K;\norm[H^{m+\frac 3 2 p -3}]{\gamma'}, \ldots, \norm[H^{m+\frac 3 2 p -3}]{\gamma^{(l+2)}}\right) + \mu \norm[H^m]{\gamma^{(l+2)}}.
\]

\begin{lemma}
	\label{lemma:iteratedEstimateMainTerm}
	Let $\gamma \in C^\infty(\R/\Z,\R^n)$ parametrized by arc-length be a critical point of $\Mpz$ with respect to fixed length and $m\geq \frac 5 2$.
	Then, there exist constants $\widehat C = \widehat C>0$, $\mu=\mu$ and $r>0$ all depending on $\gamma,m,n$ and $p$ 
	such that
	\[
	\begin{split}
	&\norm[H^{m+3p-7}]{\gamma^{(l+3)}}\\
	& \le C \Phi\Bigl( l+1, n, r,K;\widehat{C}\Phi\left[1,n,\frac {r} {2\pi},K; \norm[H^m]{\gamma}, \norm[H^m]{\gamma'}\right] + \mu \norm[H^m]{\gamma'}, \ldots,\\
	&\adjustedalignment{C \Phi\Bigl( l+1, n, r,K;\widehat{C}}{\widehat{C}}\Phi\left[l+2,n,\frac {r} {2\pi},K; \norm[H^m]{\gamma}, \ldots, \norm[H^m]{\gamma^{(l+2)}}\right] + \mu \norm[H^m]{\gamma^{(l+2)}}\Bigr)\\
	&+\mu \norm[H^m]{\gamma^{(l+2)}}.
	\end{split}
	\]
\end{lemma}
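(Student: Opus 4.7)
The strategy is to iterate Corollary~\ref{cor:estimateMainTermViaRemainder} once: apply it first to $\norm[H^{m+3p-7}]{\gamma^{(l+3)}}$ to obtain the outer bound, and then to each inner norm $\norm[H^{m+\frac 3 2 p - 3}]{\gamma^{(k)}}$ appearing as an argument of the outer $\Phi$. A single application widens the total Sobolev order by only $\tfrac 3 2 p - 3 \in (\tfrac 1 2, 1)$, too little for the subsequent majorant argument; iterating once brings the gap to $3p - 6 \in (1, 2)$, which suffices.

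The first step is just to rewrite the outer estimate of Corollary~\ref{cor:estimateMainTermViaRemainder} in the form \eqref{eq:upperBoundMainTermPhi} so that the arguments $x_k$ of $\Phi$ are exactly the norms $\norm[H^{m+\frac 3 2 p - 3}]{\gamma^{(k+1)}}$. For the inner step, fix $k \in \set{3, \ldots, l+2}$ and reapply Corollary~\ref{cor:estimateMainTermViaRemainder} with $l$ replaced by $k-3$ and $m$ replaced by $\tilde m := m + 4 - \tfrac 3 2 p$. The hypothesis $\tilde m > \tfrac 1 2$ is met because $m \ge \tfrac 5 2$ and $p < \tfrac 8 3$ give $\tilde m \ge m \ge \tfrac 5 2$. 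By construction, $\tilde m + 3p - 7 = m + \tfrac 3 2 p - 3$ matches the target Sobolev index on the left, and $\tilde m + \tfrac 3 2 p - 3 = m + 1$, so the output reads
\[
\norm[H^{m+\frac 3 2 p - 3}]{\gamma^{(k)}} \le C\,\Phi(k-2, n, r, K; \norm[H^{m+1}]{\gamma'}, \ldots, \norm[H^{m+1}]{\gamma^{(k-1)}}) + \mu\,\norm[H^{\tilde m}]{\gamma^{(k-1)}}.
\]
The elementary Fourier identity $\norm[H^{m+1}]{f}^2 = \norm[H^m]{f}^2 + (2\pi)^{-2}\norm[H^m]{f'}^2$ then gives $\norm[H^{m+1}]{\gamma^{(j)}} \le \norm[H^m]{\gamma^{(j)}} + (2\pi)^{-1}\norm[H^m]{\gamma^{(j+1)}}$, and the factor $(2\pi)^{-1}$ is what ends up rescaling $r$ to $r/(2\pi)$ in the factorial coefficients $(\abs{\alpha}+1)!/r^{\abs{\alpha}+1}$ of the Faà di Bruno polynomials inside $\Phi$. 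For the two low-order cases $k=1,2$, where Corollary~\ref{cor:estimateMainTermViaRemainder} does not apply, $\norm[H^{m+\frac 3 2 p - 3}]{\gamma^{(k)}}$ is a fixed finite number depending only on $\gamma$, which can be dominated by $\widehat C$ times any non-vanishing term of $\Phi[k, n, r/(2\pi), K; \cdot]$ by choosing $\widehat C$ large enough. Substituting these inner estimates into the $k$-th argument of the outer $\Phi$ and using its monotonicity in the $x_i$-variables delivers the claimed form.

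The principal obstacle is a structural repackaging. The inner application literally yields $\Phi(k-2, n, r, K; \cdot)$ with argument indices $0, \ldots, k-2$, while the statement asks for $\Phi[k, n, r/(2\pi), K; \cdot]$ with argument indices $0, \ldots, k$. Substituting $\norm[H^{m+1}]{\gamma^{(j+1)}} \le \norm[H^m]{\gamma^{(j+1)}} + (2\pi)^{-1}\norm[H^m]{\gamma^{(j+2)}}$ expands each factor $x_{j_i}$ in $\Phi$ into a sum whose effective derivative indices are shifted by $1$ or $2$. Reorganizing these into $\Phi[k, n, r/(2\pi), K; \cdot]$ is a combinatorial bookkeeping step that exploits the non-negativity of the coefficients in $\Phi$, the polynomial structure of the Faà di Bruno terms $p^{(3n)}_{j_1}$, and a comparison $\binom{l}{\cdot} \le \binom{l+c}{\cdot}$ for small $c \le 2$; the extra numerical factors, including the powers of $2\pi$, are absorbed into $\widehat C$.
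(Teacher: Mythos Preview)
Your overall strategy—iterating Corollary~\ref{cor:estimateMainTermViaRemainder} once—matches the paper's. However, your execution of the inner step diverges from the paper's in a way that creates a genuine gap.

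You apply Corollary~\ref{cor:estimateMainTermViaRemainder} at the \emph{higher} level $\tilde m = m + 4 - \tfrac 3 2 p$, which lands the inner arguments in $H^{m+1}$, and then propose to descend to $H^m$ via the identity $\norm[H^{m+1}]{f}^2 = \norm[H^m]{f}^2 + (2\pi)^{-2}\norm[H^m]{f'}^2$. This produces a two-term bound $x_j \le y_{j+1} + (2\pi)^{-1} y_{j+2}$ and, more seriously, leaves you with $\Phi(k-2,\ldots)$ while the lemma requires $\Phi(k,\ldots)$. Your claim that the passage from one to the other is ``combinatorial bookkeeping'' supported by $\binom{l}{\cdot} \le \binom{l+c}{\cdot}$ is not justified: once you expand each argument as a sum, the resulting monomials have variable total weight, and there is no evident way to dominate the resulting expression term-by-term by $\Phi(k,n,r/(2\pi),K;\cdot)$ with a constant independent of $k$. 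The Fa\`a di Bruno polynomial $p_{j_1}^{(3n)}$ encodes weighted partitions of $j_1$, and substituting $x_j \mapsto y_{j+1}+cy_{j+2}$ into it does not produce something bounded by a fixed multiple of $p_{j_1'}^{(3n)}$ for any single $j_1'$. This is the missing idea.

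The paper avoids the whole issue by first using Lemma~\ref{lemma:equivalentSobolevNormsDerivative} to trade two Sobolev orders for two derivatives, $\norm[H^{m+\frac 3 2 p - 3}]{\gamma^{(\tilde l+1)}} \le \norm[H^{m+\frac 3 2 p - 5}]{\gamma^{(\tilde l+3)}}$, and then applying Corollary~\ref{cor:estimateMainTermViaRemainder} at the \emph{lower} level $m' = m + 2 - \tfrac 3 2 p$ (still $> \tfrac 1 2$ since $m \ge \tfrac 5 2$). This yields $\Phi(\tilde l+1,\ldots)$ with the correct first index already, and with arguments in $H^{m'+\frac 3 2 p - 3} = H^{m-1}$. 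A single application of the left inequality in Lemma~\ref{lemma:equivalentSobolevNormsDerivative} then gives the clean one-term substitution $\norm[H^{m-1}]{\gamma^{(j+1)}} \le 2\pi \norm[H^m]{\gamma^{(j)}}$, which preserves the index $j$ exactly and is handled by Lemma~\ref{lemma:estimateAnalyticFaaDiBruno}: the factor $2\pi$ is absorbed into $r \mapsto r/(2\pi)$, with no repackaging needed. The $\tilde l = 0$ and $\tilde l = 1$ cases require no special treatment in this route.
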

\begin{proof}
	Using \thref{cor:estimateMainTermViaRemainder} and \eqref{eq:upperBoundMainTermPhi}, it suffices to bound $\norm[H^{m + \frac 3 2 p -3}]{\gamma^{(\tilde l +1)}}$ for $\tilde l \in \{0, \ldots, l+1\}$.
	Since $m\ge \frac 52$, we obtain $m + \frac 3 2 p -5 >\frac 1 2$ and may use \thref{lemma:equivalentSobolevNormsDerivative}, \thref{cor:Ql} and \thref{cor:estimateMainTermViaRemainder} to estimate
	\[  
	\begin{split}
	\norm[H^{m + \frac 3 2 p -3}]{\gamma^{(\tilde l +1)}}
	&\le \norm[H^{m + \frac 3 2 p -5}]{\gamma^{(\tilde l +3)}} \\
	&\le \widetilde C(p) C \Phi\left({\tilde l}+1,n,r,K;\norm[H^{m-1}]{\gamma'}, \ldots, \norm[H^{m-1}]{\gamma^{({\tilde l}+2)}}\right) + \mu \norm[H^{m + 2 - \frac 3 2 p}]{\gamma^{({\tilde l}+2)}}.
	\end{split}    
	\]
	It holds that $m+2-\frac 3 2 p < m- \frac 3 2 < m-1$ and by \thref{lemma:equivalentSobolevNormsDerivative}, $\norm[H^{m-1}]{\gamma^{(j+1)}} \le 2 \pi \norm[H^m]{\gamma^{(j)}}$.
	A look at the definition of $\Phi$ together with \thref{lemma:estimateAnalyticFaaDiBruno} enables us to estimate the above by
	\[
	\widetilde C(p) C (2\pi)^{K-2} \Phi\left({\tilde l}+1,n,\frac {r} {2\pi},K;\norm[H^{m}]{\gamma}, \ldots, \norm[H^m]{\gamma^{({\tilde l}+1)}}\right) + \tilde \mu \norm[H^{m}]{\gamma^{({\tilde l}+1)}}
	\]
	for $\tilde \mu := \tilde\mu(\gamma,m,n,p)>0$.
	Relabeling $\mu := \max\set{\mu,\tilde \mu}$ gives the desired statement.
\end{proof}

\subsection{Proof of Theorem \ref{thm:main}} 

Now we are ready to proof the main statement of this paper by the method of majorants.

\begin{proof}[Proof of \thref{thm:main}]
	Recall we have given a simple closed curve
    \[
        \gamma = (\gamma_1,\ldots, \gamma_n) \in C^\infty(\R/\Z,\R^n)
    \]
	parametrized by arc-length which is a critical point of integral Menger curvature $\Mpz + \lambda \mathcal{L}$, subject to a fixed length constraint, where $p\in (\tfrac 73, \tfrac 83)$ and $\lambda \in \R$.
    We define 
    \[
        a_l := \norm[H^\mf]{\gamma^{(l)}}
    \]
	for all integers $l\geq 0$ by using the smoothness of the curve $\gamma$. 
	Our aim is show that there exist positive constants $C_\gamma$ and $r_\gamma$ such that we have for any integer $l\geq 0$
	\begin{align*}
	   a_l \leq C_\gamma \frac{l!}{r_\gamma^l};
	\end{align*}
	from which the analyticity of the critical point $\gamma$ on $\R/\Z$ immediately follows by \thref{cor:TayloranalyticSob}.

	First we establish a recursive estimate for the terms $a_l$. We begin with applying the criticality of the curve $\gamma$ and
    \thref{lemma:iteratedEstimateMainTerm} to obtain
    \[
        \label{eq:recursiveEstimateA}
        \numberthis
        \begin{split}
        	a_{l+3} \leq & \norm[H^{\mf + 3p - 7}]{ \gamma^{(l+3)} }\\
            \leq & C \Phi\Bigl( l+1, n, r,K;\widehat{C}\Phi\left[1,n,\frac {r} {2\pi},K; \norm[H^\mf]{\gamma}, \norm[H^\mf]{\gamma'}\right] + \mu \norm[H^\mf]{\gamma'}, \ldots,\\
            &\adjustedalignment{C \Phi\Bigl( l+1, n, r,K;\widehat{C}}{\widehat{C}}\Phi\left[l+2,n,\frac {r} {2\pi},K; \norm[H^\mf]{\gamma}, \ldots, \norm[H^\mf]{\gamma^{(l+2)}}\right] + \mu \norm[H^\mf]{\gamma^{(l+2)}}\Bigr)\\
            &+  \mu \norm[H^\mf]{\gamma^{(l+2)}}\\
            = & C \Phi\Bigl( l+1, n, r,K;\widehat{C}\Phi\left[1,n,\frac {r} {2\pi},K; a_0, a_1\right] + \mu a_1, \ldots,\\
            &\adjustedalignment{C \Phi\Bigl( l+1, n, r,K;\widehat{C}}{\widehat{C}}\Phi\left[l+2,n,\frac {r} {2\pi},K; a_0, \ldots, a_{l+2}\right] + \mu  a_{l+2}\Bigr) + \mu a_{l+2}.
    	\end{split}
    \]
	for positive constants $C, \widehat C, r, \mu$, all $l\ge 0$ and some $K \in \N$.
    
    Next, we try to find $\tilde a_l$ that satisfy this recursive formula with equality and such that the first three coefficients $\tilde a_0, \tilde a_1$ and $\tilde a_2$ are greater than or equal to $a_0,a_1$ and $a_2$, respectively.
    
    Define
    \begin{alignat*}{2}
        G: \R^{3n} \supseteq B_{\tilde \epsilon}(0) &\to \R^{3n},\\
        G_i(z) &:= \widehat C \left( 1 + \frac {3n a_0 - \sum_{k=1}^{3n} z_k} {\frac {r} {2\pi}} \right)^{-2} z_i^{K-2} + \mu z_i,\\
        F: \R^{3n} \supseteq B_{\tilde \epsilon}(0) &\to \R^{3n},\\
        F_i(y) &:= C \left( 1 + \frac {3n DG_1((a_0,\ldots,a_0)^\transpose)(a_1, \ldots,a_1)^\transpose  - \sum_{k=1}^{3n} y_k} {r} \right)^{-2} y_i^{K-2},\\
    \end{alignat*}
    for $\tilde \epsilon := \min\{3n a_0 + \frac {r} {2\pi}, 3n DG_1((a_0,\ldots,a_0)^\transpose)(a_1, \ldots,a_1)^\transpose + r\}$.
    These functions are clearly analytic and by the Cauchy-Kovalevsky \thref{thm:CK}, there is an analytic $c: (-\epsilon,\epsilon) \to \R^{3n}$ satisfying 
    \[
    \numberthis
    \label{eq:majorantODE}
    \left\{
        \begin{split}
            c''(t) &= \overline{C} \cdot \bigl(F(DG(c(t)) c'(t)) + \mu c'(t)\bigr),\\
            c(0) &= (a_0,\ldots,a_0)^\transpose,\\
            c'(0) &= (a_1,\ldots,a_1)^\transpose,
        \end{split}
    \right.
    \]
    where 
    \[
        \overline{C} := \max\left\{1, \frac{a_2} {F_1(DG((a_0,\ldots,a_0)^\transpose)(a_1,\ldots,a_1)^\transpose)} \right\}.
    \]
    A little further along, we give a formula for all derivatives of $G(c(t))$ which implies that $\overline C$ is well-defined.
    
    Since all entries of the right hand side in \eqref{eq:majorantODE} are identical given the initial values, we have that $c_i(t) = c_1(t)$ for all valid $i$ and $t$.
    Setting $\tilde a_l := c_1^{(l)}(0)$, we have by definition that 
    \[
        \label{eq:tildeAFirstThreeTerms}
        \numberthis
        \tilde a_l \ge a_l \text{ for } l=0,1,2.
    \]
    It remains to show the recursion.
    
    An application of Leibniz' rule and \eqref{universalpoly} yields
    \[
        \label{eq:derviativeGc}
        \numberthis
        \begin{split}
            &\restrict{\frac {\d^l} {\d t^l} G_i(c(t))}{t=0}\\
            = &\widehat C \sum_{j_1+ \ldots +j_{K-1}=l} \binom{l}{j_1, \ldots, j_{K-1}} p_l^{(3n)} \Biggl( \Biggl\{ \Biggl( 1+ 
            \underbrace{\frac {3n a_0 - \sum_{j=1}^{3n} c_j(0)} {\frac {r} {2\pi}}}_{=0}
            \Biggr)^{-\abs{\alpha} - 2} \frac {(\abs{\alpha} +1)!} {\left(\frac {r} {2\pi}\right)^{\abs{\alpha}+1}} \Biggr\}_{\abs{\alpha} \le j_1},\\
            & \phantom{\widehat C \sum_{j_1+ \ldots +j_{K-1}=l} \binom{l}{j_1, \ldots, j_{K-1}} p_l^{(3n)} \Biggl(} \left\{ c_j^{(k)}(0) \right\}_{\substack{k=1,\ldots,j_1\\j=1,\ldots,3n}} \Biggr)
            \cdot \prod_{k=2}^{K-1} c_i^{(j_k)}(0) + \mu c_i^{(l)}(0)\\
            = & \widehat C \Phi\left(l,n,\frac {r} {2\pi},K;c_1(0),\ldots,c_1^{(l)}(0)\right) + \mu c_1^{(l)}(0)
            =\widehat C \Phi\left(l,n,\frac {r} {2\pi},K;\tilde a_0,\ldots, \tilde a_l\right) + \mu \tilde a_l.
        \end{split}
    \]
    Note that this corresponds to the inner $\Phi$ terms in \eqref{eq:recursiveEstimateA}.
    
    For $l=1$ this is a polynomial in $\tilde a_0,\tilde a_1$ with positive coefficients, so as long as both $\tilde a_0 = a_0$ and $\tilde a_1 = a_1$ are positive, so is each component of $DG((a_0,\ldots,a_0)^\transpose)(a_1,\ldots,a_1)^\transpose$ and consequently $F_1$ applied to that.
    Since $\gamma$ is not constant, neither $\gamma$ nor $\gamma'$ are $0 \in H^{\mf}(\R/\Z,\R^n)$ and so $a_0$ and $a_1$ have to be positive, making $\overline C$ well-defined.
    
    Analogously to \eqref{eq:derviativeGc}, taking advantage of the identity for all components in $G$,
    \[
        \label{eq:derivativeRHS}
        \begin{split}
            &\tilde a_{l+3}
            = c_1^{(l+3)}(0)
            = \overline C \restrict{\frac {\d^{l+1}} {\d t^{l+1}} F_1\left(\frac {\d}{\d t} G(c(t))\right)}{t=0} + \overline C\mu c^{(l+2)}_1(0)\\
            =& \overline C C \sum_{j_1 + \ldots + j_{K-1} = l+1} \binom{l+1}{j_1, \ldots, j_{K-1}}\\
            &\quad p_l^{(3n)} \Biggl( \Biggl\{ \Biggl( 1+ 
            \underbrace{\frac {3n DG_1(c(0))\dot c(0) - \sum_{j=1}^{3n} \restrict{\frac {\d} {\d t} G_j(c(t))}{t=0}} {r}}_{=0}
            \Biggr)^{-\abs{\alpha} - 2} \frac {(\abs{\alpha} +1)!} {r^{\abs{\alpha}+1}} \Biggr\}_{\abs{\alpha} \le j_1},\\
            &\phantom{\quad p_l^{(3n)} \Biggl(}\left\{ \restrict{\frac{\d^{k+1}} {\d t^{k+1}} G_j(c(t))}{t=0} \right\}_{\substack{k=1,\ldots,j_1\\j=1,\ldots,3n}} \Biggr) \cdot \prod_{k=2}^{K-1} \restrict{\frac {\d^{j_k+1}}{\d t^{j_k+1}} G_1(c(t))}{t=0} + \overline{C} \mu \tilde a_{l+2}\\
            =&\overline{C} C \Phi\left(l+1,n,r,K; \restrict{\frac {\d}{\d t} G_1(c(t))}{t=0},\ldots,\restrict{\frac {\d^{l+2}}{\d t^{l+2}} G_1(c(t))}{t=0}\right)   + \overline{C} \mu \tilde a_{l+2}\\
            \ge& C \Phi\left(l+1,n,r,K; \restrict{\frac {\d}{\d t} G_1(c(t))}{t=0},\ldots,\restrict{\frac {\d^{l+2}}{\d t^{l+2}} G_1(c(t))}{t=0}\right)  + \mu \tilde a_{l+2},
        \end{split}
    \]
    hence we conclude together with \eqref{eq:derviativeGc} that
    \[
    \numberthis
    \label{eq:recursiveequation}
    \begin{split}
    & \tilde a_{l+3} \geq  C \Phi\Bigl( l+1, n, r,K;\widehat{C}\Phi\left[1,n,\frac {r} {2\pi},K; \tilde a_0, \tilde a_1\right] + \mu \tilde  a_1, \ldots,\\
    &\adjustedalignment{C \Phi\Bigl( l+1, n, r,K;\widehat{C}}{\widehat{C}}\Phi\left[l+2,n,\frac {r} {2\pi},K; \tilde a_0, \ldots, \tilde a_{l+2}\right] + \mu \tilde a_{l+2}\Bigr) + \mu \tilde a_{l+2}.
    \end{split}
    \]
    Using the fact that $\Phi$ is nondecreasing in $x_0,\ldots,x_l$ (see \eqref{eq:definitionPhi})
    , we obtain
    by comparing \eqref{eq:recursiveEstimateA} and  \eqref{eq:recursiveequation} inductively with initial values \eqref {eq:tildeAFirstThreeTerms}, that
    \[  
            \tilde a_{l} \geq a_{l}
    \]
    for all $l \in \N_0$.
    By the analyticity of $c_1$ in zero and \thref{cor:TayloranalyticSob},
    we have that there exist positive constants $C_\gamma$ and $r_\gamma$ such that 
    \[
        \norm[H^\mf]{ \gamma^{(l)}} = a_l \le \tilde a_l = c_1^{(l)} (0) \le C_\gamma \frac {l!} {r_\gamma^l}
    \]
    for all $l\in\N_0$ and thus, $\gamma$ is analytic.
\end{proof}
\label{rem:smallerRegularityGain}
Note that the method presented here ought to work with even smaller differences between the orders of differentiability of the main and the remainder terms. If the regularity gain in one step is $\frac 1 m$ we would expect an analogue of \eqref{eq:recursiveEstimateA} to hold with roughly $m$ layers of nested $\Phi$-terms attainable via an analogue of \thref{lemma:iteratedEstimateMainTerm}.
This approach would possibly lead to an $m$-th order ODE in place of \eqref{eq:majorantODE}.

\section{A few simple consequences concerning critical knots}
\label{sec:Consequences}

The main result of the paper states that critical points of the generalized integral Menger curvature $\Mpz$ are not only smooth, which was known before, cf. \cite[Theorem\,4]{BlattReiter:2015:Menger}, but also analytic. Recalling and employing special properties for analytic functions, we hence decude a few simple corollaries for critical knots of $\Mpz$ and some other knot energies.

The following statements hold for any functional $\EL: C^0(\R/\Z,\R^3) \rightarrow (0,\infty]$ whose (arc-length parametrized) critical points with respect to fixed length are analytic. As a consequence, the results also apply to the generalized integral Menger curvature $\Mpz$ and some of O'Hara's knot energies, cf. \cite{BlattVorderobermeier:2019:CriticalMoebius,Vorderobermeier:2019:CriticalOHara}.

Central to these consequences is a real version of the identity theorem. It is a well-known result for analytic functions which easily carries over to the periodic case:
\begin{proposition}
\label{propositionRealIdentityTheorem}
    If two analytic functions on $\R/\Z$ are identical at infinitely many distinct points, they are identical everywhere.
\end{proposition}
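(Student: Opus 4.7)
The plan is to reduce the statement to the classical identity theorem for real analytic functions on a connected set, using compactness of the circle to upgrade ``infinitely many points'' to ``accumulation point'' and connectedness to propagate local agreement to global agreement.

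First I would pass to the difference $h := f - g$, which is analytic on $\R/\Z$ and vanishes on the given infinite set $S$. Since $\R/\Z$ is compact, by the Bolzano-Weierstrass theorem the infinite set $S$ has an accumulation point $x_0 \in \R/\Z$, at which $h$ also vanishes by continuity. Thus the proof reduces to the following assertion: if an analytic function $h$ on $\R/\Z$ has a zero with a sequence of other zeros converging to it, then $h \equiv 0$.

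To establish this, I would introduce the set
\[
    A := \set*{x \in \R/\Z}[h^{(k)}(x) = 0 \text{ for all } k \in \N_0].
\]
The set $A$ is closed by continuity of each derivative $h^{(k)}$. It is also open: for $x \in A$, the local power series representation of $h$ (given by analyticity) has all coefficients equal to zero, so $h$ vanishes identically on a neighborhood of $x$, whence every point of that neighborhood also lies in $A$. Finally, $A$ contains $x_0$: if $h^{(k)}(x_0) \ne 0$ for some smallest $k$, then locally around $x_0$ we would have $h(x) = (x - x_0)^k g(x)$ with $g$ analytic and $g(x_0) \ne 0$, contradicting the existence of a sequence of zeros of $h$ accumulating at $x_0$ (passing, if needed, to a small interval lifted to $\R$). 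Hence $A$ is a nonempty clopen subset of the connected space $\R/\Z$, so $A = \R/\Z$, which gives $h \equiv 0$ and therefore $f \equiv g$.

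There is no real obstacle; the only point requiring a moment of care is the meaning of ``analytic on $\R/\Z$''. I would interpret this as being analytic on $\R$ and $1$-periodic, or equivalently locally representable by a convergent power series in a chart around each point. Both interpretations make the local vanishing argument and the Bolzano-Weierstrass step work verbatim, so the proof carries through.
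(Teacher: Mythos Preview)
Your argument is correct. Both you and the paper use the same core idea: compactness forces the infinite agreement set to accumulate somewhere, and then the identity theorem for real analytic functions finishes the job. The only methodological difference is that the paper lifts the two periodic functions to the open interval $(-1,2)\subseteq\R$, observes that the agreement points lie in the compact subinterval $[0,1]$, and then invokes the standard identity theorem for real analytic functions on an interval (specifically \cite[Corollary~1.2.7]{KrantzParks:2002:Aprimerofrealanalyticfunctions}). You instead work intrinsically on $\R/\Z$ and reprove that identity theorem via the clopen argument on the set where all derivatives vanish. Your version is self-contained; the paper's is shorter by outsourcing the analytic continuation step to a reference.
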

\begin{proof}
    Let $\gamma,\eta : \R/\Z \to \R^3$ be analytic and without loss of generality $t_1<t_2< \ldots \in [0,1)$ such that $\gamma(t_i)=\eta(t_i)$ for all $i \in \N$.
    Define $g,h: (-1,2) \to \R^3$ as the restrictions of $\gamma$ and $\eta$ to $(-1,2)$ which are clearly also analytic.
    As $t_i \in [0,1]$ for all $i \in \N$, we have a subsequence converging to $t_0 \in [0,1]\subseteq(-1,2)$, so by \cite[Corollary~1.2.7]{KrantzParks:2002:Aprimerofrealanalyticfunctions}, $g=h$ and thus $\gamma = \eta$.
\end{proof}

\begin{lemma}
    \label{lemma:zerosAnalyticFunctions}
    Let $\gamma \colon \R/\Z \to \R^3$ and $F: \R^3 \to \R$ both be analytic. Then, $F\circ \gamma$  either has finitely many roots or is uniformly zero.
\end{lemma}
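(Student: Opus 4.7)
The proof is essentially a direct application of \thref{propositionRealIdentityTheorem} once we observe that $F \circ \gamma$ is itself an analytic function on $\R/\Z$. The plan is as follows.

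First, I would note that the composition of real-analytic maps is real-analytic: locally, we can express $\gamma$ by a convergent power series around each $t_0 \in \R/\Z$, and $F$ by a convergent power series around $\gamma(t_0) \in \R^3$; substituting the former into the latter and rearranging (which is justified inside the common domain of convergence) yields a convergent power series for $F \circ \gamma$ around $t_0$. Hence $F\circ \gamma : \R/\Z \to \R$ is analytic.

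Next, I would argue by contraposition: suppose $F \circ \gamma$ has infinitely many zeros. The zero function $0: \R/\Z \to \R$ is of course analytic. Both $F\circ \gamma$ and $0$ agree at these infinitely many distinct points, so \thref{propositionRealIdentityTheorem} implies $F \circ \gamma \equiv 0$ on $\R/\Z$. Contrapositively, if $F\circ \gamma$ is not uniformly zero, it can have only finitely many roots.

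There is essentially no obstacle here; the only thing that requires any care is the assertion that the composition of two real-analytic functions is real-analytic, and this is entirely standard (see e.g.\ \cite[Proposition~2.2.8]{KrantzParks:2002:Aprimerofrealanalyticfunctions}). Everything else is immediate from the identity theorem already established.
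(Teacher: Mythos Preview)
Your proposal is correct and follows essentially the same approach as the paper: both observe that $F\circ\gamma$ is analytic as a composition of analytic functions (citing the same reference, \cite[Proposition~2.2.8]{KrantzParks:2002:Aprimerofrealanalyticfunctions}) and then apply \thref{propositionRealIdentityTheorem} with the zero function to conclude.
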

\begin{proof}
    $F \circ \gamma \colon \R/\Z \to \R$ is analytic as a composition of analytic functions (see e.g. \cite[Proposition~2.2.8]{KrantzParks:2002:Aprimerofrealanalyticfunctions}).
    Using the zero function in \thref{propositionRealIdentityTheorem}, $F\circ \gamma$ either is uniformly zero or has only finitely many roots.
\end{proof}

\begin{proposition}
	\label{corollary:UnknotEllipsoidHyperplane}
	Let $\gamma \in C^1(\R/\Z,\R^3)$ be a simple curve parametrized by arc-length which is a critical point of $\EL$.
	Then, it is the unknot or it has only finitely many intersections with any hyperplane or sphere	in $\R^3$.
	In particular, $\gamma$ cannot have straight segments.
\end{proposition}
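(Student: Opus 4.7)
The plan is to encode each hyperplane or sphere in $\R^3$ as the zero set of a real-analytic function $F\colon\R^3\to\R$ and apply \thref{lemma:zerosAnalyticFunctions} to $F\circ\gamma$. Concretely, a hyperplane admits the affine defining function $F(x)=\langle a,x\rangle-b$ with $a\in\R^3\setminus\{0\}$, and a sphere of center $c$ and radius $r$ admits $F(x)=|x-c|^2-r^2$; both are polynomial, hence analytic, while $\gamma$ is analytic by the standing hypothesis on $\EL$ (in the main case of interest this analyticity comes from \thref{thm:main}). The lemma then yields the dichotomy that $F\circ\gamma$ either has only finitely many zeros or vanishes identically, i.e., either $\gamma$ meets the hyperplane or sphere in only finitely many points, or $\gamma$ is entirely contained in it.

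In the containment case I would argue that $\gamma$ must be the unknot. If $\gamma\subset P$ for a plane $P\subset\R^3$, then $\gamma$ is a Jordan curve in $P$, and by the Jordan--Sch\"onflies theorem it bounds an embedded topological disk in $P\subset\R^3$, so $\gamma$ is unknotted. If $\gamma\subset S$ for a round sphere $S\cong S^2$, the Jordan curve theorem on $S^2$ likewise produces a spanning disk on $S$, again realizing $\gamma$ as the boundary of an embedded disk in $\R^3$, so $\gamma$ is the unknot. This finishes the main dichotomy.

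For the ``in particular'' assertion I would argue by contradiction. Suppose there is an open arc $I\subset\R/\Z$ on which $\gamma$ parametrizes a portion of an affine line $\ell\subset\R^3$. For every hyperplane $H\supset\ell$ with affine defining function $F$, the analytic composition $F\circ\gamma$ vanishes on all of $I$; since $I$ contains an accumulation point, \thref{propositionRealIdentityTheorem} forces $F\circ\gamma\equiv 0$ on $\R/\Z$, so $\gamma\subset H$. Intersecting over all hyperplanes through $\ell$ yields $\gamma\subset\ell$, which is impossible for a simple closed curve in $\R^3$ (the image would have to be a compact connected subset of a line, hence a point or a segment, neither of which is a simple closed curve). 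Hence $\gamma$ admits no straight segment.

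No step looks like a genuine obstacle: the substantive work has been done upstream to secure analyticity of $\gamma$, after which \thref{lemma:zerosAnalyticFunctions}, \thref{propositionRealIdentityTheorem}, and the elementary fact that planar or spherical Jordan curves bound disks are sufficient. The only point to be careful about is the spherical case, where one should invoke the Jordan theorem on $S^2$ rather than directly in $\R^3$ to produce the spanning disk.
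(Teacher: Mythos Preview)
Your proposal is correct. The hyperplane dichotomy and the no-straight-segments argument are essentially identical to the paper's: the same affine defining functions, the same appeal to \thref{lemma:zerosAnalyticFunctions} and \thref{propositionRealIdentityTheorem}, and the same ``intersection of two hyperplanes through the line'' contradiction.

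The genuine difference is in the spherical case. The paper does \emph{not} invoke the disk characterization of the unknot; instead it constructs an explicit $C^1$-isotopy $h_t=(1-t)\gamma+t\,\tilde\gamma$ between $\gamma$ and its stereographic projection $\tilde\gamma$ onto a plane, checks that each $h_t$ is an embedding by exhibiting a $C^1$ left-inverse, and then upgrades this to an ambient isotopy via the isotopy extension theorem, thereby reducing to the planar case already handled. Your route---Jordan curve theorem on $S^2$ produces an embedded spanning disk on $S$, hence $\gamma$ bounds a disk in $\R^3$ and is unknotted---is shorter and more elementary, but it tacitly uses the equivalence between ``bounds an embedded disk'' and ``is ambient isotopic to the round circle,'' whereas the paper stays entirely inside the ambient-isotopy framework it set up at the outset. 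Both are valid; yours is cleaner, the paper's is more self-contained relative to its chosen definition of knot type.
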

\begin{proof}
	First of all, let us consider any hyperplane with normal vector $n\in\R^3$ and position vector $x_0\in\R^3$, described by the equation $F(x)=0$, where  $F:\R^3\rightarrow\R$, $F(x)= \langle n,x-x_0\rangle$. Furthermore, let $\gamma:\R/\Z\rightarrow \R^3$ be a critical point of $\EL$ that is simple and parametrized by arc-length. We then observe that both $F$ and $\gamma$ are analytic, which is why we can apply \thref{lemma:zerosAnalyticFunctions} and observe that the curve $\gamma$ either intersects the hyperplane finitely many times or is entirely contained therein. In the latter case we obtain the unknot, which is the only simple planar knot, cf. \cite[p.~5]{CrowellFox:1977:Introductiontoknottheory} or \cite[Chapter~VI,~18.]{Newman:1951:ElementsOfTopology}.
    If $\gamma$ contains a straight segment, this segment is in particular contained in two distinct hyperplanes.
    Thus, the whole curve has to be contained in both hyperplanes and therefore has to be a straight line which is impossible for a closed $C^1$-curve of finite length.
	
	We then turn our focus to any sphere with center $x_0\in\R^3$ and radius $r>0$, described by the equation $F(x) =0$, where $F:\R^3 \rightarrow \R$, $F(x) = |x-x_0|^2 -r^2$.
    Therefore, similarly to the hyperplane, \thref{lemma:zerosAnalyticFunctions} gives us that any critical analytic knot $\gamma$  of $\EL$ either intersects the sphere finitely many times or is entirely contained therein. 

	In the second case, we would intuitively expect to obtain the unknot by applying a stereographic projection from the sphere to the plane and then argue as above. To make this rigorous, we need to construct an ambient isotopy, cf. \cite[Chapter~8.1]{Hirsch:1976:DifferentialTopology}, to avoid any kind of self-intersection or pull-tight phenomena during the projection process. 
	To simplify the upcoming computations, we assume that the sphere is the unit sphere centered in the origin and that the knot $\gamma$ on the sphere does not intersect the north pole of the sphere. We omit the proof that translations and dilations of the sphere do not change the knot type of the spherical knot and remark that stereographic projections can be done from any other point on the sphere as well. The existence of such point on the sphere, which the knot $\gamma$ does not intersect, is ensured due to the fact that $\gamma$ is rectifiable. Furthermore, note that since the curve $\gamma$ is continuous, there exists an $r>0$ such that $\gamma$ does not intersect a ball around the north pole with radius $r$.
	
	We begin with constructing a $C^1$-isotopy from $\gamma$ to the stereographic projection of $\gamma$ from the north pole onto the plane $z=0$ denoted by $\tilde \gamma$. Note that $\tilde \gamma$ is indeed a $C^1$-embedding: As $\gamma\in C^\omega(\R/\Z,\R^3)$ is a diffeomorphism onto its image and the stereographic projection $P:\mathbb S^2\setminus{(0,0,1)} \rightarrow \R^2$ given by $P(x,y,z)= (\frac{x}{1-z},\frac{y}{1-z})$ with inverse function $P^{-1}(u,v) = \tfrac{1}{u^2+v^2 +1}(2u,2v,u^2+v^2 -1)$ is a diffeomorphism from the sphere without the north pole to the plane $z=0$, also $\tilde \gamma \in C^1(\R/\Z,\R^3)$ is diffeomorphic onto its image. We then define the map $h:\R/\Z \times [0,1]\rightarrow \R^3$ by
	\[
		h_t(w):=(1-t)\gamma(w) + t \tilde \gamma (w),
	\]
    which is continuously differentiable in time and space.
    
    To see that $h$ is a $C^1$-isotopy (cf.\ \cite[Section~8.1]{Hirsch:1976:DifferentialTopology}), we construct a continuously differentiable left-inverse of each $h_t$ and thus show that $h_t$ is a $C^1$-embedding.
    First note that by the nature of stereographic projection, $h_t(w)$ lies on the unique straight line passing through both the north pole $N$ and $\gamma(w)$.
    Since $\gamma$ lies on the sphere and does not contain $N$, this line does not contain any other points of $\gamma$ or $h(\tilde w,[0,1])$ for any $\tilde w \ne w$.
    This means that for any $x \in h_t(\R/\Z)$, we may obtain $w=h_t^{-1}(x)$ as $\gamma^{-1}(y)$, where $y=\lambda N + (1-\lambda)x$ with $\abs{y}=1$.
    Solving for $\lambda \in \R$ yields either $\lambda=0$, which we may safely ignore since it corresponds to $y$ being the north pole, or $\lambda=\frac {\abs{x}^2-1} {\abs{x-N}^2}$.
    As $x$ has to stay away from the north pole, we found a $C^1$ right-inverse of $h_t$ and so, $h$ is a $C^1$-embedding.
    
    Obviously, also 
	\[
		h(\cdot,0) = \gamma \quad \textnormal{and} \quad h(\cdot,1) = \tilde \gamma
	\]
	hold. Now by \cite[Section~8.1,~Exercise~4]{Hirsch:1976:DifferentialTopology} or \cite[Theorem~1.2]{Blatt:2009:NoteOnIsotopies}, we can extend the $C^1$-isotopy between $\gamma$ and $\tilde \gamma$ to an ambient isotopy of $\R^3$ and therefore obtain that the knot type is preserved throughout the stereographic projection.
\end{proof}

As a direct consequence of \thref{lemma:zerosAnalyticFunctions} and  \thref{corollary:UnknotEllipsoidHyperplane}, we obtain the following. 
\begin{corollary}
	Let $\gamma \in C^1(\R/\Z,\R^3)$ be a simple curve parametrized by arc-length which is a critical point of $\EL$ and let $F:\R^3 \rightarrow \R$ be an analytic map that describes an implicit surface $F(x,y,z)=0$.
	Then, the curve $\gamma$ either is entirely contained in the implicit surface $F(x,y,z)=0$ or has only finitely many intersections with it. In particular, if the given surface is ambient isotopic to a sphere or a hyperplane, the curve $\gamma$ is the unknot or has finitely many intersections with the surface.
\end{corollary}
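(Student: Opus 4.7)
The plan is to reduce the claim to \thref{lemma:zerosAnalyticFunctions} together with the unknotting argument contained in the proof of \thref{corollary:UnknotEllipsoidHyperplane}, so essentially no new work is required.

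First I would note that since $\gamma$ is a critical point of $\EL$, it is analytic by the standing hypothesis on $\EL$. With $F$ analytic, the composition $F\circ\gamma:\R/\Z\to\R$ is then analytic, so \thref{lemma:zerosAnalyticFunctions} applies verbatim. This yields the dichotomy: either $F\circ\gamma\equiv 0$, in which case $\gamma(\R/\Z)\subseteq\{F=0\}$, or $F\circ\gamma$ has only finitely many zeros, in which case $\gamma$ meets the implicit surface in only finitely many points. This already proves the first assertion.

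For the ``in particular'' clause, assume $\Sigma:=\{F(x,y,z)=0\}$ is ambient isotopic to a sphere or a hyperplane via some ambient isotopy $H:\R^3\times[0,1]\to\R^3$. If $\gamma$ is not contained in $\Sigma$, the first assertion already delivers only finitely many intersections. Otherwise $\gamma(\R/\Z)\subseteq\Sigma$, and $\tilde\gamma:=H(\cdot,1)\circ\gamma$ is a simple closed $C^1$-curve lying entirely inside a sphere or hyperplane with the same knot type as $\gamma$, since ambient isotopies preserve the knot type. The proof of \thref{corollary:UnknotEllipsoidHyperplane} establishes, via stereographic projection in the sphere case and via the fact that the only simple planar knot is the unknot in the hyperplane case, that any such simple closed $C^1$-curve lying on a sphere or hyperplane is the unknot. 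Hence $\tilde\gamma$, and therefore $\gamma$, is the unknot.

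The only point where care is needed is to observe that the unknotting portion of \thref{corollary:UnknotEllipsoidHyperplane} uses only simplicity, closedness and $C^1$-regularity of the spherical (resp.\ planar) curve and no property specific to critical points of $\EL$; it can therefore be reused after transporting $\gamma$ to the model surface via $H(\cdot,1)$. A brief rereading of that proof confirms this, which is why the present corollary follows so cheaply from the preceding results.
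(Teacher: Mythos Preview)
Your proof is correct and matches the paper's intended approach exactly: the paper gives no separate proof for this corollary, stating only that it is a direct consequence of \thref{lemma:zerosAnalyticFunctions} and \thref{corollary:UnknotEllipsoidHyperplane}, which are precisely the two ingredients you invoke. Your argument fleshes out the details faithfully, including the transport of $\gamma$ via the ambient isotopy to reduce to the sphere/hyperplane case already handled; the only implicit assumption is that the ambient isotopy is taken in the smooth (or at least $C^1$) sense so that $\tilde\gamma$ retains enough regularity for the unknotting argument, which is consistent with the paper's conventions.
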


\begin{corollary}
    Let $\gamma,\eta$ be critical points of $\EL$ which are parametrized by arc-length.
    As soon as
    \[
        \gamma(t)=A\eta(t) + x_0
    \]
    for fixed $A\in \R^{3\times 3}$, $x_0 \in \R^3$, and infinitely many distinct $t \in [0,1)$, we have $\gamma = A \eta + x_0$.
\end{corollary}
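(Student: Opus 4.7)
The proof is essentially a direct application of \thref{propositionRealIdentityTheorem}, and the plan has two clean steps. First, I would argue that both sides of the putative equality $\gamma = A\eta + x_0$ are analytic maps $\R/\Z \to \R^3$. By the hypothesis on $\EL$ (whose arc-length parametrized critical points are assumed to be analytic, which by \thref{thm:main} is indeed satisfied in particular by $\Mpz$), both $\gamma$ and $\eta$ are analytic. The map $A\eta + x_0$ is then analytic as well, since each of its components is a fixed $\R$-linear combination of the analytic components of $\eta$ plus a real constant, hence analytic.

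Next, I would invoke \thref{propositionRealIdentityTheorem} directly. By hypothesis, the analytic maps $\gamma$ and $A\eta + x_0$ agree at infinitely many distinct points in $[0,1) \cong \R/\Z$, so they must coincide on all of $\R/\Z$. Equivalently, one may apply the identity theorem componentwise to each of the three analytic functions $\gamma_i - (A\eta + x_0)_i : \R/\Z \to \R$, each of which vanishes on an infinite set and therefore vanishes identically.

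There is no real obstacle here: the entire content of the corollary is packaged into the fact that critical points are analytic (\thref{thm:main}) and the identity theorem for analytic functions on $\R/\Z$ (\thref{propositionRealIdentityTheorem}). The only thing one should explicitly note is that affine postcomposition by $(A,x_0)$ preserves analyticity, which is immediate from the definition via convergent power series.
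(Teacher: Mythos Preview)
Your proposal is correct and follows essentially the same approach as the paper: note that $\gamma$ and $\eta$ are analytic by assumption, observe that $A\eta + x_0$ is therefore analytic, and apply \thref{propositionRealIdentityTheorem}. The paper's proof is a terse two-line version of exactly this argument.
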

\begin{proof}
    Note that by assumption, $\gamma$ and $\eta$ are analytic.
    Then, the statement follows from the fact that $A\eta + x_0$ is analytic as well and applying \thref{propositionRealIdentityTheorem}.
\end{proof}
\section*{Acknowledgements}

Daniel Steenebrügge acknowledges support by the Deutsche Forschungsgemeinschaft (DFG, German Research Foundation) – project number 320021702/GRK2326 –  \emph{Energy, Entropy, and Dissipative Dynamics (EDDy)} and Nicole Vorderobermeier by the Austrian Science Fund (FWF), Grant P29487. Funding from RWTH Aachen University for Nicole Vorderobermeier's research visit is also gratefully acknowledged.

In addition, the authors would like to sincerely thank Simon Blatt for his constant support and indispensable discussions throughout the development of this research paper, Heiko von der Mosel for his generous support and hospitality as well as fruitful discussions, from which Section \thref{sec:Consequences} emerged, and Philipp Reiter for having a sympathetic ear for the authors and giving important hints to deal with some filthy estimates in Section \thref{subsec:FractionalLeibnizRule}.

    \appendix

\section{Properties of analytic functions}

In this section we aim to recall some statements related to analytic functions, which are used in the paper. For further information on analytic functions we refer the reader for instance to \cite{KrantzParks:2002:Aprimerofrealanalyticfunctions}. 
First we note that analytic functions can be characterized as follows, cf.\ for example \cite[Corollary~3.2]{Vorderobermeier:2019:CriticalOHara}.

\begin{proposition} \label{cor:TayloranalyticSob}
	Let $f\in C^\infty (\mathbb R / \mathbb Z,\R^n)$ and $s>0$. Then the function $f$ is analytic on $\R/\Z$ if  there are positive constants $r$ and $C$ such that
	\begin{align*}
	\|f^{(k)}\|_{H^{1+s}} \leq C \frac{k!}{r^{k}}
	\end{align*}
	holds for all integers $k \geq 0$.
\end{proposition}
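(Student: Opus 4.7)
The plan is to reduce the statement to the classical Cauchy-type characterization of real-analyticity via uniform bounds on derivatives, and then to exploit the Sobolev embedding to convert the given $H^{1+s}$ bound into an $L^\infty$ bound.

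First, I would invoke the Sobolev embedding theorem on $\R/\Z$: since $1+s>\tfrac12$, the space $H^{1+s}(\R/\Z,\R^n)$ embeds continuously into $C^0(\R/\Z,\R^n)$ (this is exactly the one-dimensional embedding result already cited in the paper as \cite[Chapter~4,~Proposition~3.3]{Taylor:1996:PartialDifferentialEquationsa}). Hence there is a constant $C_{\mathrm{emb}}=C_{\mathrm{emb}}(s)>0$ such that
\[
    \|f^{(k)}\|_{L^\infty} \le C_{\mathrm{emb}} \|f^{(k)}\|_{H^{1+s}} \le C_{\mathrm{emb}} C\,\frac{k!}{r^k}
\]
for every $k\in\N_0$. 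Setting $\widetilde C := C_{\mathrm{emb}} C$, this is precisely the Cauchy-type derivative bound.

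Next, I would fix an arbitrary $x_0\in\R/\Z$ and consider the Taylor expansion of $f$ around $x_0$. Using the Lagrange remainder (valid since $f\in C^\infty$), for every $x\in\R/\Z$ and $N\in\N_0$,
\[
    \left| f(x) - \sum_{k=0}^{N}\frac{f^{(k)}(x_0)}{k!}(x-x_0)^k \right|
    \le \frac{\|f^{(N+1)}\|_{L^\infty}}{(N+1)!}\,|x-x_0|^{N+1}
    \le \widetilde C\,\left(\frac{|x-x_0|}{r}\right)^{N+1}.
\]
For $|x-x_0|<r$ the right-hand side tends to $0$ as $N\to\infty$, so the Taylor series converges to $f(x)$ on the open interval $(x_0-r,x_0+r)$. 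Since $x_0$ was arbitrary and the convergence radius $r$ is uniform, $f$ is real-analytic on $\R/\Z$.

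I do not expect any genuine obstacle here: the only subtle point is making sure the Sobolev embedding is applicable, which is guaranteed by $1+s>\tfrac12$ for any $s>0$, and that the radius of convergence $r$ in the derivative bound coincides (up to the universal factor $C_{\mathrm{emb}}$) with the radius of convergence of the Taylor series, which is standard. The argument is dimension-independent and uses nothing beyond Sobolev embedding, the Taylor remainder formula, and the definition of analyticity.
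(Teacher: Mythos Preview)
Your proof is correct and follows essentially the same strategy as the paper: pass from the $H^{1+s}$ bound to an $L^\infty$ bound via Sobolev embedding, then invoke the classical Cauchy-type characterization of real-analyticity. The only cosmetic differences are that the paper routes the embedding through $H^{1+s}\subseteq H^1 \cong W^{1,2}$ followed by Morrey's inequality (rather than citing the direct embedding $H^{1+s}\hookrightarrow C^0$), and it cites \cite[Proposition~1.2.12]{KrantzParks:2002:Aprimerofrealanalyticfunctions} for the analyticity criterion instead of writing out the Lagrange remainder argument explicitly.
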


This can be seen by using \cite[Proposition~1.2.12]{KrantzParks:2002:Aprimerofrealanalyticfunctions} together with a standard covering argument, Morrey's inequality (see e.g. \cite[Paragraph~4.16]{AdamsFournier:2003:SobolevSpaces}), the equivalence of the $W^{1,2}$- and the $H^1$-norm on $\mathbb R / \mathbb Z$ (cf.~\cite[Lem.~1.2]{Reiter:2012:RepulsiveKnotEnergies} or \cite[7.62]{AdamsFournier:2003:SobolevSpaces}) as well as the embedding $H^s\subseteq H^t$ for any $t < s$ (cf.~\cite[Chapter~4,~Proposition~3.4]{Taylor:1996:PartialDifferentialEquationsa}).

In addition, we need a special case of the Cauchy-Kovalevsky theorem, which can be derived for example from \cite[(1.25)]{Folland:1995}.

\begin{theorem}[Cauchy-Kovalevsky -- ODE case]\label{thm:CK}
	Suppose the function $G: \R^{2n} \rightarrow \R^n$ is real analytic around $(c_0,c_1)$ for some $c_0,c_1 \in\R^n$, and the function $f \in C^\infty((-\varepsilon, \varepsilon),\R^n)$ of the form $f(x)= (f_1(x),\ldots,f_n(x))$, for any $ \varepsilon > 0$, is a solution of the initial value problem 
	\begin{align*}
	f''(x) & = G(f(x), f'(x)) \textnormal{ for } x \in (-\varepsilon,\varepsilon) \textnormal{ with}\\
	f(0)& =c_0, \textnormal{ and}\\
	f'(0) & = c_1. 
	\end{align*}
	Then the function $f$ is real analytic around $0$.
\end{theorem}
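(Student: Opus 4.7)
The plan is to apply the classical method of majorants, paralleling but substantially simplifying the strategy already used in the proof of \thref{thm:main}. First, I would reduce the second-order problem to a first-order system by setting $u := (f, f') \in C^\infty((-\varepsilon,\varepsilon), \R^{2n})$. The IVP then reads $u'(x) = H(u(x))$, $u(0) = (c_0, c_1)$, where $H : \R^{2n} \to \R^{2n}$, $H(y_1, y_2) := (y_2, G(y_1, y_2))$, is real analytic on a neighborhood of $(c_0, c_1)$. Since analyticity of each component of $u$ obviously implies analyticity of $f$, by the standard Taylor-series criterion it suffices to exhibit constants $M_0, r_0 > 0$ with $\abs{u^{(k)}(0)}_\infty \le M_0 \, k!/r_0^k$ for all $k \in \N$.

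Next, after translating so that $u(0) = 0$, smoothness of $u$ permits differentiating $u' = H \circ u$ at $0$ arbitrarily often. Faà di Bruno's formula (cf.\ \thref{lem:FaadiBruno}) expresses
\[
    u^{(k+1)}(0) = P_k\!\left(\bigl\{\partial^\alpha H(0)\bigr\}_{\abs{\alpha}\le k},\, u(0), u'(0), \ldots, u^{(k)}(0)\right),
\]
where $P_k$ is a universal polynomial in its entries with nonnegative integer coefficients depending only on $k$ and $n$, entirely analogous to the polynomial $p_k^{(3n)}$ appearing in \eqref{eq:definitionPhi}. Analyticity of $H$ at $0$ supplies constants $M, \rho > 0$ with $\abs{\partial^\alpha H(0)}_\infty \le M \alpha!/\rho^{\abs{\alpha}}$ for every multi-index~$\alpha$.

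Third, I would introduce the scalar majorant IVP
\[
    \zeta'(x) = \frac{M}{1 - 2n \zeta(x)/\rho}, \qquad \zeta(0) = 0,
\]
which separates to yield the explicit real-analytic solution $\zeta(x) = \frac{\rho}{2n}\bigl(1 - \sqrt{1 - 4nMx/\rho}\bigr)$ on a neighborhood of $0$. The symmetric vector $(\zeta, \ldots, \zeta)^\transpose \in \R^{2n}$ solves the majorant system $z' = H^*(z)$ with $H^*_i(y) := M/(1 - (y_1 + \cdots + y_{2n})/\rho)$, whose Taylor coefficients at $0$ componentwise dominate those of~$H$. By induction on $k$, using the nonnegativity of the coefficients of $P_k$ and the componentwise dominance of the $\partial^\alpha H^*(0)$ over $\abs{\partial^\alpha H(0)}$, I would obtain $\abs{u^{(k)}(0)}_\infty \le \zeta^{(k)}(0)$ for every $k$. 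Analyticity of $\zeta$ at $0$ then delivers the required Cauchy-type bound on $u^{(k)}(0)$, and hence on $f^{(k)}(0)$.

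The main obstacle will be verifying cleanly that $P_k$ has nonnegative integer coefficients and that the derivatives of $H^*$ componentwise majorize those of $H$ in every term of the Faà di Bruno expansion; once this is in place, the dominance $\abs{u^{(k)}(0)}_\infty \le \zeta^{(k)}(0)$ propagates through the recursion by routine induction. Everything else reduces to the explicit separation-of-variables computation for $\zeta$ together with a harmless translation of the initial data, so the proof is essentially a stripped-down version of the majorant argument already carried out in full detail for \thref{thm:main}.
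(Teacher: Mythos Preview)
Your proposal is correct and follows exactly the approach the paper indicates: the paper does not give a detailed proof but simply states that the theorem ``can be proven by the \emph{method of majorants}, e.g.\ as explained in the proof of \cite[2.4.1]{KrantzParks:2002:Aprimerofrealanalyticfunctions}'', and your sketch is precisely that standard argument (reduction to a first-order system, Fa\`a di Bruno recursion, and comparison with an explicit analytic majorant).
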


The statement can be proven by the \emph{method of majorants}, e.g.\ as explained in the proof of  \cite[2.4.1]{KrantzParks:2002:Aprimerofrealanalyticfunctions}. At this point we remark that the strategy of proof for the main statement, \thref{thm:main}, is motivated from this method.

\section{Faà di Bruno's formula}

Faà di Bruno's formula generalizes the chain rule to higher derivatives. In particular, the $k$-th derivative of the composition of two functions $f, g\in C^k (\R,\R)$ can be written as
\begin{align*}
&\left(\tfrac{d}{dt}\right)^k g(f(t)) = \\
&\sum_{\substack{m_1+2m_2+ \cdots km_k=k, \\ m_1,\ldots,m_k\in\N_0}} \frac{k!}{m_1!1!^{m_1}m_2!2!^{m_2}\ldots m_k!k!^{m_k}} g^{(m_1+\cdots +m_k)}(f(t))\prod_{j=1}^{k} (f^{(j)} (t))^{m_j}.
\end{align*}
R. Mishkov generalized Faà di Bruno's formula to the multivariate case \cite{Mishkov:2000:GeneralizationFormulaFaa}.
The following lemma and notation via the universal polynomial are taken directly from \cite[Section~2.3]{BlattVorderobermeier:2019:CriticalMoebius}.
\begin{lemma}\label{lem:FaadiBruno}
	Let $U\subseteq\R$ be a neighborhood of $0$ and $f\in C^k(U, \R^n)$, $n\in\N_0$. Moreover, let $V\subseteq\R^n$ be a neighborhood of $f(0)$ such that $f(U)\subseteq V$ and $g\in C^k(V, \R)$. Then we have for any $k\in\N$ and $x\in U$ 
	\begin{align*}
	\left(\!\tfrac{d}{dx}\!\right)^k g(f(x)) \!=\! \sum_0 \sum_1 \cdots \sum_k \frac{k!}{\prod_{i=1}^k (i!)^{r_i} \prod_{i=1}^k\prod_{j=1}^nq_{ij}! }  (\partial^\alpha g) (f(x)) \prod_{i=1}^k ( f_1^{(i)}(x))^{q_{i1}} \cdots ( f_n^{(i)}(x))^{q_{in}}
	\end{align*}
	where the sums are over all non-negative integer solutions of the following equations
	\begin{align*}
	& \sum_0 :  r_1 + 2r_2 + \cdots + k r_k = k, \\
	& \sum_1 : q_{11} + q_{12} + \cdots + q_{1n} = r_1 \\
	& \ \ \vdots  \\
	& \sum_k  : q_{k1} + q_{k2} + \cdots + q_{kn} = r_k
	\end{align*}
	and $\alpha := (\alpha_1,\ldots,\alpha_n)$ with $\alpha_j := q_{1j} + q_{2j} + \cdots + q_{kj}$ for $1\leq j \leq n$. 	
\end{lemma}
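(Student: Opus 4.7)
The plan is to compute the Taylor coefficient of $h^k$ in $g(f(x+h))$ at $h=0$ in two different ways and equate them. On one hand, since $g\circ f \in C^k(U,\R)$, the coefficient of $h^k$ in the $k$-th order Taylor polynomial of $g(f(x+h))$ at $h=0$ equals $\frac{1}{k!}(g\circ f)^{(k)}(x)$; multiplying by $k!$ then recovers the left-hand side of the claimed identity. On the other hand, we will expand the same expression using Taylor's formula for $f$ componentwise together with the multivariate Taylor expansion of $g$ around $f(x)$, and match coefficients.

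Concretely, I would first write, modulo $o(h^k)$,
\[
    f_j(x+h) - f_j(x) = \sum_{i=1}^{k} \frac{h^i}{i!} f_j^{(i)}(x), \qquad j=1,\ldots,n,
\]
and
\[
    g(f(x+h)) = \sum_{|\alpha|\leq k} \frac{(\partial^\alpha g)(f(x))}{\alpha!}\,(f(x+h) - f(x))^\alpha + o(h^k).
\]
Next I would raise each component to the power $\alpha_j$ via the multinomial theorem, writing
\[
    (f_j(x+h) - f_j(x))^{\alpha_j}
    = \sum_{q_{1j}+\cdots+q_{kj}=\alpha_j} \frac{\alpha_j!}{q_{1j}!\cdots q_{kj}!} \prod_{i=1}^{k} \left(\frac{h^i}{i!}\right)^{q_{ij}} (f_j^{(i)}(x))^{q_{ij}},
\]
and multiply these expansions over $j=1,\ldots,n$. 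The total power of $h$ arising from a tuple $(q_{ij})$ is $\sum_{i,j} i\,q_{ij}$, so isolating the coefficient of $h^k$ enforces $\sum_{i,j} i\,q_{ij} = k$; introducing $r_i := \sum_{j=1}^n q_{ij}$ turns this into $\sum_i i\,r_i = k$ (the constraint $\sum_0$ in the statement), while the innermost multinomial constraints coincide exactly with $\sum_i$ for $i=1,\ldots,k$, and $\alpha_j = \sum_i q_{ij}$ matches the definition of $\alpha$.

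The final bookkeeping step, which I expect to be the only delicate part, is to verify that the resulting numerical coefficient is $\frac{k!}{\prod_i (i!)^{r_i}\,\prod_{i,j} q_{ij}!}$. Combining $k!$ (from extracting the $k$-th derivative), $\frac{1}{\alpha!} = \prod_j \frac{1}{\alpha_j!}$ (from the Taylor expansion of $g$), the multinomials $\prod_j \frac{\alpha_j!}{\prod_i q_{ij}!}$, and the factor $\prod_{i,j} (i!)^{-q_{ij}}$ coming from the $(h^i/i!)^{q_{ij}}$ terms, the $\alpha_j!$'s cancel and $\prod_{j} (i!)^{q_{ij}} = (i!)^{r_i}$ for each fixed $i$; this yields precisely the coefficient in the statement. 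Summing over all admissible $(q_{ij})$ and $r_i$, and noting that $(\partial^\alpha g)(f(x))$ depends only on $\alpha = (\alpha_1,\ldots,\alpha_n)$ with $\alpha_j = \sum_i q_{ij}$, we recover the claimed formula. Rigorous justification of the Taylor expansions at the $C^k$-level (rather than the analytic level where convergent power series would make the manipulation transparent) is what requires the most care, but is standard.
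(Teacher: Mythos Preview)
Your argument is correct. The paper itself does not prove this lemma: it merely records the formula, attributing the multivariate generalization to Mishkov and noting that the precise formulation and the universal-polynomial notation are taken verbatim from \cite[Section~2.3]{BlattVorderobermeier:2019:CriticalMoebius}. So there is nothing to compare on the paper's side.

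Your route---matching the $h^k$-coefficient in the two Taylor expansions of $g(f(x+h))$---is one of the standard proofs of Fa\`a di Bruno's formula and goes through exactly as you outline. The bookkeeping is right: the $\alpha_j!$ from the multinomial expansion cancels the $1/\alpha_j!$ from the Taylor expansion of $g$, and $\prod_{i,j}(i!)^{-q_{ij}}=\prod_i(i!)^{-r_i}$ collapses the denominators correctly; the constraint $\sum_{i,j} i\,q_{ij}=k$ is equivalent to $\sum_i i\,r_i=k$ once $r_i=\sum_j q_{ij}$, and automatically forces $|\alpha|=\sum_i r_i\le k$, so the truncation of the Taylor series for $g$ at order $k$ loses nothing. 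The only analytic point, which you flag, is that under mere $C^k$-regularity one must use Taylor's theorem with Peano remainder and check that all cross terms involving the remainders are $o(h^k)$; since $f(x+h)-f(x)=O(h)$ and each remainder is $o(h^k)$, this is immediate.
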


In the following we seldom need the precise form of Faà di Bruno's formula in the multivariate case but only the fact that there is a universal polynomial $p_k^{(n)}$ with non-negative coefficients independent of $f,g$ such that 
\begin{align}\label{universalpoly}
\left(\tfrac{d}{dx}\right)^k g(f(x)) = p_k^{(n)} (\{\partial_\alpha g\}_{|\alpha|\leq k}, \{ f^{(j)}_i\}_{i=1, \ldots, n, j=1, \ldots, k} )
\end{align}
for all $f \in C ^k (\mathbb R, \mathbb R ^n)$ and $g \in C^k(\mathbb R ^n, \mathbb R).$ Furthermore, $p_k^{(n)}$ is one-homogeneous in the first entries.

\begin{lemma}
	\label{lemma:estimateAnalyticFaaDiBruno}
	Let $k,n \in \N$, $C_x,C_y>0$, $x_j^{(i)}>0$ and 
	\begin{align}\label{eq:lemmaUpperBound}
	y_\alpha \in \left(0,C_y \frac {(\abs{\alpha} +1)!} {r^{\abs{\alpha} +1}}\right)
	\end{align}
	for all $\alpha \in \N_0^n$ with $\abs{\alpha} \le k$, $i \in \{1,\ldots,k\}$ and $j \in \{1,\ldots,n\}$.
	Then, for $C:=\max\{1,C_x\}$
	\[
	p_k^{(n)}(\{y_\alpha\}_{\abs{\alpha}\le k}, \{C_x x_j^{(i)}\}_{\substack{i = 1, \ldots, k\\j =1, \ldots, n}})
	\le C_y p_k^{(n)}\left(\left\{\frac {(\abs{\alpha} +1)!} {\left(\frac r C\right)^{\abs{\alpha} +1}}\right\}_{\abs{\alpha}\le k}, \{x_j^{(i)}\}_{\substack{i = 1, \ldots, k\\j =1, \ldots, n}}\right).
	\]
\end{lemma}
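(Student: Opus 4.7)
My plan is to exploit the explicit monomial structure of $p_k^{(n)}$ from Faà di Bruno's formula and compare the contribution of each term on both sides.

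Recall from \thref{lem:FaadiBruno} that $p_k^{(n)}$ is a finite sum of terms of the form $c_{r,q}\, y_\alpha \prod_{i=1}^{k} \prod_{j=1}^{n} (x_j^{(i)})^{q_{ij}}$ with non-negative coefficients $c_{r,q}$, where the multi-index $\alpha=(\alpha_1,\dots,\alpha_n)$ is determined by $\alpha_j = \sum_{i=1}^{k} q_{ij}$ and the constraints $\sum_i i r_i = k$, $\sum_j q_{ij} = r_i$ hold. A direct computation then gives $|\alpha| = \sum_{i,j} q_{ij} = \sum_i r_i$. First I would substitute $x_j^{(i)}$ by $C_x x_j^{(i)}$ and $y_\alpha$ by its upper bound from \eqref{eq:lemmaUpperBound}; since every coefficient $c_{r,q}$ and every $x_j^{(i)}$ is non-negative, this produces a termwise upper bound
\[
	c_{r,q}\, C_y \frac{(|\alpha|+1)!}{r^{|\alpha|+1}} C_x^{|\alpha|} \prod_{i=1}^{k}\prod_{j=1}^{n} (x_j^{(i)})^{q_{ij}}
\]
for the corresponding term on the left-hand side.

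Second, I would rewrite the analogous term on the right-hand side as
\[
	C_y \cdot c_{r,q} \frac{(|\alpha|+1)!}{(r/C)^{|\alpha|+1}} \prod_{i=1}^{k}\prod_{j=1}^{n} (x_j^{(i)})^{q_{ij}}
	= c_{r,q}\, C_y \frac{(|\alpha|+1)!}{r^{|\alpha|+1}} C^{|\alpha|+1} \prod_{i=1}^{k}\prod_{j=1}^{n} (x_j^{(i)})^{q_{ij}}.
\]
The task then reduces to showing $C_x^{|\alpha|} \le C^{|\alpha|+1}$ for every admissible $\alpha$. Since $C := \max\{1,C_x\}$, we have simultaneously $C \ge C_x$ and $C \ge 1$, hence $C^{|\alpha|+1} = C^{|\alpha|}\cdot C \ge C_x^{|\alpha|}\cdot 1 = C_x^{|\alpha|}$, which closes the termwise estimate. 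Summing the resulting inequalities over all terms (all coefficients are non-negative, so no cancellation obstructs the comparison) yields the claimed inequality.

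There is essentially no substantive obstacle: the argument is bookkeeping built on two facts, namely that the Faà di Bruno polynomial has non-negative coefficients and is one-homogeneous in the first batch of variables, and that the power of $C_x$ accumulated in each monomial exactly matches $|\alpha|$. The only point to be careful about is the edge case $|\alpha|=0$, where $C_x^{|\alpha|} = 1 \le C = C^{|\alpha|+1}$ still holds precisely because we defined $C$ to dominate $1$; this is the reason for the $\max$ in the definition of $C$.
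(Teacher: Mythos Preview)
Your proof is correct and follows essentially the same approach as the paper: expand $p_k^{(n)}$ via the explicit Faà di Bruno formula, use that the total exponent on the $x$-variables in each monomial equals $|\alpha|$, bound $y_\alpha$ by \eqref{eq:lemmaUpperBound}, and conclude with $C_x^{|\alpha|} \le C^{|\alpha|+1}$. Your treatment is slightly more detailed (explicitly writing out both sides and isolating the $|\alpha|=0$ case), but the argument is the same.
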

Note that as soon as $y_\alpha = \abs{\partial^\alpha g}$ for some function $g$ which is analytic away from the origin, the upper bound for $y_\alpha$ in \eqref{eq:lemmaUpperBound} automatically holds, see the proof of \cite[Theorem 7.2]{BlattVorderobermeier:2019:CriticalMoebius}.
\begin{proof}
	Using the explicit form, we may compute
	\[
	\begin{split}
	&p_k^{(n)}(\{y_\alpha\}_{\abs{\alpha}\le k}, \{C_x x_i^{(j)}\}_{\substack{i = 1, \ldots, k\\j =1, \ldots, n}})\\
	=&\sum_0 \sum_1 \cdots \sum_k  \frac{k!}{\prod_{i=1}^k (i!)^{r_i} \prod_{i=1}^k\prod_{j=1}^nq_{ij}! }  y_\alpha \prod_{i=1}^k \prod_{j=1}^n (C_x x_j^{(i)})^{q_{ij}}\\
	\le& C_y \sum_0 \sum_1 \cdots \sum_k  \frac{k!}{\prod_{i=1}^k (i!)^{r_i} \prod_{i=1}^k\prod_{j=1}^nq_{ij}!} \frac {(\abs{\alpha} +1)!} {r^{\abs{\alpha} +1}} C_x^{\abs{\alpha}} \prod_{i=1}^k \prod_{j=1}^n (x_j^{(i)})^{q_{ij}},
	\end{split}
	\]
	as the sum of all $q_{ij}$ is exactly $\abs{\alpha}$.
	Estimating $C_x^{\abs{\alpha}} \le C^{\abs{\alpha} +1}$ yields the desired result.
\end{proof}

The explicit formula also allows us to estimate the norm of a Faà di Bruno polynomial in a Banach algebra.
\begin{lemma}
	\label{lemma:estimateFaaDiBrunoBanachAlgebra}
	Let $\XL$ be a Banach algebra with norm $\norm{\cdot}$ in the sense that there is a universal constant $b$ such that for $x,y \in \XL$, $\norm{xy} \le b \norm{x}\norm{y}$.    
	
	Furthermore, let $k \in \N$ and $x_j^{(i)},y_\alpha \in \XL$ for all $\alpha \in \N_0^n$ with $\abs{\alpha} \le k$, $i \in \{1,\ldots,k\}$ and $j \in \{1,\ldots,n\}$.    
	Then,
	\[
	\norm*{p_k^{(n)}(\{y_\alpha\}_{\abs{\alpha}\le k}, \{x_j^{(i)}\}_{\substack{i = 1, \ldots, k\\j =1, \ldots, n}})}
	\le p_k^{(n)}(\{b^{\abs{\alpha}}\norm{y_\alpha}\}_{\abs{\alpha}\le k}, \{\norm{x_j^{(i)}}\}_{\substack{i = 1, \ldots, k\\j =1, \ldots, n}}.
	\]
\end{lemma}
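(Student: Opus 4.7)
The plan is to use the explicit multinomial expansion of $p_k^{(n)}$ provided by Faà di Bruno's formula, combined with two elementary observations: the triangle inequality (using that the coefficients of $p_k^{(n)}$ are non-negative) and iterated application of the submultiplicativity $\norm{xy}\le b\norm{x}\norm{y}$.

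More concretely, every monomial of $p_k^{(n)}(\{y_\alpha\},\{x_j^{(i)}\})$ has the form
\[
    \frac{k!}{\prod_{i=1}^k (i!)^{r_i}\prod_{i=1}^k\prod_{j=1}^n q_{ij}!}\, y_\alpha \prod_{i=1}^k\prod_{j=1}^n (x_j^{(i)})^{q_{ij}},
\]
where the exponents satisfy $\sum_{i,j} q_{ij}=\abs{\alpha}$. This monomial is a product of $\abs{\alpha}+1$ elements of $\XL$ (one $y_\alpha$ and $\abs{\alpha}$ many factors from the $x$'s). Iterating the Banach algebra inequality on such a product introduces $\abs{\alpha}$ copies of the constant $b$, giving
\[
    \norm*{y_\alpha \prod_{i=1}^k\prod_{j=1}^n (x_j^{(i)})^{q_{ij}}}
    \le b^{\abs{\alpha}} \norm{y_\alpha} \prod_{i=1}^k\prod_{j=1}^n \norm{x_j^{(i)}}^{q_{ij}}.
\]
The factor $b^{\abs{\alpha}}$ can then be absorbed into the $y_\alpha$ slot, yielding precisely the monomial
\[
    \frac{k!}{\prod_{i=1}^k (i!)^{r_i}\prod_{i=1}^k\prod_{j=1}^n q_{ij}!}\, \bigl(b^{\abs{\alpha}}\norm{y_\alpha}\bigr) \prod_{i=1}^k\prod_{j=1}^n \norm{x_j^{(i)}}^{q_{ij}},
\]
which is the corresponding monomial in $p_k^{(n)}(\{b^{\abs{\alpha}}\norm{y_\alpha}\},\{\norm{x_j^{(i)}}\})$. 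Summing over all index tuples and applying the triangle inequality to the overall sum (legitimate because all multinomial coefficients are non-negative) gives the claimed estimate.

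There is no real obstacle here; the only point requiring a tiny bit of care is counting the number of $b$-factors: a product of $m$ Banach algebra elements requires $m-1$ applications of submultiplicativity, so a product of $\abs{\alpha}+1$ factors produces $b^{\abs{\alpha}}$, matching exactly the exponent in the statement. I would write the proof as a single short display chain starting with the explicit Faà di Bruno expansion, followed by one inequality line, and a final rewriting as the target polynomial.
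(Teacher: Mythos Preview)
Your proposal is correct and matches the paper's own argument essentially line for line: both use the explicit Fa\`a di Bruno expansion, the iterated Banach algebra inequality $\norm{\prod_{l=1}^m z_l}\le b^{m-1}\prod_{l=1}^m\norm{z_l}$, and the count $\sum_{i,j}q_{ij}=\abs{\alpha}$ to place the factor $b^{\abs{\alpha}}$ in the first (one-homogeneous) slot.
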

Note that in our application scenario, the product is just pointwise multiplication between functions mapping to $\R$.
\begin{proof}
	The statement follows from the estimate $\norm{\prod_{l=1}^m z_l} \le b^{m-1} \prod_{l=1}^m \norm{z_l}$ for all $z_l \in \XL$ and the fact that the $q_{ij}$ sum up to $\abs{\alpha}$ together with the explicit formula
    of Faa di Bruno's polynomial in Lemma \ref{lem:FaadiBruno}, in particular the one-homogeneity of its first entry.
\end{proof}

\section{An inequality for Sobolev Norms}

\begin{lemma}
    \label{lemma:equivalentSobolevNormsDerivative}
    Let $m>1$ and $f \in H^m(\R/\Z,\R^n)$ with $\int_0^1 f(x) \d x =0$.
    Then,
    \[
        \frac 1 {2\pi} \norm[H^{m-1}]{f'}
        \le \norm[H^m]{f}
        \le \frac 1 {\sqrt{2} \pi} \norm[H^{m-1}]{f'}
        \le \norm[H^{m-1}]{f'}.
    \]
\end{lemma}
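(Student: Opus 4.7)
The plan is to reduce everything to a termwise inequality on Fourier coefficients via Parseval's identity. Since $\int_0^1 f(x) \d x = 0$, we have $\hat f(0) = 0$, so the sums defining both $H^m$-norms involved run effectively over $k \in \Z \setminus \{0\}$. Using the elementary identity $\widehat{f'}(k) = 2\pi i k \hat f(k)$, I would write
\[
\norm[H^m]{f}^2 = \sum_{k \ne 0}(1+k^2)^m \abs{\hat f(k)}^2, \qquad \norm[H^{m-1}]{f'}^2 = 4\pi^2 \sum_{k \ne 0} k^2 (1+k^2)^{m-1} \abs{\hat f(k)}^2.
\]

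After dividing by the common positive weight $(1+k^2)^{m-1} \abs{\hat f(k)}^2$, the desired chain of inequalities
\[
\tfrac 1 {4\pi^2} \norm[H^{m-1}]{f'}^2 \le \norm[H^m]{f}^2 \le \tfrac 1 {2\pi^2} \norm[H^{m-1}]{f'}^2
\]
reduces, termwise, to showing $k^2 \le 1 + k^2 \le 2k^2$ for all $k \in \Z \setminus \{0\}$. The left inequality is immediate, and the right follows from $k^2 \ge 1$ for nonzero integers. The very last step $\tfrac 1 {\sqrt 2 \pi} \le 1$ is trivial.

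There is no real obstacle here; the only thing to check carefully is that $m > 1$ guarantees $m - 1 > 0$ so that $H^{m-1}$ and its Fourier characterization are defined as used in the excerpt, and that the constant $2\pi$ (as opposed to some other normalization) correctly matches the Fourier convention $\hat f(k) = \int_0^1 e^{-2\pi i k x} f(x) \d x$ fixed in the notation section.
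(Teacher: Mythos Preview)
Your proposal is correct and follows essentially the same argument as the paper: both proofs use the Fourier characterization of the $H^s$-norms, the mean-zero condition to drop the $k=0$ term, the identity $\widehat{f'}(k)=2\pi i k\,\hat f(k)$, and the elementary inequality $k^2 \le 1+k^2 \le 2k^2$ for nonzero integers $k$.
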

\begin{proof}
    By \cite[Proposition~3.1.2~(10)]{Grafakos:2014:ClassicalFourierAnalysis}, we have $\widehat{f'}(k) = 2\pi i k \hat f(k)$ for all $k \in \Z$ and thus for $k \ne 0$,
    \[
        \abs{\hat f(k)}^2 = \frac 1 {(2 \pi k)^2} \abs{\widehat{f'}(k)}^2.
    \]
    Note that
    \[
        \hat f(0) = \int_0^1 f(x)e^{-2\pi \cdot 0 \cdot x} \d x =\int_0^1 f(x) \d x =0
    \]
    with the same holding for $\widehat{f'}(0)$ as $f$ is periodic and continuous.
    Keeping the estimates $k^2 \le 1+k^2 \le 2 k^2$ in mind, we obtain
    \begin{align*}
        \frac 1 {(2\pi)^2} \norm[H^{m-1}]{f'}^2
        &= \sum_{k\in \Z \setminus \{0\}} \frac 1 {(2\pi)^2} (1+k^2)^{m-1} \abs{\widehat{f'}(k)}^2
        = \sum_{k\in \Z \setminus \{0\}} (1+k^2)^{m-1}  k^2 \abs{\hat f(k)}^2\\
        &\le \sum_{k\in \Z \setminus \{0\}} (1+k^2)^m \abs{\hat f(k)}^2
        = \norm[H^m]{f}^2\\
        &= \sum_{k\in \Z \setminus \{0\}} (1+k^2)^m \frac 1 {(2 \pi k)^2} \abs{\widehat{f'}(k)}^2
        \le \frac 2 {(2\pi)^2}  \sum_{k\in \Z \setminus \{0\}} (1+k^2)^{m-1} \abs{\widehat{f'}(k)}^2\\
        &= \frac 1 {2\pi^2} \norm[H^{m-1}]{f'}^2.
    \end{align*}
\end{proof}

    \bibliographystyle{alpha}
    \bibliography{Analyticity-Menger}
\end{document}